\def\thanks#1{\protected@xdef\@thanks{\@thanks\protect\footnotetext{#1}}}
\newlist{casesp}{enumerate}{1}
\setlist[casesp]{align=left, 
                 listparindent=\parindent, 
                 parsep=\parskip, 
                 font=\normalfont\it, 
                 leftmargin=10pt, 
                 labelwidth=10pt, 
                 itemindent=.4em,
                 itemsep = 3\itemsep,
                 labelsep=.4em,
                 partopsep=0pt, 
                 }
\setlist[casesp,1]{label=Case:,ref=\arabic*}
\definecolor{darkgreen}{rgb}{0,0.45,0}
\newtheorem{theorem}{Theorem}[section]
\newtheorem{proposition}[theorem]{Proposition}
\newtheorem{lemma}[theorem]{Lemma}
\newtheorem{corollary}[theorem]{Corollary}
\theoremstyle{definition}
\newtheorem{definition}[theorem]{Definition}
\newtheorem{remark}[theorem]{Remark}
\let\oldequiv\equiv%
\renewcommand{\equiv}{\simeq}
\newcommand{\defeq}{\oldequiv}
\newcommand{\ndefeq}{\not\defeq}
\newcommand{\rulen}[1]{\textsc{#1}}
\newcommand{\yields}{\vdash}
\newcommand{\judge}{\mathcal{J}}
\newcommand{\ctx}{\,\,\mathsf{ctx}}
\newcommand{\type}{\,\,\mathsf{type}}
\newcommand{\tele}{\,\,\mathsf{tele}}
\newcommand{\rawterm}{\,\,\mathsf{rawterm}}
\newcommand{\Idsym}{\mathsf{Id}}
\newcommand*{\univ}{\mathcal{U}}
\newcommand*{\NN}{\mathbb{N}}
\newcommand*{\RR}{\mathbb{R}}
\newcommand*{\id}{\mathsf{id}}
\newcommand*{\proj}{\mathsf{pr}}
\newcommand*{\inl}{\mathsf{inl}}
\newcommand*{\inr}{\mathsf{inr}}
\newcommand*{\refl}[1]{\mathsf{refl}_{#1}}
\def\smsym{\sum}
\newcommand{\@thesum}[1]{\smsym_{(#1)}}
\newcommand{\sm}[1]{\@ifnextchar\bgroup{\@sm{#1}\sm}{\@sm{#1}}}
\newcommand{\@sm}[1]{\mathchoice{{\textstyle\@thesum{#1}}}{\@thesum{#1}}{\@thesum{#1}}{\@thesum{#1}}}
\def\prdsym{\prod}
\newcommand{\@theprd}[1]{\prdsym_{(#1)}}
\newcommand{\prd}[1]{\@ifnextchar\bgroup{\@prd{#1}\prd}{\@prd{#1}}}
\newcommand{\@prd}[1]{\mathchoice{{\textstyle\@theprd{#1}}}{\@theprd{#1}}{\@theprd{#1}}{\@theprd{#1}}}
\newcommand*{\lolli}{\multimap}
\newcommand{\lock}{\mathchoice {\scalebox{0.8}{\text{\faLock}}}
  {\scalebox{0.8}{\text{\faLock}}} {\scalebox{0.5}{\text{\faLock}}}
  {\scalebox{0.4}{\text{\faLock}}} }
\newcommand{\key}{\mathchoice
  {\scalebox{0.8}{\text{\faKey}}} {\scalebox{0.8}{\text{\faKey}}}
  {\scalebox{0.5}{\text{\faKey}}} {\scalebox{0.4}{\text{\faKey}}} }
\newcommand{\rbindsym}{\raisebox{-0.5pt}{\scalerel*{\reflectbox{\rotatebox[origin=c]{185}{$\lambda$}}}{\lambda}}}
\newcommand{\rbind}[1]{\rbindsym{} #1.}
\newcommand{\Tiny}{\mathbb{T}}
\newcommand{\lockn}[1]{\mathcal{#1}}
\newcommand{\locksin}[1]{\mathsf{locks}(#1)}
\newcommand{\ctxlocke}[1]{\lock_{#1}}
\newcommand{\ctxlock}[1]{\ctxlocke{\lockn{#1}}}
\newcommand{\stubra}[1]{\llbracket #1 \rrbracket}
\newcommand{\admbra}[1]{[ #1 ]}
\newcommand{\subkeyoe}[2]{#1/\key_{#2}}
\newcommand{\subkeyo}[2]{\subkeyoe{#1}{\lockn{#2}}}
\newcommand{\subkey}[2]{\admbra{\subkeyo{#1}{#2}}}
\newcommand{\subkeyeann}[5]{\admbra{\key(#3; #1/#2; #4; #5)}}
\newcommand{\subkeyann}[5]{\subkeyeann{#1}{\lockn{#2}}{#3}{#4}{#5}}
\newcommand{\substucke}[2]{\stubra{#1/\key_{#2}}}
\newcommand{\substuck}[2]{\substucke{#1}{\lockn{#2}}}
\newcommand{\sublock}[2]{\admbra{\rbindsym{} #1. /\lock_{\lockn{#2}} }}
\newcommand{\unit}[3]{\mathsf{unit}_{#1, #2, \lockn{#3}}}
\newcommand{\counit}[3]{\mathsf{counit}_{#1, \lockn{#2}, #3}}
\newcommand{\extract}{\mathsf{e}}
\newcommand{\rformsym}{\surd}
\newcommand{\rformslicesym}[1]{\surd_{\mkern-6mu/#1}}
\newcommand{\rforme}[2]{\ThisStyle{\raisebox{0.07em}{$\SavedStyle \rformsym_{\hspace{-0.25em}#1}$}} #2}
\newcommand{\rform}[2]{\rforme{\lockn{#1}}{#2}}
\newcommand{\rformu}[1]{\ThisStyle{\raisebox{0.07em}{$\SavedStyle \rformsym$}} #1}
\newcommand{\rintroe}[2]{\lock_{#1}. #2}
\newcommand{\rintro}[2]{\rintroe{\lockn{#1}}{#2}}
\newcommand{\relim}[1]{\rbindsym #1}
\newcommand{\rdepform}[2]{{#2}^{1/#1}}
\newcommand{\rget}[1]{{#1}_{\downharpoonleft}}
\newcommand{\PSh}{\mathsf{PSh}}
\newcommand{\op}{\mathsf{op}}
\newcommand{\yo}{\mathsf{y}}
\newcommand{\Set}{\mathsf{Set}}
\newcommand{\C}{\mathcal{C}}
\newcommand{\univfib}{\mathsf{U}}
\newcommand{\Int}{\mathbb{I}}
\DeclareMathOperator{\isFib}{\mathsf{isFib}}
\DeclareMathOperator{\Fib}{\mathsf{Fib}}
\DeclareMathOperator{\isNewFib}{\mathsf{isNewFib}}
\DeclareMathOperator{\NewFib}{\mathsf{NewFib}}
\newcommand{\Compstr}{\mathsf{C}}
\title{A Type Theory with a Tiny Object}
\author{Mitchell Riley\thanks{The author is grateful for the support of Tamkeen under the
    NYU Abu Dhabi Research Institute grant CG008.}}
\affil{\small New York University Abu Dhabi \\ \href{mailto:mitchell.v.riley@nyu.edu}{mitchell.v.riley@nyu.edu}}
\date{\today}
\begin{document}
\maketitle

\begin{abstract}
  We present an extension of Martin-Löf Type Theory that contains a
  tiny object; a type for which there is a \emph{right} adjoint to the
  formation of function types as well as the expected left adjoint.
  We demonstrate the practicality of this type theory by proving
  various properties related to tininess internally and suggest a few
  potential applications.
\end{abstract}

\tableofcontents

\section{Introduction}

Tiny objects are central to Lawvere's account of differential forms in
synthetic differential geometry~\cite{lawvere:towards}; a tiny object $\Tiny$ in a
category $\C$ is one with the amazing property that the internal hom
functor ${(\Tiny \to -) : \C \to \C}$ has a right adjoint
${\rformu - : \C \to \C}$. In synthetic differential geometry, the
\emph{infinitesimal interval} defined by
${D :\defeq \{ x : \mathbb{R} \mid x^2 = 0 \}}$ is one such object. This
object corepresents tangent spaces so that $D \to X$ is the tangent
space of $X$, and elements of $(D \to X) \to \RR$ are therefore
(not-necessarily linear) 1-forms on $X$. By tininess, such a form
corresponds to an element of $X \to \rformu \RR$, that is, 1-forms are
simply functions on $X$ valued in a highly non-standard space.
Additional properties like linearity, closedness, etc.\ may be imposed
by carving out an appropriate subobject of $\rformu \RR$. The notion
of tininess was anticipated by~\cite{kock-reyes:manifolds,
  lawvere:categorical-dynamics} but its utility was first made
explicit in~\cite[\S 3]{lawvere:towards}, and it appears in much of
the ensuing work on synthetic differential
geometry~\cite{lawvere:laws-of-motion, lawvere:outline-sdg,
  lawvere:microphysics, lawvere:adjoints, lawvere:euler-vindicated,
  kock-reyes:differential-equations, kock-reyes:fractional, kock:sdg}.

Tininess is simultaneously unusual and abundant. Unusual because, in
the category of sets, only the singletons are tiny. This is easy to
check: $(\Tiny \to -)$ can only preserve coproducts when $\Tiny \cong 1$.
And abundant because, for any category $\C$ with finite products, the
representable presheaves on $\C$ are all tiny. An explicit formula for
the right adjoint to $(\Tiny \to -)$ with $\Tiny$ representable is
$\rformu{Y}(c) :\defeq \PSh(\C)(\Tiny \to \yo c, Y)$, as may be
verified by a little (co)end calculus. In favourable cases
representable sheaves are also tiny~\cite[Appendix
4]{moerdijk-reyes:book}, but this is less common.


In this paper we tackle a challenge set by Lawvere, to produce a
formal system for working with tiny objects: ``This possibility does
not seem to have been contemplated by combinatory logic; the formalism
should be extended to enable treatment of so basic a
situation''~\cite[Section 3]{lawvere:adjoints}. We describe an
extension of Martin-Löf Type Theory that makes a fixed type $\Tiny$
tiny by introducing a type former $\rformu$ for the amazing right
adjoint to $(\Tiny \to -)$. The situation is a little subtle. Freyd
proves that for any tiny object $\Tiny$ in a topos, the pullback of
$\Tiny$ to any slice is also tiny~\cite[Theorem 1.4]{yetter:tiny}. But
the right adjoint witnessing this tininess is \emph{not} stable under
base-change. We therefore have to add to the context structure of type
theory in order to support this operation.

We add $\rformu$ as a `Fitch style'
modality~\cite{clouston:fitch-style, drats}, where the type former
$\rformu$ is made right adjoint to an operation on contexts. Such
modalities are particularly nice when they are a \emph{double}
right adjoint, and \textsf{FitchTT}~\cite{fitchtt} is designed for adding
modalities of this kind to MLTT\@. We are in such a situation, of
course, because
\[{(- \times \Tiny) \dashv (\Tiny \to -) \dashv \rformu -},\] and so
we could use~\cite{fitchtt} directly to produce a type theory.

But there is a special feature of $\rformu$ which impels us to create
a specialised theory for it. Specifically, the leftmost adjoint
$(- \times \Tiny)$ already exists as an operation on contexts: it is
simply context extension with $i : \Tiny$. We allow $\Tiny$ to be an
\emph{ordinary type}, rather than a pre-type or special piece of
syntax. This flexibility is essential for applications in synthetic
differential geometry where many important tiny objects may be
constructed internally as ordinary types. The only new context former
needed is something corresponding to $(\Tiny \to -)$, which we write
as context extension with a $\ctxlock{}$.

To make $(-,i:\Tiny) \dashv (-,\lock)$ on contexts, we need unit
$\Gamma \to (\Gamma,i : \Tiny, \lock)$ and counit
$(\Gamma,\lock,i : \Tiny) \to \Gamma$ substitutions. In~\cite{fitchtt}
these are added axiomatically, and the type theory is presented in a
variable-free CwF style where these explicit substitutions are pushed
around manually. The downside of the approach is that figuring out how
to use a variable could be potentially challenging: one may have to
devise by hand an explicit substitution that extracts the variable
from the context. One of our aims is to give a fully explicit variable
rule which builds in the normal forms of these `stuck substitutions'.

The reason this gets interesting is that, because $\Tiny$ is an
ordinary type, we can substitute \emph{any} term $t : \Tiny$ for $i$
in the counit substitution. And so, the admissible counit rule is
parameterised by a genuine term $t : \Tiny$. The counit rule commutes
with all type and term formers and becomes stuck on the ordinary
variable rule, so every use of a variable in this theory will have
(possibly many) attached terms of $\Tiny$ corresponding to these stuck
counits.

\paragraph{Related Work.} 
In~\cite{lops}, a tiny interval $\mathbb{I}$ is used to construct,
internal to a 1-topos, a universe that classifies fibrations. This is
performed in `crisp type theory', a fragment of Shulman's spatial type
theory~\cite{mike:real-cohesive-hott}. The $\rformu$ type former is
described by a collection of axioms, and the fact that the adjunction
is external is enforced by requiring the inputs to these axioms to be
`crisp', roughly, protected by a use of the global sections/discrete
inclusion modality $\flat$. Internally, this manifests as an
equivalence
${\flat(A \to \rformu B) \equiv \flat((\Tiny \to A) \to B)}$. An
alternative axiomatisation that is coherent for higher types is given
in~\cite[Appendix A]{david:microlinear}, by instead asserting for each
crisp type $B$ a counit map $(\Tiny \to \rformu B) \to B$ such that
the induced map
${\flat(A \to \rformu B) \to \flat((\Tiny \to A) \to B)}$ is an
equivalence.

These equivalences guarded by $\flat$ are more restrictive than
necessary: we will prove an equivalence
${(A \to \rformu B) \equiv \rformu ((\Tiny \to A) \to B)}$, where $A$
and $B$ do not have to be `global types' (but the dependency of $B$ is
still somewhat restricted).

The most comparable type theory to ours is presented
in~\cite{transpension}. The authors give rules for a right adjoint to
the \emph{dependent product}
$\Pi_\Tiny : \mathcal{E}/\Tiny \to \mathcal{E}$ rather than the
non-dependent function type, an operation which they call
\emph{transpension}. It is a theorem of Freyd~\cite[Proposition
1.2]{yetter:tiny} that this is equivalent to the apparently weaker
notion of tininess. Because we target the non-dependent function type,
our new judgemental structures and rules are much simpler than those
for transpension, and let us maintain admissibility of substitution
and (conjecturally) normalisation; for transpension it is unclear
whether these are achievable.

The context extension with the tiny type is also treated specially,
similar to accounts of internal
parametricity~\cite{cavallo-harper:parametricity-for-ctt,
  cavallo:thesis}~\cite[Section 5]{fitchtt} where the
$\Gamma, i : \Tiny$ context extension is a special piece of syntax,
and so only special `terms' may be substituted for it.

A benefit of these previous systems is that the precise notion of
tininess is easier to tweak. For example, whenever $\C$ is monoidal,
representable presheaves on $\C$ are \emph{monoidally tiny} in that
$(\Tiny \lolli -)$ has a right adjoint, where $\lolli$ denotes the Day
internal hom. This non-cartesian tininess is central to forthcoming
work on Higher Observational Type
Theory~\cite{mike:observational-day3}, whose intended semantics are in
a non-cartesian cube category.

\paragraph{Contributions.}

\begin{itemize}
\item In~\cref{sec:contexts}, we introduce the new context structure
  required to support the type former and in~\cref{sec:type} the
  type-former itself.
\item In~\cref{sec:constructions}, we give a couple of useful
  constructions that are definable internally. \cref{sec:induction}
  shows that we can derive induction principles for functions out of
  $\Tiny$, and \cref{sec:transpension} recovers the transpension
  operation from our non-dependent right adjoint, providing a new
  construction that applies in the univalent setting.
\item In~\cref{sec:applications}, we discuss some potential
  applications of the type theory and a prototype type-checker
  utilising an extension of the usual normalisation by evaluation
  algorithm.
\end{itemize}

\paragraph{Terminology.} There is a constellation of terminology
around tininess and similar notions, with ``amazingly tiny'',
``atomic'', ``infinitesimal'', ``satisfying the ATOM property'',
``internally/externally projective'' and ``small-projective'' all used
to refer to one of $\C(\Tiny, -) : \C \to \Set$ or
$(\Tiny \to -) : \C \to \C$ preserving epimorphisms, finite colimits,
colimits or having a right adjoint. With additional assumptions many
of these notions coincide, but we follow~\cite{yetter:tiny} in
settling on simply ``tiny'' as the name for objects whose internal hom
has a right adjoint. The other properties play no role in this work.
There are also various notations for the right adjoint, including
$(-)_\Tiny$~\cite{yetter:tiny} and
$(-)^{1/\Tiny}$~\cite{lawvere:laws-of-motion}, and even
$\nabla_\Tiny$~\cite{yetter:tiny} and ${\between}-$ for
transpension~\cite{transpension}. We follow~\cite{lops} in using
$\rformu -$.

\paragraph{Acknowledgements.}

Thank you to David Jaz Myers, Andreas Nuyts and Jon Sterling for
helpful comments and suggestions. The author is grateful for the
support of Tamkeen under the NYU Abu Dhabi Research Institute grant
CG008.


\section{Contexts and Variables}\label{sec:contexts}

We take as our starting point the bare judgements of Martin-Löf Type
Theory. When using our theory we will also assume dependent sums,
products, intensional identity types and an infinite hierarchy of
universes, but the extension of type theory with a tiny object does
not require the presence of any other type-formers.

The $\rformu$ type-former will be added by making it right adjoint to
a judgemental version of ``$\Tiny \to \Gamma$'' which we write as
$\Gamma, \ctxlock{L}$. Here, to refer to it later, the lock is
annotated with a `lock name' $\lockn{L}$. This first section will be
spent adding the necessary rules for the context $\Gamma, \ctxlock{L}$
to behave like functions into $\Gamma$.

A simple way to achieve this would be to assert axiomatic unit and
counit substitutions for the $(- \times \Tiny) \dashv (\Tiny \to -)$
adjunction as in the following:
\begin{mathpar}
  \inferrule*[left=unit-sub]
  {\Gamma \ctx}
  {\Gamma \yields \unit{\Gamma}{i}{L} : \Gamma, i : \Tiny, \ctxlock{L}}
  \and
  \inferrule*[left=counit-sub]
  {\Gamma \ctx}
  {\Gamma, \ctxlock{L}, i : \Tiny \yields \counit{\Gamma}{L}{i} : \Gamma}
\end{mathpar}
together with equations that explain how these explicit substitutions
are pushed around and annihilated.

We can do better, however, and give normal forms for the placement of
these explicit substitutions. The examples in later sections show that
it is quite feasible to work in the resulting type theory by hand.

Besides the context lock, we only need one additional base rule: a
modified version of the variable rule that builds in stuck instances
of the counit substitution. The rule is completely structural and
independent of the rules for types (besides the existence of $\Tiny$).
There are two new admissible rules, corresponding to precomposition
with the counit or unit substitutions.

We now describe the additions to MLTT one rule at a time.

\paragraph{The Tiny Type.} There is a closed type $\Tiny$.
\begin{mathpar}
  \inferrule*[left=tiny-form]{~}{\Gamma \yields \Tiny \type}
\end{mathpar}
In applications, this will typically be an already-existing closed
type rather than a new one asserted axiomatically.

\paragraph{Context Locks.} There is a special context extension,
\begin{mathpar}
  \inferrule*[left=ctx-lock]{\Gamma \ctx}{\Gamma, \ctxlock{L} \ctx}
  \and
\end{mathpar}
to be thought of as $\Tiny \to \Gamma$. We call $\lockn{L}$ a `lock
name', all lock names in a context are unique.

\paragraph{The Counit.} Because a locked context represents functions
into what comes before the lock, we can use variables to the left of
a lock if we can provide an argument to the function: this
corresponds to precomposition with the counit substitution, with a
substitution for $\Tiny$ and some contractions built-in.

The simplest situation we can encounter is
\begin{mathpar}
  \inferrule*[left=counit?,fraction={-{\,-\,}-}]
  {\Gamma \yields a : A \and \Gamma, \ctxlock{L}, \Gamma' \yields t : \Tiny \and \lock \notin \Gamma'}
  {\Gamma, \ctxlock{L}, \Gamma' \yields a\subkey{t}{L} : A\subkey{t}{L}}
\end{mathpar}
corresponding (non-dependently) to the composite
\begin{align*}
  (\Tiny \to \Gamma) \times \Gamma'
  \xrightarrow{[\id, t]} (\Tiny \to \Gamma) \times \Gamma' \times
  \Tiny
  \xrightarrow{\proj} (\Tiny \to \Gamma) \times \Tiny
  \xrightarrow{\varepsilon} \Gamma
  \xrightarrow{a} A
\end{align*}

The new piece of term syntax is a stuck instance of this
$\subkey{t}{L}$ rule which we build into the variable rule below.
To distinguish the admissible rule from the stuck rule, we will
write the admissible rule as $\subkey{t}{L}$ and the actual stuck
syntax as $\substuck{t}{L}$. Roughly, the admissible $\subkey{t}{L}$
will add a $\substuck{t}{L}$ to every \emph{free} variable usage in
$a$, much like the underlining operation in the type theory for
$\natural$~\cite{rfl:spec-note}.

We need to generalise this rule in a couple of ways. First, we need
to allow an additional telescope to be carried along, so that we may
go under binders in the term $a : A$.

Additionally, we may need to apply the counit to multiple locks
simultaneously. To facilitate this, for a context
$\Gamma, \Gamma' \ctx$ let $\locksin{\Gamma'}$ denote the (ordered)
list of context locks
$\ctxlocke{\lockn{L}_1}, \dots, \ctxlocke{\lockn{L}_n}$ that appear
in the telescope $\Gamma'$.

The generalised rule is then:
\begin{mathpar}
  \inferrule*[left=counit-tele,fraction={-{\,-\,}-}]
  {\Gamma \yields \Gamma'' \tele \and \Gamma, \Gamma' \yields t_i : \Tiny
    \text{ for } \lockn{L}_i \in \locksin{\Gamma'} }
  {\Gamma, \Gamma' \yields \Gamma'' \admbra{\subkeyoe{t_1}{\lockn{L}_1}, \dots, \subkeyoe{t_n}{\lockn{L}_n}} \tele} \and
  \inferrule*[left=counit,fraction={-{\,-\,}-}]
  {\Gamma, \Gamma'' \yields a : A \and \Gamma, \Gamma' \yields t_i : \Tiny
    \text{ for } \lockn{L}_i \in \locksin{\Gamma'} }
  {\Gamma, \Gamma', \Gamma'' \admbra{\subkeyoe{t_1}{\lockn{L}_1}, \dots, \subkeyoe{t_n}{\lockn{L}_n}} \yields a \admbra{\subkeyoe{t_1}{\lockn{L}_1}, \dots, \subkeyoe{t_n}{\lockn{L}_n}} : A \admbra{\subkeyoe{t_1}{\lockn{L}_1}, \dots, \subkeyoe{t_n}{\lockn{L}_n}} }
\end{mathpar}

Because we will so often be working with a sequence of terms and
locks, we will abbreviate
$\admbra{\subkeyoe{t_1}{\lockn{L}_1}, \dots,
  \subkeyoe{t_n}{\lockn{L}_n}}$ to simply
$\subkey{\vec{t}}{L}$.

Formally, the telescope $\Gamma''$ should be annotated in the syntax
of the operation, as it is not inferable from the raw syntax.

\paragraph{Variable Usage.} To use a variable $x : A$ that is
behind some context locks, we must provide a term $t_i : \Tiny$
for each lock between that variable and the front of the context.

\begin{mathpar}
  \inferrule*[left=var]
  {\Gamma, x : A, \Gamma' \yields \vec{t} : \Tiny
    \text{ for } \lockn{L} \in \locksin{\Gamma'} }
  {\Gamma, x : A, \Gamma' \yields x \substuck{\vec{t}}{L} : A\subkey{\vec{t}}{L}}
\end{mathpar}

The type of the variable usage has the admissible counit rule
applied to it: typically these counits will not be stuck on the type
unless $A$ is itself a variable.

\paragraph{Substitution.} Ordinary substitution into a variable
continues into the keys associated with the variable. When we find
the variable we are substituting for, the `stuck' counits are turned
back into admissible ones, which then proceed into the substituted
term:
\begin{align*}
  \Gamma, \Gamma'[a/x] 
  &\yields (y \substuck{\vec{t}}{L})[a/x] :\defeq y\substuck{\vec{t}[a/x]}{L} \\
  \Gamma, \Gamma'[a/x]
  &\yields (x \substuck{\vec{t}}{L})[a/x] :\defeq a\subkey{\vec{t}[a/x]}{L}
\end{align*}


\paragraph{Computing the Counit.} We will now explain how the
counit operation is actually computed on syntax. We begin with the
simplest possible instance: the addition a single additional key with using a
closed term of $\Tiny$:
\begin{mathpar}
  \inferrule*[left=counit,fraction={-{\,-\,}-}]
  {\Gamma, \Gamma'' \yields b : B \and \cdot \yields t : \Tiny}
  {\Gamma, \ctxlock{L}, \Gamma''\subkey{t}{L}\yields b\subkey{t}{L} : B\subkey{t}{L}}
\end{mathpar}
This is calculated by induction on $b$ until we reach an instance of
the variable rule, say for a variable $x : A$.

As in the definition of single-variable substitution, there are
cases depending on where in the context $x$ lies. The typical case
is when $x : A$ is in $\Gamma$, so the variable usage looks like
\begin{align*}
  \Gamma_1, x : A, \Gamma_2, \Gamma''
  &\yields x \stubra{\subkeyo{\vec{j}}{J}, \subkeyo{\vec{k}}{K}} : A \admbra{\subkeyo{\vec{j}}{J}, \subkeyo{\vec{k}}{K}}
\end{align*}  
where $\lockn{J}$ are the locks in $\Gamma_2$
and $\lockn{K}$ are the locks in $\Gamma''$. The new $\subkey{t}{L}$ is slot into place, and left stuck:
\begin{align*}
  \Gamma_1, x : A, \Gamma_2, \ctxlock{L}, \Gamma''\subkey{t}{L} \yields
  &x \stubra{\subkeyo{\vec{j}}{J}, \subkeyo{\vec{k}}{K}} \subkey{t}{L}
    :\defeq x \stubra{\subkeyo{\vec{j}}{J}, \subkeyo{t}{L}, \subkeyo{\vec{k}}{K}}
\end{align*}

When working informally, this means counting how many
$\ctxlocke{\lockn{K}}$ are created between the application of
$\subkey{t}{L}$ and the variable usage (by the use of some of the
typing rules to be introduced later), and placing $\subkeyo{t}{L}$
to the left of all these freshly created $\ctxlocke{\lockn{K}_i}$.

If instead $x : A$ is in $\Gamma''$, the variable usage looks like
\begin{align*}
  \Gamma, \Gamma_1'', x : A, \Gamma_2''
  &\yields x \stubra{\subkeyo{\vec{k}}{K}} : A\subkey{\vec{k}}{K}
\end{align*}
where $\lockn{K}_{1}, \dots, \lockn{K}_n$ are the locks in
$\Gamma_2''$. Access to $x$ is not affected by the addition of the
lock $\ctxlock{L}$ (which is placed to the left of it), so the
variable usage is left unchanged:
\begin{align*}
  \Gamma, \ctxlock{L}, \Gamma_1''\subkey{t}{L}, x : A\subkey{t}{L}, \Gamma_2''\subkey{t}{L}
  &\yields x\stubra{\subkeyo{\vec{k}}{K}} \subkey{t}{L}
    :\defeq x\stubra{\subkeyo{\vec{k}}{K}} 
\end{align*}

The counit is more complex to evaluate when arbitrary open terms of
$\Tiny$ are involved. Two complications can arise:

\begin{itemize}
\item First, $t : \Tiny$ may not be closed, so that in
  $\Gamma_1, x : A, \Gamma_2, \Gamma' \yields t : \Tiny$, the
  variables used in the term $t$ are now locked behind the fresh locks
  in $\Gamma''$ by the time we reach the variable $x$. We therefore
  apply $\admbra{\subkeyo{\vec{k}}{K}}$ to the term $t$, so:
  \begin{align*}
    x \stubra{\subkeyo{\vec{j}}{J}, \subkeyo{\vec{k}}{K}} \subkey{t}{L}
    :\defeq 
    x \stubra{\subkeyo{\vec{j}}{J}, \subkeyo{t\subkey{\vec{k}}{K}}{L}, \subkeyo{\vec{k}}{K}}
  \end{align*}
\item Instead, the terms $\vec{j}$ and $\vec{k}$ may not be closed,
  and so some of the the variables used in them might now lie behind
  the new lock $\ctxlock{L}$. The term $t$ is used to unlock them:
  \begin{align*}
    x \stubra{\subkeyo{\vec{j}}{J}, \subkeyo{\vec{k}}{K}} \subkey{t}{L}
    :\defeq 
    x \stubra{\subkeyo{\vec{j}\subkey{t}{L}}{J}, \subkeyo{t}{L}, \subkeyo{\vec{k}\subkey{t}{L}}{K}}
  \end{align*}

\item Finally, and slightly horrifyingly, the two complications can happen
  at the same time, yielding the final, general definition:
  \begin{align*}
    x \stubra{\subkeyo{\vec{j}}{J}, \subkeyo{\vec{k}}{K}} \subkey{t}{L}
    :\defeq 
    x \stubra{\subkeyo{\vec{j}\subkey{t}{L}}{J}, \subkeyo{t\subkey{\vec{k}\subkey{t}{L}}{K}}{L}, \subkeyo{\vec{k}\subkey{t}{L}}{K}}
  \end{align*}
  
\end{itemize}

\paragraph{Examples of the Counit.} The fully general rule is the
worst-case scenario; the vast majority of cases encountered in
practice are simpler. First, the counit commutes with ordinary
term-formers such as pairing:
\begin{alignat*}{3}
  &x : \NN,  y : \NN
  &&\yields (x, y) &&: \NN \times \NN \\
  &x : \NN,  y : \NN, \ctxlock{L}, i : \Tiny
  &&\yields (x, y)\subkey{i}{L} \\
  &&&\defeq (x\substuck{i}{L}, y\substuck{i}{L}) &&: \NN \times \NN 
\end{alignat*}
Any variables which are bound in a term do not get annotated with the new counit:
\begin{alignat*}{3}
  &x : \NN
  &&\yields (\lambda y. x + y) &&: \NN \to \NN  \\
  &x : \NN,\ctxlock{L}, i : \Tiny
  &&\yields (\lambda y. x + y)\subkey{i}{L} \\
  &&&\defeq (\lambda y. x\substuck{i}{L} + y) &&: \NN \to \NN 
\end{alignat*}
When the variable $y : \NN$ is bound, it is added to the context
\emph{after} the lock $\ctxlock{L}$, the variable usage therefore
needs no annotation to be well-formed. In particular, the rule has no
effect on closed terms:
\begin{alignat*}{3}
  &\cdot
  &&\yields 2 &&: \NN  \\
  &\ctxlock{L}, i : \Tiny
  &&\yields 2\subkey{i}{L} \\
  &&&\defeq 2 &&: \NN
\end{alignat*}

The counit is also applied to the type of a term, and is computed in a
similar way.
\begin{alignat*}{3}
  &A : \univ, B : A \to \univ, f : \prd{x : A} B(x)
  &&\yields f &&: \prd{x : A} B(x) \\
  &A : \univ, B : A \to \univ, f : \prd{x : A} B(x), \ctxlock{L}, i : \Tiny
  &&\yields f\subkey{i}{L} &&: \left(\prd{x : A} B(x)\right)\subkey{i}{L} \\
  &&&\defeq f\substuck{i}{L} &&: \prd{x : A\subkey{i}{L}} (B(x))\subkey{i}{L} \\
  &&&\defeq f\substuck{i}{L} &&: \prd{x : A\substuck{i}{L}} B\substuck{i}{L}(x)
\end{alignat*}
Because $x$ is bound in the $\Pi$ type-former, its usage is not
annotated in the codomain $B\substuck{i}{L}(x)$. The type-family
$B\substuck{i}{L}$ has type
$(A \to \univ)\substuck{i}{L} \defeq (A\substuck{i}{L} \to \univ)$, so
applying it to $x\substuck{i}{L}$ is well-formed.

When a variable already has a stuck counit attached, the result of
applying an additional counit will depend on the relative position of
the two locks. For the simplest possible case, the variable usage
\[x : A,\ctxlock{K}, k : \Tiny \yields x\substuck{k}{K} : A,\] there are
already a few possibilities.

\begin{itemize}
\item If the new lock is placed at the end of the context, the terms
  used on the existing lock will also end up with the stuck counit on
  their variables.
  \begin{alignat*}{3}
    &x : A,\ctxlock{K}, k : \Tiny
    &&\yields x\substuck{k}{K} &&: A \\
    &x : A,\ctxlock{K}, k : \Tiny, \ctxlock{L}, i : \Tiny
    &&\yields x\substuck{k}{K} \subkey{i}{L} \\
    &&&\defeq x\stubra{\subkeyo{k\subkey{i}{L}}{K}, \subkeyo{i}{L}}  \\
    &&&\defeq x\stubra{\subkeyo{k\substuck{i}{L}}{K}, \subkeyo{i}{L}} &&: A
  \end{alignat*}
\item Moving one step to the left, we may place the new lock in
  between $\ctxlock{K}$ and $k : \Tiny$. In this case neither of the
  variables $i$ or $k$ are locked, and so do not need stuck counit of
  their own to be used.
  \begin{alignat*}{3}
    &x : A, \ctxlock{K}, k : \Tiny
    &&\yields x\substuck{k}{K} &&: A \\
    &x : A, \ctxlock{K}, \ctxlock{L}, i : \Tiny, k : \Tiny
    &&\yields x\substuck{k}{K} \subkey{i}{L} \\
    &&&\defeq x\stubra{\subkeyo{k\subkey{i}{L}}{K}, \subkeyo{i}{L}}  \\
    &&&\defeq x\stubra{\subkeyo{k}{K}, \subkeyo{i}{L}} &&: A
  \end{alignat*}
\item One step further, we may place the new lock \emph{before} an
  existing one, in which case the term $i$ is the one which gains a
  stuck counit in order to be used:
  \begin{alignat*}{3}
    &x : A, \ctxlock{K}, k : \Tiny
    &&\yields x\substuck{k}{K} &&: A \\
    &x : A, \ctxlock{L}, i : \Tiny, \ctxlock{K}, k : \Tiny
    &&\yields x\substuck{k}{K} \subkey{i}{L} \\
    &&&\defeq x\stubra{\subkeyo{i\subkey{k\subkey{i}{L}}{K}}{L}, \subkeyo{k}{K}} \\
    &&&\defeq x\stubra{\subkeyo{i\subkey{k}{K}}{L}, \subkeyo{k}{K}} \\
    &&&\defeq x\stubra{\subkeyo{i\substuck{k}{K}}{L}, \subkeyo{k}{K}} &&: A
  \end{alignat*}
\item Finally, we may place the new lock at the start of the context,
  in which case the variable usage is not changed at all:
  \begin{alignat*}{3}
    &x : A, \ctxlock{K}, k : \Tiny
    &&\yields x\substuck{k}{K} &&: A \\
    &\ctxlock{L}, i : \Tiny, x : A, \ctxlock{K}, k : \Tiny
    &&\yields x\substuck{k}{K} \subkey{i}{L} \\
    &&&\defeq x\substuck{k}{K} &&: A
  \end{alignat*}
\end{itemize}

\paragraph{Unit.} To complete the implementation of the adjunction,
we have an admissible rule representing the unit map. The following
rule is precomposition with the substitution
$\Gamma \to (\Gamma,i : \Tiny, \ctxlock{L})$, suitably generalised
with a telescope $\Gamma'$:
\begin{mathpar}
  \inferrule*[left=unit-tele,fraction={-{\,-\,}-}]
  {\Gamma, i : \Tiny, \ctxlock{L} \yields \Gamma' \tele}
  {\Gamma \yields \Gamma'\sublock{i}{L} \tele} \and
  \inferrule*[left=unit,fraction={-{\,-\,}-}]
  {\Gamma, i : \Tiny, \ctxlock{L}, \Gamma' \yields a : A}
  {\Gamma, \Gamma'\sublock{i}{L} \yields a\sublock{i}{L} : A\sublock{i}{L}}
\end{mathpar}
As with the $\subkey{t}{L}$ operation, we use substitution-like
syntax for the unit map because, like substitution, it commutes past
everything until it reaches a variable. The $\rbind{i}$ notation is
chosen to match the syntax that is used for the type-former later.

The lock and key then `click together', yielding an ordinary
substitution. In the simplest case, when $t : \Tiny$ is closed and
$x$ has a single matching stuck \rulen{counit}.
\begin{align*}
  x \substuck{t}{L}\sublock{i}{L} :\defeq x[t/i]
\end{align*}
If $t$ is an open term, the unit rule must continue into $t$:
\begin{align*}
  x \substuck{t}{L}\sublock{i}{L} :\defeq x[t\sublock{i}{L}/i]
\end{align*}

Of course, because $x$ is already a variable there are only two
possibilities: either it is equal to the variable $i$ bound in the
unit rule or it is not.
\begin{align*}
  i \substuck{t}{L}\sublock{i}{L} &:\defeq t\sublock{i}{L} \\
  x \substuck{t}{L}\sublock{i}{L} &:\defeq x \qquad \text{otherwise}
\end{align*}

If $x$ has several stuck counits, the operation needs to continue
into the other stuck counits.
\begin{align*}
  i \stubra{\subkeyo{\vec{j}}{J}, \subkeyo{t}{L}, \subkeyo{\vec{k}}{K}}\sublock{i}{L}
  &:\defeq {} t\sublock{i}{L} \\
  x \stubra{\subkeyo{\vec{j}}{J}, \subkeyo{t}{L}, \subkeyo{\vec{k}}{K}}\sublock{i}{L} 
  &:\defeq {} x \stubra{\subkeyo{\vec{j}\sublock{i}{L}}{J}, \subkeyo{\vec{k}\sublock{i}{L}}{K}}
\end{align*}

In contrast to the counit, this rule is never stuck. After any
application of $\sublock{i}{L}$, the lock $\lockn{L}$ and the
variable $i$ no longer appear in the resulting term
$a\sublock{i}{L}$.
  
\paragraph{Examples of the Unit.}
Now, applying the unit rule. Similar to the counit, the unit commutes
with ordinary type formers (in these cases having no effect, because
the variables used come after the lock).
\begin{alignat*}{3}
  &i : \Tiny, \ctxlock{L},x : \NN,  y : \NN
  &&\yields (x, y) &&: \NN \times \NN \\
  &x : \NN,  y : \NN
  &&\yields (x, y)\sublock{i}{L} \\
  &&&\defeq (x\sublock{i}{L}, y\sublock{i}{L})\\
  &&&\defeq (x, y) &&: \NN \times \NN  \\
  &i : \Tiny, \ctxlock{L}, x : \NN
  &&\yields (\lambda y. x + y) &&: \NN \to \NN  \\
  &x : \NN
  &&\yields (\lambda y. x + y)\sublock{i}{L} \\
  &&&\defeq (\lambda y. x\sublock{i}{L} + y) \\
  &&&\defeq (\lambda y. x + y) &&: \NN \to \NN
\end{alignat*}
If the variable used is not the one that the unit is being applied to
then it remains unchanged:
\begin{alignat*}{2}
  &x : A, i : \Tiny,\ctxlock{L}, k : \Tiny
  &&\yields x\substuck{k}{L} : A \\
  &x : A, k : \Tiny
  &&\yields (x\substuck{k}{L})\sublock{i}{L} \\
  &&&\defeq x[k/i] \\
  &&&\defeq x : A
\end{alignat*}
The simplest interesting case is a variable usage that matches the
variable that the unit is being applied to.
\begin{alignat*}{2}
  &i : \Tiny,\ctxlock{L}, k : \Tiny
  &&\yields i\substuck{k}{L} : \Tiny \\
  &k : \Tiny
  &&\yields (i\substuck{k}{L})\sublock{i}{L} \\
  &&&\defeq i[k\sublock{i}{L}/i] \\
  &&&\defeq k\sublock{i}{L} \\
  &&&\defeq k : \Tiny 
\end{alignat*}
That is not the only way we could have gained access to $i$. We can
use the term of $\Tiny$ just constructed to gain access to $i$ a
second time:
\begin{alignat*}{2}
  &i : \Tiny, \ctxlock{L}, k : \Tiny
  &&\yields i\substuck{i\substuck{k}{L}}{L} : \Tiny \\
  &k : \Tiny
  &&\yields \left( i\substuck{i\substuck{k}{L}}{L} \right)\sublock{i}{L} \\
  &&&\defeq i\substuck{k}{L}\sublock{i}{L} \\
  &&&\defeq k\sublock{i}{L} \\
  &&&\defeq k : \Tiny
\end{alignat*}
Nothing stops us from iterating this forever:
\begin{alignat*}{2}
  &i : \Tiny, \ctxlock{L}, k : \Tiny
  &&\yields
     i\substuck{i\substuck{i\substuck{k}{L}}{L}}{L} : \Tiny
\end{alignat*}
Semantically, the context corresponds to $i : \Tiny \to \Tiny$ and
$k : \Tiny$, and the term is an incarnation of the iterated
application $i(i(\dots i(k)))$. The unit $\sublock{i}{L}$ replaces $i$
with the identity, and so the entire term reduces to $k$ regardless of
how many iterations we do.

The result of applying the counit may be different for each variable,
depending on what terms are used in the attached counit:
\begin{alignat*}{2}
  &i : \Tiny,\ctxlock{L}, j : \Tiny, k : \Tiny
  &&\yields (i\substuck{j}{L}, i\substuck{k}{L}) : \Tiny \times \Tiny \\
  &j : \Tiny, k : \Tiny
  &&\yields (i\substuck{j}{L}, i\substuck{k}{L})\sublock{i}{L} \\
  &&&\defeq (i\substuck{j}{L}\sublock{i}{L}, i\substuck{k}{L}\sublock{i}{L}) \\
  &&&\defeq (i[j/i], i[k/i]) \\
  &&&\defeq (j, k) : \Tiny \times \Tiny
\end{alignat*}

At this point it may appear as though the counits are just delayed
substitutions that are `activated' by the unit, but this is not the
case. Suppose we have global elements $1, 2, 3, 4 : \Tiny$, and
consider a term
\begin{align*}
  x : \Tiny, y : \Tiny, \ctxlock{L}, \ctxlock{K}
  &\yields
    (x\stubra{\subkeyo{1}{L},\subkeyo{2}{K}},y\stubra{\subkeyo{3}{L},\subkeyo{4}{K}})
    : \Tiny \times \Tiny
\end{align*}
Applying two different substitutions allows us to select which of the
stuck keys to apply:
\begin{align*}
  &\phantom{{}\defeq{}} (x\stubra{\subkeyo{1}{L},\subkeyo{2}{K}}, y\stubra{\subkeyo{3}{L},\subkeyo{4}{K}})[i/x][j/y][\rbind{i}/\ctxlock{L}][\rbind{j}/\ctxlock{K}] \\
  &\defeq (i\stubra{\subkeyo{1}{L},\subkeyo{2}{K}}, j\stubra{\subkeyo{3}{L},\subkeyo{4}{K}})[\rbind{i}/\ctxlock{L}][\rbind{j}/\ctxlock{K}] \\
  &\defeq (i\stubra{\subkeyo{1}{L},\subkeyo{2}{K}}[\rbind{i}/\ctxlock{L}][\rbind{j}/\ctxlock{K}], j\stubra{\subkeyo{3}{L},\subkeyo{4}{K}}[\rbind{i}/\ctxlock{L}][\rbind{j}/\ctxlock{K}]) \\
  &\defeq (1, 4)
\end{align*}
and
\begin{align*}
  &\phantom{{}\defeq{}} (x\stubra{\subkeyo{1}{L},\subkeyo{2}{K}}, y\stubra{\subkeyo{3}{L},\subkeyo{4}{K}})[j/x][i/y][\rbind{i}/\ctxlock{L}][\rbind{j}/\ctxlock{K}] \\
  &\defeq (j\stubra{\subkeyo{1}{L},\subkeyo{2}{K}}, i\stubra{\subkeyo{3}{L},\subkeyo{4}{K}})[\rbind{i}/\ctxlock{L}][\rbind{j}/\ctxlock{K}] \\
  &\defeq (j\stubra{\subkeyo{1}{L},\subkeyo{2}{K}}[\rbind{i}/\ctxlock{L}][\rbind{j}/\ctxlock{K}], i\stubra{\subkeyo{3}{L},\subkeyo{4}{K}}[\rbind{i}/\ctxlock{L}][\rbind{j}/\ctxlock{K}]) \\
  &\defeq (2, 3)
\end{align*}
We will be able to cause these different substitutions to occur once
we have the type-former. So the \emph{user} of a term containing stuck
keys gets to choose which of the terms is eventually used, but doesn't
get to choose the actual term that gets plugged in. And the stored
terms for the other locks are completely lost!

These rules are peculiar, but if we insist on pushing the admissible
rules to the leaves it seems something like this is forced on us by
the setting we are trying to capture.

\section{The Amazing Right Adjoint}\label{sec:type}

The type-former is made right adjoint to $\ctxlock{L}$ in the
`obvious' way, following the pattern of~\cite{clouston:fitch-style,
  drats, fitchtt}.
\begin{mathpar}
  \inferrule*[left=$\rformsym$-form]
  {\Gamma, \ctxlock{L} \yields A : \univ}
  {\Gamma \yields \rform{L} A : \univ} \and
  \inferrule*[left=$\rformsym$-intro]
  {\Gamma, \ctxlock{L} \yields a : A}
  {\Gamma \yields \rintro{L} a : \rform{L} A} \and
  \inferrule*[left=$\rformsym$-elim]
  {\Gamma, i : \Tiny \yields r : \rform{L} A}
  {\Gamma \yields r(\rbind{i}) : A\sublock{i}{L}} \\
  (\rintro{L}{a})(\rbind{i}) \defeq a\sublock{i}{L} \and
  r \defeq \rintro{L}{(r\subkey{i}{L}(\rbind{i}))}
\end{mathpar}

In words:
\begin{itemize}
\item \rulen{$\rformsym$-form} and \rulen{$\rformsym$-intro}: For any
  term $a : A$, that uses all ambient variables locked behind
  $\ctxlock{L}$, we can bind the lock $\ctxlock{L}$ and form a term
  $\rintro{L} a : \rform{L} A$. The term syntax is intended to be
  reminiscent of an ordinary $\lambda$-abstraction.

  Reading upwards, going under $\rform{L}$ or $\rintro{L} -$ means
  that all extant variables become locked by $\ctxlock{L}$, so every
  future use of those variables in $a$ must have an attached
  $-\substuck{t}{L}$ to be well formed.

  These are precisely the formation and introduction rule for a
  dependent right adjoint type~\cite{drats}.

  If $A$ is a closed type, we will just write $\rformu A$ rather than
  binding an unused lock name.
  
\item \rulen{$\rformsym$-elim}: If using an additional assumption
  $i : \Tiny$ we can produce a term $r : \rform{L} A$, then we can
  amazingly apply $r$ to the fresh $i$ to form
  $r(\rbind{i}) : A\sublock{i}{L}$. We are free to ignore the new
  assumption $i$ in the term $r$ if we wish, just as a constant
  function may ignore its argument.

  The syntax is, put mildly, unorthodox: to keep the analogy with
  ordinary function application, the variable $i$ is bound in the
  parentheses on the right, but then is \emph{in scope in the body of
    the `function' to the left}\footnote{If we took Lawvere
    seriously~\cite{lawvere:adjointness-foundations} and wrote
    function application as $xf$, then we would not have this
    problem!}.

\item \rulen{$\rformsym$-beta}: Binding a lock $\rintro{L} a$ and
  amazingly applying the result to $\rbind{i}$ reduces to
  $a\sublock{i}{L}$, with
  $(\rintro{L}{a})(\rbind{i}) \defeq a\sublock{i}{L}$ in perfect
  analogy with $(\lambda x. b)(a) \defeq b[a/x]$.
  
\item \rulen{$\rformsym$-eta}: Any term $r : \rform{L} A$ is equal to
  the term that binds a lock and immediately amazingly applies $r$ to
  it, in analogy with $f \defeq (\lambda x. f(x))$.
\end{itemize}

\subsection{Basic Examples}

To begin, some maps that are easily definable from the rules. The
functor $\Tiny \to -$ is a monad, and its right adjoint $\rformu{}$ is
therefore a comonad.

\begin{definition}
  If $A$ is (for now) a closed type, define an 
  extract map $\extract_A : \rformu{A} \to A$ by
  \begin{align*}
    \extract_A &: \rformu{A} \to A \\
    \extract_A(r) &:\defeq r(\rbind{i})
  \end{align*}
  In words, to get a term of $A$ we need a term of $\rformu{A}$ that
  we can amazingly evaluate on an additional assumption $i : \Tiny$.
  But we already have the variable $r : \rformu{A}$, so we have no
  need to use the additional assumption.
\end{definition}

We have no way of quantifying over \emph{closed} types internally, so
any definition mentioning closed types should be considered
provisional: a corresponding fully internal definition will be given
shortly after.

\begin{definition}
  If $f : A \to B$ is a (for now) closed function, define $\rformu f$ by
  \begin{align*}
    \rformu f &: \rformu A \to \rformu B \\
    (\rformu f)(r) &:\defeq \rintro{L}{f(r\substuck{i}{L}(\rbind{i}))}
  \end{align*}
\end{definition}

This should be compared with the definition of function
post-composition, as the shape of the definition is the same:
\begin{align*}
  f \circ - &: (C \to A) \to (C \to B) \\
  (f \circ -)(r) &:\defeq \lambda c. f(r(c))
\end{align*}

In words, we start with $r : \rformu A$. To produce a term of
$\rformu B$ we have to produce term of $B$, but with all our present
assumptions locked behind $\ctxlock{L}$. There is a function
$f : A \to B$ available, so we just need access to a term of $A$. We
cannot amazingly evaluate $r : \rformu A$ to get this term of $A$,
because $r$ has become trapped behind $\ctxlock{L}$. But we \emph{can}
amazingly evaluate $r\substuck{i}{L} : \rformu A$ on an additional
assumption $i : \Tiny$. This gets us an argument of type $A$ that we
can apply $f$ to. This time, the presence of the additional assumption
$i : \Tiny$ is crucial: it is exactly what we needed to gain access
to $r$.

If $A$ is not a closed type, then the type of
$\extract_A : \rformu{A} \to A$ is not well-formed: once we go
under $\rformsym$, we lose access to the variable $A$. Similarly, if
$f$ is not a closed function, then we cannot produce
$\rformu f : \rformu A \to \rformu B$, even if the types $A$ and $B$
are themselves closed, because the function $f$ (as a variable) also
becomes locked behind $\ctxlock{L}$ when the argument $r$ does.

However: if \emph{all} the inputs to the above constructions are
provided under $\rformu$, then we can have access to them where we
need it. Suppose instead that $A : \rformu \univ$. Then, we may use
$A$ as an input to $\rformsym$ by forming
$\rform{L}{A\substuck{i}{L}(\rbind i)} : \univ$. Our map $\extract_A$
can now be correctly typed:

\begin{definition}
  For $A : \rformu \univ$, define a map $\extract_A$
  \begin{align*}
    &\extract_A : \rform{L}{A\substuck{i}{L}(\rbind i)} \to A(\rbind i) \\
    &\extract_A(r) :\defeq r(\rbind{i})
  \end{align*}  
\end{definition}

And similarly for functoriality:
\begin{definition}
  For $A,B : \rformu \univ$ and
  $f : \rform{L}\left((\relim{i.A\substuck{i}{L}}) \to (\relim{i.B\substuck{i}{L}})\right)$, define $\rformu f$ by
  \begin{align*}
    \rformu f &: \rform{L}\left(\relim{i.A\substuck{i}{L}}\right) \to \rform{L}\left( \relim{i.B\substuck{i}{L}}\right) \\
    \rformu f(r) &:\defeq \rintro{L}{(\relim{j. f\substuck{j}{L}})(\relim{i.r\substuck{i}{L}})}
  \end{align*}
\end{definition}

This simple example is already very noisy, so to cut down on symbols
we introduce two pieces of syntactic sugar.

\paragraph{First:} Like ordinary \rulen{$\to$-intro}, and as with any
negative type-former, the \rulen{$\rformsym$-intro} rule is
right-invertible and so we will define terms of it via copattern
matching. For example, we will write
\begin{align*}
  r(\ctxlock{L}, \ctxlock{K}) :\defeq \judge
  \quad \text{to mean} \quad
  r :\defeq \rintro{L}{\rintro{K}{\judge}}
\end{align*}
This integrates nicely with ordinary copattern-style definitions of
functions: we write
\begin{align*}
  f(x,y,\ctxlock{L},z) :\defeq \judge
  \quad \text{to mean} \quad
  f :\defeq \lambda x. \lambda y. \rintro{L}{\lambda z. \judge}
\end{align*}
This syntax makes it easy to see at a glance which locks affect each
variable in the term $\judge$: the $x$ and $y$ here are to the left of
$\ctxlock{L}$ in the list of arguments and so are locked by
$\ctxlock{L}$, whereas $z$ is not locked at all.

Similarly, we might evaluate some combination of $\to$ and $\rformsym$
by applying it to a tuple of arguments, as in $f(a, b, \rbind{i}, c)$.
The `amazing' argument to a lock is a new bound variable $i$ as in
\rulen{$\rformsym$-elim}, but remember that this variable is now in
scope to the left of that argument in the tuple \emph{and} in the
function itself. This form of evaluation computes in the `obvious' way,
with a $\beta$-reduction for each entry in the list of arguments. If
$f$ is defined via the pattern match above, we find
\begin{align*}
  f(a(i), b(i), \rbind{i}, c)
   &\defeq f(a(i))(b(i))(\rbind{i})(c) \\
   &\defeq \left( \lambda x. \lambda y. \rintro{L}{\lambda z. \judge} \right)(a(i))(b(i))(\rbind{i})(c) \\
   &\defeq \left( \rintro{L}{\lambda z. \judge[a(i)/x][b(i)/y]} \right)(\rbind{i})(c) \\
   &\defeq \left( \lambda z. \judge[a(i)/x][b(i)/y]\sublock{i}{L} \right)(c) \\
   &\defeq \judge[a(i)/x][b(i)/y]\sublock{i}{L}[c/z]
\end{align*}
We have arranged the syntax of our admissible rules so that function
definitions and uses click together, as in ordinary function
evaluation: comparing $f(a(i), b(i), \rbind{i}, c)$ and
$f(x,y,\ctxlock{L},z)$, the first arguments become substitutions
$[a(i)/x][b(i)/y]$ and the amazing $\rbind{i}$ `argument' becomes a
`substitution' $\sublock{i}{L}$.

Any ordinary function $f : A \to B$ is tautologically equal to the definition
\begin{align*}
  f(x) :\defeq f(x)
\end{align*}
by $\eta$-expansion. For $\rformsym$, this $\eta$-expansion introduces
an instance of the counit. That is, any term $r : \rform{L}{A}$ is
tautologically equal to the definition
\begin{align*}
  r(\ctxlock{L}) :\defeq r\subkey{i}{L}(\rbind{i})
\end{align*}

\begin{remark}
  Using copattern syntax could also apply to other right adjoint
  modalities, like $\sharp$~\cite{mike:real-cohesive-hott}. In the
  original syntax, functoriality of $\sharp$ on a function
  $f : A \to B$ is defined by
  $\sharp f(u) :\defeq f(u_\sharp)^\sharp$. In a copattern-style
  syntax, this might be written
  $\sharp f(u, \sharp) :\defeq f(u(\sharp))$, or
  $\sharp f(u, \sharp_{\mathsf{i}}) :\defeq f(u(\sharp_{\mathsf{e}}))$
  if we wish to distinguish the symbols for introduction and
  elimination. Here, the interpretation is that a variable placed to
  the left of $\sharp_{\mathsf{i}}$ becomes crisp in the body of the function, and
  an `application' $s(\sharp_{\mathsf{e}})$ is only well-typed when $s$ is crisp.
  In this case, $u$ becomes crisp and so it is valid to apply it to
  $\sharp_{\mathsf{e}}$.
\end{remark}

\paragraph{Second:} We will often need to extract terms of $A$ from
assumptions $r : \rformu{A}$ using the extract map $\extract_A$
defined above. When such assumptions are behind many context locks,
the \rulen{$\rformsym$-elim} rule provides a term $i : \Tiny$ that can
be used to unlock all these locks at once, giving us access to $A$
regardless of where $r$ lies in the context.
\begin{definition}
  If a variable $x : \rform{L} A$ is behind several locks
  $\ctxlocke{\lockn{K}_1} \dots \ctxlocke{\lockn{K}_n}$, then let
  \begin{align*}
    \rget{x} :\defeq (x \stubra{\subkeyoe{i}{\lockn{K}_1},
    \dots, \subkeyoe{i}{\lockn{K}_n}})(\rbind{i}) : A \admbra{\subkeyoe{i}{\lockn{K}_1},
    \dots, \subkeyoe{i}{\lockn{K}_n}}\sublock{i}{L}
  \end{align*}
  in particular, if $x : \rform{L} A$ is behind no locks in the
  context then $\rget{x} :\defeq \extract_A(x)$.
\end{definition}
This notation $\rget{x}$ is intended to evoke the elimination rule
$x_\sharp$ used in cohesive HoTT (and similar). It cannot be
internalised as a function, it is just an admissible piece of syntax.

Using these shorthands, the definition of functoriality is much less
noisy: For $A,B : \rformu \univ$ and
$f : \rform{L}\left(\rget{A} \to \rget{B}\right)$ we have
\begin{align*}
  &\rformu f : \rform{L}\rget{A} \to \rform{L}\rget{B} \\
  &\rformu f(r, \ctxlock{L}) :\defeq \rget{f}(\rget{r})
\end{align*}
The comultiplication of $\rformsym$ can be also written nicely using
this notation. For a type $A : \rformu \univ$, define
\begin{align*}
  &\delta_A : \rformu{\rget{A}} \to \rformu{\rformu{\rget{A}}} \\
  &\delta_A(r, \ctxlock{L}, \ctxlock{K}) :\defeq \rget{r}
\end{align*}

We can show that, as a right adjoint, $\rform{}$ preserves $1$ and pullbacks.
\begin{proposition}
  $\rformu{1} \equiv 1$
\end{proposition}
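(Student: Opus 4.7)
The plan is to construct an equivalence directly, using the $\eta$ rules for $\rformsym$ and $1$. Because $1$ is a closed type, it is accessible under any context lock, so the introduction $\rintro{L}{\star}$ is a well-formed element of $\rformu{1}$ in the empty context.

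First I would define the two candidate maps. In one direction, let $\phi : \rformu{1} \to 1$ be the unique map, $\phi(r) :\defeq \star$. In the other direction, let $\psi : 1 \to \rformu{1}$ be defined (using the copattern syntax introduced earlier) by $\psi(u, \ctxlock{L}) :\defeq \star$. Both are well-typed with no use of the counit needed, since $\star$ is a closed term.

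Next I would verify the two composites. For $\phi \circ \psi$, applied to $u : 1$, we have $\phi(\psi(u)) \defeq \phi(\rintro{L}{\star}) \defeq \star$, which equals $u$ by the $\eta$ rule for $1$. For $\psi \circ \phi$, applied to $r : \rformu{1}$, we compute $\psi(\phi(r)) \defeq \psi(\star) \defeq \rintro{L}{\star}$. On the other hand, the $\eta$ rule \rulen{$\rformsym$-eta} gives $r \defeq \rintro{L}{r\subkey{i}{L}(\rbind{i})}$, and since $r\subkey{i}{L}(\rbind{i})$ has type $1$, by the $\eta$ rule for $1$ it is definitionally equal to $\star$. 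So $r \defeq \rintro{L}{\star} \defeq \psi(\phi(r))$.

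Since both composites are definitionally the identity, $\phi$ and $\psi$ exhibit the required equivalence $\rformu{1} \equiv 1$. The only real content here is checking that the copattern definition of $\psi$ type-checks (which uses that $1$ is closed) and that the $\eta$ rule for $\rformsym$ collapses correctly when composed with the $\eta$ rule for $1$; no obstacle of substance arises, and this serves as a warm-up for the pullback case, where the analogous argument will need to track the pair of projections through $\rformsym$.
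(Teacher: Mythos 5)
Your proof is correct and rests on the same core step as the paper's: $\eta$-expanding any $r : \rformu{1}$ to $\rintro{L}{r\subkey{i}{L}(\rbind{i})}$ and then collapsing the body to $\star$ by the $\eta$-rule for $1$, giving $r \defeq \rintro{L}{\star}$. The paper states this directly as contractibility of $\rformu{1}$, while you spell out the two maps and both round-trips, but the content is the same.
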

\begin{proof}
  Any term $r : \rformu{1}$ will $\eta$-expand as
  $r \defeq \rintro{L}{r\substuck{i}{L}(\rbind{i})} \defeq
  \rintro{L}{\star}$ and so $\rformu{1}$ is contractible.
\end{proof}

\begin{proposition}\label{prop:root-preserve-sum}
  If $A : \rformu \univ$ and
  $B : \rformu (\rget{A} \to \univ)$ there is an equivalence
  \begin{align*}
    \rformu \left( \sm{x : \rget{A}}\rget{B}(x) \right) \equiv \sm{x : \rformu \rget{A}} \rformu (\rget{B}(\rget{x}))
  \end{align*}
\end{proposition}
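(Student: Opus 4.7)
The plan is to construct a pair of mutually inverse maps by copattern matching, then check the inverse equations using the $\beta$ and $\eta$ rules for $\rformsym$ and $\Sigma$. This is the syntactic realisation of the fact that the right adjoint $\rformu$ preserves limits, so the main work is writing down the maps in the theory.

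For the forward direction, I would define
\begin{align*}
  f : \rformu \left( \sm{x : \rget{A}} \rget{B}(x) \right) &\to \sm{x : \rformu \rget{A}} \rformu \left( \rget{B}(\rget{x}) \right), \\
  f(r) &:\defeq \bigl(\rintro{L}{\proj_1(\rget{r})},\ \rintro{L}{\proj_2(\rget{r})}\bigr),
\end{align*}
where each $\rget{r}$ inside $\rintro{L}{-}$ elaborates to $(r\substuck{i}{L})(\rbind{i})$. For the backward direction I would package the two components together via
\begin{align*}
  g : \sm{x : \rformu \rget{A}} \rformu \left( \rget{B}(\rget{x}) \right) &\to \rformu \left( \sm{x : \rget{A}} \rget{B}(x) \right), \\
  g(x, y, \ctxlock{L}) &:\defeq (\rget{x},\ \rget{y}).
\end{align*}

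For $g \circ f \defeq \id$, unfolding gives $g(f(r), \ctxlock{L}) \defeq (\rget{\rintro{L}{\proj_1(\rget{r})}},\ \rget{\rintro{L}{\proj_2(\rget{r})}})$, which reduces by $\rformsym$-$\beta$ to $(\proj_1(\rget{r}), \proj_2(\rget{r}))$, then by $\Sigma$-$\eta$ to $\rget{r}$, so $g(f(r)) \defeq \rintro{L}{\rget{r}} \defeq r$ by $\rformsym$-$\eta$. For $f \circ g \defeq \id$, the two components of $f(g(x,y))$ each reduce by $\rformsym$-$\beta$ and $\Sigma$-$\beta$ to $\rintro{L}{\rget{x}}$ and $\rintro{L}{\rget{y}}$, agreeing with $x$ and $y$ by $\rformsym$-$\eta$.

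The delicate step is confirming that the second component of $f(r)$ has the type demanded by the $\Sigma$: one must check that $\rformu \left( \rget{B}(\rget{\rintro{L}{\proj_1(\rget{r})}}) \right)$ reduces via $\rformsym$-$\beta$ and the trivial action of $\sublock{i}{L}$ on the free variables $A$, $B$, $r$ down to the type $\rformu \left( \rget{B}(\proj_1(\rget{r})) \right)$ actually carried by the chosen term. Once this definitional equality on types is settled, the remaining reductions are routine applications of the conversion rules.
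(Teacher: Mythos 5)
Your maps coincide with the ones in the paper (which simply writes $r \mapsto (\rintro{L}{\proj_1 \rget{r}}, \rintro{K}{\proj_2\rget{r}})$ and $(s,t) \mapsto \rintro{L}(\rget{s}, \rget{t})$ and stops there), and your verification of the round-trips via $\rformsym$-$\beta$/$\eta$ and $\Sigma$-$\beta$/$\eta$ is the implicit content of the paper's one-line proof. One cosmetic note: you reuse the lock name $\lockn{L}$ in both components of $f(r)$, whereas the paper uses distinct names $\lockn{L}$ and $\lockn{K}$; this is harmless in principle, but using fresh lock names makes the bookkeeping around $\rget{-}$ and the stuck counits less error-prone, especially in the step where you verify the type of the second component of $f(r)$.
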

\begin{proof}
  Define maps either way by
  \begin{align*}
    r &\mapsto (\rintro{L}{\proj_1 \rget{r}}, \rintro{K}{\proj_2\rget{r}}) \\
    (s,t) &\mapsto \rintro{L}(\rget{s}, \rget{t})
  \end{align*}
\end{proof}

\begin{proposition}
  For any $A : \rformu{\univ}$ and $r, s : \rformu{\rget{A}}$, there
  is an equivalence
  \begin{align*}
    (r = s) \equiv \rformu \left( \rget{r} = \rget{s} \right)
  \end{align*}
\end{proposition}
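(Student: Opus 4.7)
The plan is to mimic \cref{prop:root-preserve-sum} by constructing maps in each direction and verifying they are mutually inverse, using $\rformsym$-beta and eta.

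For the forward direction, I would proceed by path induction on $p : r = s$, setting
\[
  \phi(\refl{r}) :\defeq \rintro{L}{\refl{\rget{r}}}.
\]
Under the lock $\ctxlock{L}$, the body $\refl{\rget{r}}$ unfolds to $\refl{r\substuck{i}{L}(\rbind{i})}$, which inhabits $r\substuck{i}{L}(\rbind{i}) = r\substuck{i}{L}(\rbind{i})$. Equivalently, one can give an explicit formula
\[
  \phi(p, \ctxlock{L}) :\defeq \ap_{\extract}(p\substuck{i}{L}),
\]
where $\extract$ denotes amazing evaluation at a fresh $i : \Tiny$.

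For the backward direction, given $q : \rformu(\rget{r} = \rget{s})$, I would exploit $\rformsym$-eta: we have $r \defeq \rintro{L}{\rget{r}}$ and $s \defeq \rintro{L}{\rget{s}}$, so a path $r = s$ is definitionally a path between two $\rintro{L}$-terms. The body $q\substuck{i}{L}(\rbind{i})$ is precisely a path $r\substuck{i}{L}(\rbind{i}) = s\substuck{i}{L}(\rbind{i})$ under $\ctxlock{L}, i : \Tiny$, i.e., between the bodies of those $\rintro{L}$-terms, and $\psi(q)$ is obtained by lifting this under-the-lock path to the ambient context. The two composites should then reduce to identities by $\rformsym$-beta (which collapses $\rintro{L}$ followed by amazing evaluation in $\phi \circ \psi$) and $\rformsym$-eta (for $\psi \circ \phi$).

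I expect $\psi$ to be the main obstacle. Unlike the $\Sigma$ case, where projections commute with $\rintro{L}$ trivially, here we must produce a path between two $\rintro{L}$-terms from a $\rformu$-wrapped path, with no direct rule for ``$\ap$ of $\rintro{L}$'' available since $\rintro{L}$ is a term constructor rather than a function. A natural way around this is to characterise the identity type as the fibre of the diagonal and invoke preservation of pullbacks, which should follow from \cref{prop:root-preserve-sum} together with preservation of the terminal type; alternatively, one could set up an identity-system argument showing that the total space $\sm{s : \rformu\rget{A}} \rformu(\rget{r} = \rget{s})$ is contractible, again via \cref{prop:root-preserve-sum}.
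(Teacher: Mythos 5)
Your primary plan---building $\phi$ and $\psi$ explicitly and checking the round trips---runs into exactly the obstacle you flag yourself: $\psi$ must convert a path living inside $\rformu$ into a path between two $\rintro{L}$-terms, and there is no ``$\ap$ of $\rintro{L}$'' available. That is where all the work is, and the proposal does not close the gap along this route; the phrase ``lifting this under-the-lock path to the ambient context'' names a step you cannot actually perform. The ``fibre of the diagonal'' detour is also unnecessary: pullbacks along a map into a product reduce to $\Sigma$ and $\Idsym$, so you would be leaning on exactly the statement being proved.

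However, the alternative you mention in the last sentence is precisely the paper's proof. One shows that for fixed $r$ the total space $\sm{s : \rformu\rget{A}} \rformu\left(\rget{r} = \rget{s}\right)$ is contractible, and then applies the fundamental theorem of identity types. By \cref{prop:root-preserve-sum} this total space is equivalent to $\rformu\left(\sm{x : \rget{A}} \rget{r} = x\right)$, whose interior is a contractible singleton, so the whole thing is contractible by $\rformu 1 \equiv 1$. In this light your $\phi$ (path induction with $\refl{r} \mapsto \rintro{L}\refl{\rget{r}}$) is exactly the canonical map that the fundamental theorem asserts is an equivalence; contractibility of the total space supplies its inverse without ever constructing $\psi$ by hand. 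So you have found the right idea, but it is relegated to an ``alternatively'' clause---committing to it from the start, as the paper does, sidesteps the $\psi$ difficulty entirely.
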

\begin{proof}
  By the fundamental theorem of $\Idsym$-types, we just have to check
  that for fixed $r : \rformu{\rget{A}}$, the type
  \begin{align*}
    \sm{s : \rformu{\rget{A}}} \rformu \left( \rget{r} = \rget{s} \right)
  \end{align*}
  is contractible. By \cref{prop:root-preserve-sum}, this
  type is equivalent to
  \begin{align*}
    \rformu{\left(\sm{x : \rget{A}} \rget{r} = x \right)}
  \end{align*}
  and the interior is a contractible pair.
\end{proof}
This is more recognisable as a statement of left-exactness with the
left side $\eta$-expanded:
\begin{corollary}
  $(\rintro{L}{\rget{r}} = \rintro{L}{\rget{s}}) \equiv \rform{L}
  \left( \rget{r} = \rget{s} \right)$
\end{corollary}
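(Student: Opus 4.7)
The plan is to recognise this corollary as essentially a cosmetic restatement of the preceding proposition via the $\eta$-rule for $\rformsym$. First I would note that for any $r : \rform{L}\rget{A}$, the $\eta$-rule gives the definitional equality $r \defeq \rintro{L}{r\substuck{i}{L}(\rbind{i})}$, which under the $\rget{\cdot}$ shorthand reads $r \defeq \rintro{L}{\rget{r}}$, and likewise $s \defeq \rintro{L}{\rget{s}}$. Consequently the identity type $(\rintro{L}{\rget{r}} = \rintro{L}{\rget{s}})$ on the left-hand side of the corollary is judgementally equal to $(r = s)$.

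Having made that identification, the preceding proposition directly supplies the equivalence
\begin{align*}
  (r = s) \equiv \rformu\left(\rget{r} = \rget{s}\right),
\end{align*}
and the right-hand side is exactly the right-hand side of the corollary (with the bound lock name written out as $\lockn{L}$). Transporting the equivalence along the definitional equality above yields the desired statement.

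There is no real obstacle here: the entire content is in the previous proposition, and the corollary exists only to present that content in a form where one can see left-exactness of $\rformsym$ applied to the $\eta$-expanded elements, making the analogy with the usual statement that a right adjoint preserves equalisers more transparent. The only thing one needs to be careful about is that the $\eta$-rule is indeed a definitional equality in this theory (which it is, as stated in the \rulen{$\rformsym$-eta} rule), so that the transport is along a judgemental equality rather than a propositional one, and no additional coherence is required.
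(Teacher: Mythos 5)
Your proof is correct and matches the paper's intent exactly. The paper presents this corollary with no separate proof, relying on the remark immediately before it (``This is more recognisable as a statement of left-exactness with the left side $\eta$-expanded'') — which is precisely the observation you spell out: $\rintro{L}{\rget{r}} \defeq r$ and $\rintro{L}{\rget{s}} \defeq s$ by \rulen{$\rformsym$-eta}, so the corollary's left-hand side is judgementally equal to $(r = s)$ and the preceding proposition applies verbatim.
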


\subsection{Adjointness}\label{sec:adjointness}

We now investigate how $\rformu$ relates to $(\Tiny \to -)$. It is
well-known that, although $(\Tiny \to -)$ externally has a right
adjoint, it is not typically a \emph{fibred} right adjoint unless
$\Tiny \equiv 1$~\cite{yetter:tiny, kock:commutation}.

There is, however, a useful internal statement. To begin, unit and
counit maps for the adjunction.
\begin{definition}
  If $A, B : \univ$ are (for now) closed types, there are maps
  \begin{align*}
    \eta_A &: A \to \rformu(\Tiny \to A) \\
    \varepsilon_B &: (\Tiny \to \rformu B) \to B
  \end{align*}
  given by
  \begin{align*}
    \eta_A(a, \ctxlock{L}, t) &:\defeq a \substuck{t}{L} \\
    \varepsilon_B(f) &:\defeq f(i, \rbind{i})
  \end{align*}
  Allowing the maximum amount of dependency, for \emph{any}
  $A : \univ$, there is a map
  \begin{align*}
    \eta_A &: A \to \rform{L} \prd{t : \Tiny} A \substuck{t}{L}
  \end{align*}
  and for \emph{any} $B : \Tiny \to \rformu \univ$ a map
  \begin{align*}
    \varepsilon_B &: \left(\prd{t:\Tiny} \rform{K} \rget{B(t)} \right) \to B(j, \rbind{j})                    
  \end{align*}
  with the same definitions as in the closed case.
\end{definition}

Besides $\varepsilon_B$ not being definable for an arbitrary type $B$,
we have also seen that $\rformu -$ is not functorial on arbitrary maps
$A \to B$. So to what extent is $(\Tiny \to -) \dashv \rformu -$? We
can prove an internal adjointness statement analogous to one
in~\cite[\S 3]{fhm:left-and-right} for adjunctions in which the
left adjoint is strong monoidal.

\begin{proposition}
  If $A$ and $B$ are closed types then there is an equivalence
  \begin{align*}
    \Phi : \rformu ((\Tiny \to A) \to B) \to (A \to \rformu B)
  \end{align*}
\end{proposition}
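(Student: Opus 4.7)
My plan is to exhibit an explicit inverse $\Psi$ and verify the two compositions by unfolding, using the $\rformsym$-$\beta$ and $\rformsym$-$\eta$ rules together with the way unit substitutions $\sublock{i}{L}$ cancel stuck counits $\substuck{i}{L}$ at the leaves.

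Define the maps by copattern matching:
\begin{align*}
  \Phi(r)(a, \ctxlock{L}) &:\defeq (r\substuck{i}{L}(\rbind{i}))(\lambda t. a\substuck{t}{L}), \\
  \Psi(f, \ctxlock{L}, h) &:\defeq (f\substuck{i}{L}(h(i)))(\rbind{i}).
\end{align*}
In $\Phi$, under $\ctxlock{L}$ the variable $r$ is unlocked by the stuck counit $\substuck{i}{L}$ (with $i : \Tiny$ bound by the $\rformsym$-elim $(\rbind{i})$), producing $r\substuck{i}{L}(\rbind{i}) : (\Tiny \to A) \to B$, which is evaluated at $\lambda t. a\substuck{t}{L} : \Tiny \to A$. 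Dually, in $\Psi$ the variable $f$ is unlocked, applied to $h(i) : A$ to give $\rformu B$, and the $\rformu$ is stripped by $(\rbind{i})$. Closedness of $A$ and $B$ ensures every induced unit substitution acts as the identity on the ambient types.

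For $\Phi \circ \Psi \defeq \id$, function $\eta$ reduces the check to $\Phi(\Psi(f))(a) \defeq f(a)$. Unfolding, the outer $(\rbind{j})$ meets the $\rintro{L}$ inside $\Psi(f)$ and $\rformsym$-$\beta$-reduces, producing a unit $\sublock{j}{L}$ that cancels the stuck counit $\substuck{i}{L}$ on $f$; the subsequent $\to$-$\beta$ against $\lambda t. a\substuck{t}{L}$ then yields $f(a)$. For $\Psi \circ \Phi \defeq \id$, expanding gives $\rintro{L}(\lambda h.\, (\Phi(r)\substuck{i}{L}(h(i)))(\rbind{i}))$; the admissible counit commutes past the outer $\lambda a$ and $\rintro{L}$ in $\Phi(r)$, then $\to$-$\beta$ substitutes $h(i)$ for $a$, and the enveloping $(\rbind{i})$ annihilates the inner $\rintro{L}$. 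After the surviving counit/unit pair on $r$ cancels, what remains is $\rintro{L}(r\substuck{i}{L}(\rbind{i}))$, which equals $r$ by $\rformsym$-$\eta$.

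The hardest step is the second direction: once the $\beta$-reductions fire, one must carefully propagate the admissible counit through $\lambda a$ and the inner $\rintro{L}$ using the commutation rules from \cref{sec:contexts}, and verify that the compound substitutions left behind collapse to exactly the $\eta$-expansion of $r$ rather than leaving residual stuck counits.
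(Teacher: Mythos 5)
Your $\Phi$ and $\Psi$ are precisely the paper's $\Phi$ and $\Phi^{-1}$ specialized to closed $A$, $B$, and your round-trip verification---unfold copatterns, let $\rformsym$-$\beta$ fire so the resulting unit cancels the pushed-in key, and close with $\rformsym$-$\eta$---is exactly the computation the paper carries out in its proof of the dependent generalization (\cref{prop:dep-adj}). Two small points of bookkeeping worth flagging: once a non-variable term such as $\Phi(r)$ or $\Psi(f)$ is substituted for a variable carrying $\substuck{i}{L}$, the stuck counit becomes the \emph{admissible} $\subkey{i}{L}$ that propagates through the term, and in the $\Phi\circ\Psi$ direction the final $\to$-$\beta$ lands you on the $\rformsym$-$\eta$-expansion of $f(a)$ rather than on $f(a)$ itself, so the concluding $\rformsym$-$\eta$ step you use explicitly in the other direction is also needed here; reusing the lock name $\lockn{L}$ in both $\Phi$ and $\Psi$ obscures exactly which key each unit is cancelling.
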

Happily, we can allow $A$ and $B$ to be dependent types:
\begin{proposition}\label{prop:adj}
  For any $A : \univ$ and $B : \rformu \univ$, there is an equivalence
  \begin{align*}
    \rform{L} ((\prd{t:\Tiny} A\substuck{t}{L}) \to \rget{B}) \to (A \to \rform{L} \rget{B})
  \end{align*}
\end{proposition}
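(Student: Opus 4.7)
The plan is to extend the equivalence from the closed case to the dependent setting by writing down both maps explicitly and checking that they are mutually inverse using the $\beta$- and $\eta$-rules of $\rformsym$. Define the forward map by copattern matching:
\[
\Phi(f)(a, \ctxlock{L}) :\defeq \rget{f}\bigl(\lambda t.\, a\substuck{t}{L}\bigr).
\]
Under the opened lock $\ctxlock{L}$ the variable $a$ becomes locked, so for each $t:\Tiny$ the stuck counit $a\substuck{t}{L}$ inhabits $A\substuck{t}{L}$; the abstraction $\lambda t.\, a\substuck{t}{L}$ therefore has the exact type expected as the argument of $\rget{f}$, and the subsequent application yields a term of $\rget{B}$.

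For the candidate inverse $\Psi : (A \to \rform{L}\rget{B}) \to \rform{L}((\prd{t:\Tiny} A\substuck{t}{L}) \to \rget{B})$, set
\[
\Psi(g)(\ctxlock{L}, h) :\defeq \bigl(g\substuck{i}{L}(h(i))\bigr)(\rbind{i}),
\]
using the $\Tiny$-variable $i$ bound by the outer amazing application to both unlock $g$ and evaluate $h$. After the counit $\substuck{i}{L}$ distributes through the arrow in the codomain of $g$, the intermediate term $g\substuck{i}{L}(h(i))$ inhabits the appropriate substitution instance of $\rform{L}\rget{B}$, and the outer $(\cdot)(\rbind{i})$ then produces the required term of $\rget{B}$.

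To verify $\Phi\circ\Psi = \id$ and $\Psi\circ\Phi = \id$, compute the round-trips. In $\Phi(\Psi(g))(a, \ctxlock{L})$, firing $\rformsym$-$\beta$ on the $\rintro{L}$ packaged inside $\Psi(g)$ together with ordinary function $\beta$ reduces the body to $\bigl(g(a)\substuck{i}{L}\bigr)(\rbind{i})$, which $\rformsym$-$\eta$ collapses to $g(a)$. For $\Psi(\Phi(f))$, a symmetric sequence of $\beta$-reductions followed by one outer use of $\rformsym$-$\eta$ recovers $f$.

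The main obstacle will be the bookkeeping of locks and counits through these reductions: each step has to track which variables are currently locked and how the admissible counit and unit operations propagate through $\lambda$-abstraction, application, $\rintro{L}$ and $\rbind{i}$, and in particular one must confirm that $(a\substuck{t}{L})\sublock{i}{L}$ performs the substitution $a[t/i]$ that the $\rformsym$-$\beta$ rule demands. Unlike in the closed case, $A$ and $\rget{B}$ carry genuine dependency on the ambient context, so these rewrites act non-trivially on types of intermediate expressions and must be checked not to interfere with the term-level computation.
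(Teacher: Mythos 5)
Your maps are exactly the specialization of the paper's $\Phi$ and $\Phi^{-1}$ from the subsequent \cref{prop:dep-adj} (of which this proposition is the non-dependent case), and the $\beta$/$\eta$ round-trip verification you sketch is the same strategy the paper uses there. The intermediate form you identify for $\Phi(\Psi(g))$ is correct, and your final remark rightly flags the counit/unit bookkeeping as the substantive content of the computation, which the paper carries out in full for the dependent version.
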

And in fact, we can also let $B$ depend on $A$. In full generality:
\begin{proposition}\label{prop:dep-adj}
  For any $A : \univ$ and $B : A \to \rformu \univ$, there is an
  equivalence
  \begin{align*}
    \Phi : \rform{K} \left(\prd{h:\prd{t:\Tiny} A\substuck{t}{L}} B\substuck{i}{L}(h(i), \rbind{i})\right) \to \prd{a:A} \rform{L} \rget{B(a)}
  \end{align*}
  given by
  \begin{alignat*}{2}
    \Phi(r, a, \ctxlock{L}) &:\defeq r\substuck{i}{L}(\rbind{i}, \lambda t. a \substuck{t}{L}) \\
    \Phi^{-1}(f, \ctxlock{K}, h) &:\defeq f\substuck{k}{K}(h(k), \rbind{k})
  \end{alignat*}
\end{proposition}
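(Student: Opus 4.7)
The plan is to verify directly that $\Phi$ and $\Phi^{-1}$ are judgemental inverses. Since both maps are given by explicit formulas, the proof reduces to computing the two round trips using the $\rformsym$-beta and $\rformsym$-eta rules introduced in this section, together with the counit and unit commutation rules from \cref{sec:contexts} and ordinary function $\beta\eta$.

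Before computing, I would confirm well-typedness by tracing the lock discipline. In $\Phi(r,a,\ctxlock{L}) :\defeq r\substuck{i}{L}(\rbind{i}, \lambda t. a\substuck{t}{L})$, introducing $\ctxlock{L}$ locks both the free variable $r$ and the bound $a$; the fresh $i:\Tiny$ from $\rbind{i}$ unlocks $r$ via $\substuck{i}{L}$, and the inner $\lambda t$ supplies the function of type $\prd{t:\Tiny} A\substuck{t}{L}$ that $r$ expects as its first argument. A dual check handles $\Phi^{-1}$.

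For $\Phi^{-1}\circ\Phi = \id$, I would start from $r$ and expand $\Phi^{-1}(\Phi(r))$. Since $\Phi(r)$ begins with $\lambda a$ and $\rintro{L}$, the outer $\substuck{k}{K}$ introduced by $\Phi^{-1}$ commutes past these binders and attaches to the free occurrence of $r$ alongside its existing $\substuck{i}{L}$, producing $r\stubra{\subkeyo{i}{L},\subkeyo{k}{K}}$. Function $\beta$ then substitutes $h(k)$ for $a$; $\rformsym$-beta cancels the inner $\rintro{L}$ against the outer $\rbind{k}$; and the resulting unit substitution collapses against the $\substuck{\cdot}{L}$ annotation using the unit-on-stuck-counit rules at the end of \cref{sec:contexts}. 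What remains is exactly the $\rformsym$-eta plus function-$\eta$ expansion of the original $r$. The symmetric computation verifies $\Phi\circ\Phi^{-1} = \id$, with the roles of $\lockn{L}$ and $\lockn{K}$ swapped.

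The main obstacle is bookkeeping, not any conceptual subtlety. One must apply the general counit-on-stuck-counit formula from \cref{sec:contexts} carefully, keeping track of which fresh variables are bound by $\rbind{\cdot}$ versus $\lambda$, and verify that each triggered unit substitution correctly dispatches the matching stuck annotation. Since each $\rintro{\cdot}$ from one map is paired with an $\rbind{\cdot}$ from the other, every piece of temporary syntax has a designated annihilator, and the computation terminates with the $\eta$-expansion of the starting term.
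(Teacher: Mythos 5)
Your proposal takes essentially the same route as the paper: $\eta$-expand the target of each round trip, push the admissible counit past binders until it lands on the free $r$ (resp.\ $f$), fire $\rformsym$-beta to produce a unit substitution, and let the unit collapse the matching stuck counit, ending at the $\rformsym$-$\eta$ plus function-$\eta$ expansion of the starting term. The paper simply carries out the bookkeeping you defer to, tracking the key positions explicitly (e.g.\ obtaining $r\stubra{\subkeyo{k}{K},\subkeyo{i}{L}}(\rbind{i},\lambda t.\,h(k)\substuck{t}{L})\sublock{k}{L}$ as the intermediate form), but the plan and the rules invoked are identical.
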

\begin{proof}
  All types involved have definitional $\eta$-principles, so this is a
  matter of crunching through the composites in both directions.

  First, we $\eta$-expand $\Phi^{-1}(\Phi(r))$ to $\rintro{J} \lambda h. \Phi^{-1}(\Phi(r\substuck{j}{J}), \rbind{j}, h)$ and simplify the body:
  \begin{align*}
    &\Phi^{-1}(\Phi(r\substuck{j}{J}), \rbind{j}, h) \\
    &\defeq \Phi(r\substuck{j}{J})\subkey{k}{K}\sublock{j}{K}(h(k), \rbind k)
    && \text{(Definition of $\Phi^{-1}$)} \\
    &\defeq \Phi(r\substuck{k}{J})(h(k), \rbind k)
    && \text{(Computing $\sublock{j}{K}$)} \\
    &\defeq r\stubra{\subkeyo{k}{K},\subkeyo{i}{L}}(\rbind{i}, \lambda t. h(k) \substuck{t}{L}) \sublock{k}{L}
    && \text{(Definition of $\Phi$)} \\
    &\defeq r\substuck{i}{K}(\rbind{i}, \lambda t. h(t))
    && \text{(Computing $\sublock{k}{L}$)} \\      
    &\defeq r\substuck{i}{K}(\rbind{i}, h)
    && \text{($\eta$ for functions)}
  \end{align*}
  which is precisely the $\eta$-expansion of $r$.

  The other way, $\Phi(\Phi^{-1}(f))$ expands to $\lambda a. \rintro{J} \Phi (\Phi^{-1}(f\substuck{j}{J}, a, \rbind{j}))$ and:
  \begin{align*}
    &\Phi (\Phi^{-1}(f\substuck{j}{J}), a\substuck{j}{J}, \rbind{j}) \\
    &\defeq \Phi^{-1}(f\substuck{j}{J})\substuck{i}{L}(\rbind{i}, \lambda t. a\substuck{j}{J} \substuck{t}{L})\sublock{j}{L}
    && \text{(Definition of $\Phi$)} \\
    &\defeq \Phi^{-1}(f\substuck{i}{J})(\rbind{i}, \lambda t. a\substuck{t}{J})
    && \text{(Computing $\sublock{j}{L}$)} \\
    &\defeq f\stubra{\subkeyo{i}{J}, \subkeyo{k}{K}}\sublock{i}{K}((\lambda t. a \substuck{t}{J})(k), \rbind{k})
    && \text{(Definition of $\Phi^{-1}$)} \\
    &\defeq f\substuck{k}{J}((\lambda t. a \substuck{t}{J})(k), \rbind{k})
    && \text{(Computing $\sublock{i}{K}$)} \\
    &\defeq f\substuck{k}{J}(a \substuck{k}{J}, \rbind{k})
    && \text{($\beta$ for functions)}
  \end{align*}
  is the $\eta$-expansion of $f$.

\end{proof}

\begin{remark}
  Internal equivalences of this kind for an arbitrary adjoint modality
  have been defined in MTT, where the modality is instead a positive
  type-former, see~\cite[Proposition 3.4]{transpension}
  and~\cite[Section 10.4]{mtt}. The fact that this internal
  formulation circumvents the `no-go' theorem of~\cite{lops} was also
  noted in~\cite[Section 10.1]{transpension}.
\end{remark}

\section{Constructions}\label{sec:constructions}

\subsection{Higher-Dimensional Induction}\label{sec:induction}
Since we have given $\Tiny \to -$ a right adjoint, it now preserves
colimits. In particular, we support the same kind of
``higher-dimensional pattern matching'' on functions out of $\Tiny$,
as suggested in~\cite[\S 2.4]{transpension}. In this section we derive
an induction principle for functions of $\Tiny$ into coproducts,
thought we expect easy generalisations to other inductive types.

\begin{proposition}\label{prop:higher-match}
  Suppose types $A, B : \univ$ and a type family \[P : \rform{L}\left( \left( \prd{t : \Tiny} A\substuck{t}{L} + B\substuck{t}{L} \right) \to \univ\right).\]
  Then there is a map 
  \begin{align*}
    \mathsf{hind} : &\,\rform{L} \left(\prd{l : \prd{t : \Tiny} A\substuck{t}{L}} \rget{P}(\inl \circ l) \right) \\
    \times &\,\rform{L} \left(\prd{r : \prd{t : \Tiny} B\substuck{t}{L}} \rget{P}(\inr \circ r) \right) \\
    \to &\,\rform{L} \left(\prd{f : \prd{t : \Tiny} A\substuck{t}{L} + B\substuck{t}{L}} \rget{P}(f) \right)
  \end{align*}
  such that $\mathsf{hind}(g, h)$ computes via
  \begin{alignat*}{2}
    &\mathsf{hind}(g, h)(\ctxlock{L}, \lambda t. \inl(\rget{l}(t))) &&\defeq \rget{g}(\rget{l}) \\
    &\mathsf{hind}(g, h)(\ctxlock{L}, \lambda t. \inr(\rget{r}(t))) &&\defeq \rget{h}(\rget{r})
  \end{alignat*}
  for any $l : \rform{L}(\prd{t:T}A\substuck{t}{L})$ and $r : \rform{L}(\prd{t:T} B\substuck{t}{L})$.
\end{proposition}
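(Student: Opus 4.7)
The plan is to apply \cref{prop:dep-adj} three times to transpose both the source and target types of $\mathsf{hind}$, reducing the construction to ordinary case analysis on $A + B$. Instantiating \cref{prop:dep-adj} with base type $A$ and family $a \mapsto \rintro{L}{\rget{P}(\lambda t. \inl(a\substuck{t}{L}))}$ yields an equivalence
\[ \rform{L}{\left(\prd{l:\prd{t:\Tiny}A\substuck{t}{L}}\rget{P}(\inl \circ l)\right)} \equiv \prd{a:A}\rform{L}{\rget{P}(\lambda t. \inl(a\substuck{t}{L}))}, \]
analogously for the $B$-summand, and for the target, with base $A + B$ and family $x \mapsto \rintro{L}{\rget{P}(\lambda t. x\substuck{t}{L})}$, an equivalence
\[ \rform{L}{\left(\prd{f:\prd{t:\Tiny}(A\substuck{t}{L}+B\substuck{t}{L})}\rget{P}(f)\right)} \equiv \prd{x:A+B}\rform{L}{\rget{P}(\lambda t. x\substuck{t}{L})}. \]

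After transposition, the task reduces to producing a dependent map on $A + B$ from dependent maps on $A$ and on $B$, which is ordinary case analysis. Writing $\Phi$ and $\Phi^{-1}$ for the maps of \cref{prop:dep-adj} (leaving the choice of base implicit), we set
\[ \mathsf{hind}(g, h) :\defeq \Phi^{-1}\left(\lambda x.\, \case(x;\, a.\, \Phi(g)(a);\, b.\, \Phi(h)(b))\right). \]
When $x = \inl(a)$, the expected type $\rform{L}{\rget{P}(\lambda t. x\substuck{t}{L})}$ of the branch reduces judgementally to $\rform{L}{\rget{P}(\lambda t. \inl(a\substuck{t}{L}))}$, matching the type of $\Phi(g)(a)$, and symmetrically for $\inr$, so the two branches have the correct types.

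The computation rules then follow by unfolding $\Phi^{-1}$ as in the proof of \cref{prop:dep-adj} and firing the $\beta$-rule for $\case$: on input $f = \lambda t. \inl(\rget{l}(t))$, the case analysis selects the $\inl$ branch $\Phi(g)(\rget{l})$, and a further unfolding of $\Phi$ recovers $\rget{g}(\rget{l})$; the $\inr$ case is symmetric. The main obstacle is purely bookkeeping: $P$ is packaged as $\rformu$ of a function type rather than directly as a family $A + B \to \rformu\univ$, so the dependent family supplied to \cref{prop:dep-adj} must be constructed by hand, and one must check that $\substuck{t}{L}$ commutes past $\inl$ and $\inr$ as expected. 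Once that is in place, no new ideas are required beyond the $\beta$-rules already established in the proof of \cref{prop:dep-adj}.
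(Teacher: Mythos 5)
Your proof is correct and takes essentially the same approach as the paper: transpose both sides via \cref{prop:dep-adj}, perform ordinary case analysis on $A + B$ in the middle, and transpose back. The paper goes one step further and unfolds $\Phi^{-1}$ of the case split to give the explicit term $\mathsf{hind}(g,h)(\ctxlock{L}, f) :\defeq \mathsf{case}_+(f(i),\, a.\,\rintro{K}\rget{g}(\lambda t.\,a\substuck{t}{K}),\, b.\,\rintro{K}\rget{h}(\lambda t.\,b\substuck{t}{K}))(\rbind i)$, then verifies the computation rules by direct calculation rather than by appealing to the round-trips of \cref{prop:dep-adj}.
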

If we like, we can always extract the underlying function
$\mathsf{hind}(g, h)(\rbind i) : \prd{f:\Tiny \to A + B} \rget{P}(f)$ of
the result.
\begin{proof}
  There is a chain of equivalences
  \begin{align*}
    &\rform{L} \left(\prd{l : \prd{t : \Tiny} A\substuck{t}{L}} \rget{P}(\inl \circ l) \right) \times \rform{L} \left(\prd{r : \prd{t : \Tiny} B\substuck{t}{L}} \rget{P}(\inr \circ r) \right) \\
    &\quad \text{(Dependent adjointness)} \\
    &\equiv \left(\prd{a : A} \rform{L} \rget{P}(\lambda i. \inl(a\substuck{i}{L})) \right) \times \left(\prd{b : B} \rform{L} \rget{P}(\lambda i. \inr(b\substuck{i}{L})) \right) \\
    &\quad \text{(Universal property of $+$)} \\
    &\equiv \prd{s : A + B} \rform{L} \rget{P}(\lambda i. s\substuck{i}{L}) \\
    &\quad \text{(Dependent adjointness)} \\
    &\equiv \rform{L} \left(\prd{f : \prd{t : \Tiny} A\substuck{t}{L} + B\substuck{t}{L}} \rget{P}(f) \right)
  \end{align*}
  As a term, the forward map is 
  \begin{align*}
    \mathsf{hind}(g, h)(\ctxlock{L}, f) :\defeq \mathsf{case}_+(f(i), &a. \rintro{K} \rget{g}(\lambda t. a\substuck{t}{K}), \\
    &b. \rintro{K} \rget{h}(\lambda t. b\substuck{t}{K}) )(\rbind i)
  \end{align*}
  We can show directly that this map has the desired computational properties:
  \begin{align*}
    &\mathsf{hind}(g, h)(\ctxlock{L}, \lambda t. \inl(\rget{l}(t))) \\
    &\defeq \mathsf{case}_+(\inl(\rget{l}(i)), a. \rintro{K} \rget{g}(\lambda t. a\substuck{t}{K}), b. \rintro{K} \rget{h}(\lambda t. b\substuck{t}{K}) )(\rbind i) \\
    &\defeq (\rintro{K} \rget{g}(\lambda t. \rget{l}\substuck{t}{K}(i\substuck{t}{K})))(\rbind i) \\
    &\defeq \rget{g}(\lambda t. \rget{l}\substuck{t}{K}(i\substuck{t}{K}))\sublock{i}{K} \\
    &\defeq \rget{g}(\lambda t. \rget{l}(t)) \\
    &\defeq \rget{g}(\rget{l})
  \end{align*}
  and similarly when evaluated at $\lambda t. \inr(\rget{r}(t))$.
\end{proof}
The upshot of the above is that we support higher-dimension pattern
matching of the following form: to define a function
$p : \rform{L} \left(\prd{f : \prd{t : \Tiny} A\substuck{t}{L} +
    B\substuck{t}{L}} \rget{P}(f) \right)$, it suffices to give cases
\begin{alignat*}{2}
  &p(\ctxlock{L}, \lambda t. \inl(\rget{l}(t))) &&:\defeq \dots \\
  &p(\ctxlock{L}, \lambda t. \inr(\rget{r}(t))) &&:\defeq \dots 
\end{alignat*}

An immediate application is:
\begin{proposition}
  For any types $A, B : \univ$, the map
  \begin{align*}
    \mathsf{split}_{A + B} : \left((\Tiny \to A) + (\Tiny \to B)\right) \to (\Tiny \to A + B)
  \end{align*}
  defined by case-splitting is an equivalence.  
\end{proposition}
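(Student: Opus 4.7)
The plan is to construct an inverse $\mathsf{unsplit}$ to $\mathsf{split}_{A+B}$ using the higher-dimensional pattern matching principle of \cref{prop:higher-match}, and then discharge both round-trip equations by invoking that same principle a second time.

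First, I would apply \cref{prop:higher-match} to the constant predicate $P$ taking value $(\Tiny \to A) + (\Tiny \to B)$ (wrapped under $\rformsym$ with the appropriate stuck counits decorating $A$ and $B$) and cases
\[ g := \rintro{L}(\lambda l.\, \inl(l)), \qquad h := \rintro{L}(\lambda r.\, \inr(r)). \]
Extracting via $\mathsf{hind}(g, h)(\rbind i)$ produces the candidate inverse $\mathsf{unsplit} : (\Tiny \to A+B) \to (\Tiny \to A) + (\Tiny \to B)$. The $\beta$-rules included in the statement of \cref{prop:higher-match} immediately give the defining equations
\[ \mathsf{unsplit}(\lambda t.\, \inl(l(t))) \equiv \inl(l), \qquad \mathsf{unsplit}(\lambda t.\, \inr(r(t))) \equiv \inr(r). \]

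To verify $\mathsf{unsplit} \circ \mathsf{split}_{A+B} = \mathsf{id}$, I would case-split on the input in the ordinary coproduct: in each branch, unfolding $\mathsf{split}_{A+B}$ followed by the relevant $\mathsf{unsplit}$-rule definitionally returns the input, so $\mathsf{refl}$ suffices. For the more substantive direction $\mathsf{split}_{A+B} \circ \mathsf{unsplit} = \mathsf{id}$, the goal $\prd{f : \Tiny \to A+B} \mathsf{split}_{A+B}(\mathsf{unsplit}(f)) = f$ is itself of the shape handled by \cref{prop:higher-match}, taking the predicate $P(f) := \mathsf{split}_{A+B}(\mathsf{unsplit}(f)) = f$. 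Each of the two required cases reduces: applying $\mathsf{unsplit}$ at $\lambda t.\, \inl(l(t))$ gives $\inl(l)$ by the $\beta$-rule, and then $\mathsf{split}_{A+B}$ returns the original $\lambda t.\, \inl(l(t))$ definitionally, so $\mathsf{refl}$ inhabits the case, and similarly for $\inr$.

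The main subtlety, and the only place care is needed, is packaging each predicate $P$ as a genuinely rooted family of type $\rform{L}\bigl((\prd{t:\Tiny} A\substuck{t}{L} + B\substuck{t}{L}) \to \univ\bigr)$ so that \cref{prop:higher-match} is actually applicable; this amounts to decorating the occurrences of $A$, $B$, $\mathsf{split}_{A+B}$, and $\mathsf{unsplit}$ inside $P$ with the appropriate stuck counits $\substuck{i}{L}$, which is routine bookkeeping rather than a genuine obstacle. The underlying reason this works is of course semantic: $\Tiny \to -$ is a left adjoint and hence preserves the coproduct $A + B$, and the present proof simply internalises that fact using the tools built in \cref{sec:induction}.
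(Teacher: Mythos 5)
Your proposal is correct and follows essentially the same route as the paper: construct $\mathsf{unsplit}$ by applying \cref{prop:higher-match} with the constant motive valued in $(\prd{t:\Tiny} A\substuck{t}{L}) + (\prd{t:\Tiny} B\substuck{t}{L})$, check $\mathsf{unsplit} \circ \mathsf{split}$ by ordinary case-analysis on the coproduct, and check $\mathsf{split} \circ \mathsf{unsplit}$ by a second application of higher-dimensional pattern matching with the pointwise equality as motive. The only presentational difference is that the paper writes the second motive pointwise (as $\prd{t:\Tiny}\dots(t) = f(t)$) and then invokes function extensionality, whereas you state it as a function equality directly; this is immaterial.
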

\begin{proof}
  The inverse map is definable using this new pattern matching, with
  motive the constant type family
  \begin{align*}
    P(\ctxlock{L}, i) :\defeq \left(\prd{t : \Tiny} A\substuck{t}{L}\right) + \left(\prd{t : \Tiny} B\substuck{t}{L}\right)
  \end{align*}
  The cases we have to provide are:
  \begin{alignat*}{2}
    &\mathsf{unsplit}_{A + B}(\ctxlock{L}, \lambda t. \inl(\rget{l}(t))) &&:\defeq \inl(\rget{l}) \\
    &\mathsf{unsplit}_{A + B}(\ctxlock{L}, \lambda t. \inr(\rget{r}(t))) &&:\defeq \inr(\rget{r})
  \end{alignat*}
  In one direction, the composite computes definitionally:
  \begin{align*}
    \mathsf{unsplit}_{A + B}(\mathsf{split}_{A + B}(\inl(f)))
    \defeq \mathsf{unsplit}_{A + B}(\lambda i. \inl(f(i)))
    \defeq \inl(f)
  \end{align*}
  In the other direction, we use higher-dimensional pattern matching again, this time with motive
  \begin{align*}
    P'(\ctxlock{L}, f) :\defeq \prd{t:\Tiny} \mathsf{split}_{A\substuck{t}{L} + B\substuck{t}{L}}\left(\mathsf{unsplit}_{A\substuck{t}{L} + B\substuck{t}{L}}(f)\right)(t) = f(t)
  \end{align*}
  In the branches, the left-hand side of the path computes away:
  \begin{align*}
    \mathsf{split}(\mathsf{unsplit}(\lambda t. \inl(\rget{l}(t)))) 
    \defeq \mathsf{split}(\inl(\rget{l}))
    \defeq \lambda t. \inl(\rget{l}(t))
  \end{align*}
  and similarly for $\inr$, so we may inhabit the type family by
  \begin{alignat*}{2}
    \mathsf{inv}_{A + B}(\ctxlock{L}, \lambda t. \inl(\rget{l}(t)), t) &:\defeq \refl{\inl(\rget{l}(t))} \\
    \mathsf{inv}_{A + B}(\ctxlock{L}, \lambda t. \inr(\rget{r}(t)), t) &:\defeq \refl{\inr(\rget{r}(t))}    
  \end{alignat*}
  proving (via function extensionality) that $\mathsf{unsplit}_{A+B}$ is
  indeed an inverse.
\end{proof}

Notice that we did not need the input types $A$ and $B$ to be guarded by
$\rformu$ in the above Proposition. With a little more care, we could
obtain a more refined map
\begin{align*}
  \rform{L}\left(\left((\prd{t:\Tiny} A\substuck{t}{L}) + (\prd{t:\Tiny} B\substuck{t}{L})\right) \to (\prd{t:\Tiny} A\substuck{t}{L} + B\substuck{t}{L})\right)
\end{align*}
whence the original equivalence may be extracted.

\subsection{Transpension}\label{sec:transpension}

An equivalent characterisation of a tiny object in a 1-topos, due to
Freyd~\cite[Proposition 1.2]{yetter:tiny}, is an object $\Tiny$ such
that the dependent product functor
$\Pi_\Tiny : \mathcal{E}/\Tiny \to \mathcal{E}$ has a right adjoint
$\nabla_\Tiny : \mathcal{E} \to \mathcal{E}/\Tiny$. This operation was
given the name \emph{transpension} in~\cite{dependable-atomicity,
  transpension}.

If $\Tiny$ is tiny in the ordinary sense, then in a 1-topos this
adjoint $\nabla_\Tiny$ can be defined by sending each object $B$ to
the left map in the pullback
\[
  \begin{tikzcd}
    \bullet \ar[r] \ar[d, "\nabla_\Tiny B", swap] \arrow[dr, phantom, "\lrcorner", very near start] & \rformu \left(\sm{P : \Omega} P \to B \right) \ar[d, "\rformu \proj_1"] \\
    \Tiny \ar[r, "{(\chi_{\id_\Tiny})^\vee}" swap] & \rformu \Omega
  \end{tikzcd}
\]
In the univalent setting this construction is no longer correct, but
simply replacing $\Omega$ with the universe works: for
$B : \rformu \univ$, define the type family
$\rdepform{-}{B} : \Tiny \to \univ$ by the pullback
\[
  \begin{tikzcd}
    \sm{t : \Tiny} \rdepform{t}{B} \ar[r] \ar[d] \arrow[dr, phantom, "\lrcorner", very near start] & \rformu \left(\sm{X : \univ} X \to \rget{B} \right) \ar[d, "\rformu \proj_1"] \\
    \Tiny \ar[r, "{(-=\id_\Tiny)^\vee}" swap] & \rformu \univ
  \end{tikzcd}
\]
where the map along the bottom is the transpose of
$(\lambda f. f = \id_\Tiny) : (\Tiny \to \Tiny) \to \univ$, namely
\begin{align*}
  t \mapsto \rintro{K}{\prd{s : \Tiny} t\substuck{s}{K} = s}
\end{align*}

We can simplify this description a fair amount. Because $\rformu$
commutes with $\Sigma$-types, the map on the right is equivalent to
the projection
\begin{align*}
  \proj_1 : \sm{X : \rformu \univ} \rform{K}{(\rget{X} \to \rget{B})} \to \rformu \univ
\end{align*}
The fibre of the pullback over a fixed $t : \Tiny$ is then easy to
calculate: it is equivalent to the type of the second component of the sum:
\begin{align*}
  &\rform{L}{(\rget{X} \to \rget{B})}[(\rintro{K}{\prd{s : \Tiny} t\substuck{s}{K} = s})/X] \\
  &\defeq \rform{L}{\left( (\prd{s : \Tiny} t\substuck{s}{L} = s) \to \rget{B} \right)}
\end{align*}
and so we take this as our definition.
\begin{definition}
  For $B : \rformu \univ$, define the \emph{transpension} $B^{1/-} : \Tiny \to \univ$ to be
  \begin{align*}
    \rdepform{t}{B} :\defeq \rform{L}{\left( (\prd{s : \Tiny} t\substuck{s}{L} = s) \to \rget{B} \right)}
  \end{align*}
\end{definition}

The whole point of this construction was to be a right adjoint to the dependent product with $\Tiny$, and indeed it is:
\begin{proposition}
  For $A : \Tiny \to \univ$ and $B : \rformu \univ$
  \begin{align*}
    \rform{L} \left( \left(\prd{s : \Tiny} A\substuck{s}{L}(s)\right) \to \rget{B} \right) \equiv \left(\prd{t : \Tiny} A(t) \to \rdepform{t}{B} \right)
  \end{align*}
\end{proposition}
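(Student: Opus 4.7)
Following the pattern of the proof of \cref{prop:dep-adj}, I construct an explicit pair of maps and verify that they are mutually inverse by $\beta\eta$-reduction. To avoid lock-name clashes, write $K$ for the outer lock on the left-hand side and $L$ for the lock implicit in the definition of $\rdepform{t}{B}$.

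The forward direction sends $f$ on the left to
\[
\Psi(f)(t, a, \ctxlock{L}, p) :\defeq \rget{f}\bigl(\lambda s.\, \transp_{A\substuck{s}{L}}(p(s),\, a\substuck{s}{L})\bigr).
\]
Since $a : A(t)$ we have $a\substuck{s}{L} : A\substuck{s}{L}(t\substuck{s}{L})$, and transporting along $p(s) : t\substuck{s}{L} = s$ yields an element of $A\substuck{s}{L}(s)$, matching the argument type expected by $\rget{f}$. The backward direction sends $F$ on the right to
\[
\Psi^{-1}(F)(\ctxlock{K}, h) :\defeq F\substuck{i}{K}(i,\, h(i))(\rbind{i},\, \lambda s.\, \refl{s}).
\]
The single variable $i$ bound by $\rbind{i}$ appears in three positions: as the stuck counit on $F$ for lock $K$, as the $t$-argument to $F$, and as the amazing-elim argument that peels off the transpensional $\rform{L}$. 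The clever step is that, once we amazing-evaluate, the path-family type $\prd{s:\Tiny} i\substuck{s}{L} = s$ simplifies under $\sublock{i}{L}$ to $\prd{s:\Tiny} s = s$, which is trivially inhabited by $\lambda s.\, \refl{s}$.

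Verifying $\Psi \circ \Psi^{-1} = \id$ and $\Psi^{-1} \circ \Psi = \id$ is then a direct $\beta\eta$-calculation, in the same style as \cref{prop:dep-adj}. The crucial observation in each direction is that transport along $\refl$ is definitionally the identity: in $\Psi \circ \Psi^{-1}$, the inner $\refl{s}$ supplied by the backward map makes the outer transport trivial, and $\eta$-expansion of $F$ (as a $\Pi$-type and as a $\rformu$-introduction) finishes the calculation; in $\Psi^{-1} \circ \Psi$, the transported function that $\Psi$ feeds into $\rget{f}$ is plugged into the amazing evaluation from $\Psi^{-1}$, where the path component is $\refl$ so the transport again collapses, and $\eta$ for $\rformu$ and $\Pi$ reassembles $f$.

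The main obstacle is the bookkeeping: tracking which variable is bound by which $\rbind$, which stuck counit clicks with which unit, and in particular how $B\substuck{i}{K}$ interacts with the amazing-elim's $\sublock{i}{L}$ (since the stuck counit sits on a lock disjoint from the one being eliminated). This is qualitatively the same kind of juggling as in the proof of \cref{prop:dep-adj}, with the added wrinkle that identity types and $\refl$-transport now appear in the calculation; once one is convinced that the path-family reduces to $\prd{s:\Tiny} s=s$ after the amazing evaluation, the rest of the proof is routine.
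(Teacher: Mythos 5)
Your proposal takes a genuinely different route from the paper. The paper massages the domain of the $\rform{L}$-type through a short chain of propositional equivalences (contractible pair, universal property of $\Sigma$), arranges things into the shape required by \cref{prop:dep-adj}, applies it as a black box, and simplifies. You instead build explicit forward and backward maps using $\transp$ and $\refl{}$, which is a sensible and potentially illuminating alternative.

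The definitions of $\Psi$ and $\Psi^{-1}$ look right, and the key observation --- that after the amazing elimination the path family $\prd{s:\Tiny} i\substuck{s}{L} = s$ collapses to $\prd{s:\Tiny} s = s$, so $\lambda s.\,\refl{s}$ is a valid argument --- is correct and is really the heart of why transpension factors through the non-dependent right adjoint. The round trip $\Psi^{-1}\circ\Psi = \id$ does indeed come out by $\beta\eta$: tracing through, one lands on $f\substuck{n}{K}(\rbind{n})(\lambda s.\,\transp(\refl{s}, h(s)))$, and $\transp$ along $\refl{}$ plus $\eta$ for $\Pi$ and $\rformu$ reassembles $f$.

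The gap is in your claim that $\Psi\circ\Psi^{-1} = \id$ is also a direct $\beta\eta$-calculation. Tracing through that composite one finds
\[
\Psi(\Psi^{-1}(F))(t, a, \ctxlock{L}, p) \defeq F\substuck{i}{L}\bigl(i,\ \transp(p(i), a\substuck{i}{L})\bigr)(\rbind{i})(\lambda s.\,\refl{s}),
\]
whereas the $\eta$-expansion of $F(t,a)(\ctxlock{L}, p)$ is
\[
F\substuck{i}{L}\bigl(t\substuck{i}{L},\ a\substuck{i}{L}\bigr)(\rbind{i})(p).
\]
These argument tuples are not judgementally equal: $i$ and $t\substuck{i}{L}$ are connected only by the path $p(i) : t\substuck{i}{L} = i$, and the second and third components must be identified over that path. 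This is exactly the propositional ``contractible pair'' equivalence that the paper invokes --- the statement that $\bigl(\sm{t:\Tiny}\sm{a:A(t)}(t = s)\bigr) \equiv A(s)$ has a non-definitional round trip, witnessed by singleton contractibility. Your sentence ``the transported function that $\Psi$ feeds into $\rget{f}$ is plugged into the amazing evaluation from $\Psi^{-1}$, where the path component is $\refl{}$ so the transport again collapses'' conflates the $\refl{}$ supplied by $\Psi^{-1}$ (which goes to $F$'s path argument) with the transport in $\Psi$ (which is along the arbitrary $p$ given to the composite); they do not cancel each other judgementally. To repair the proof you would need to carry out an explicit path-induction or transport argument identifying $(t\substuck{i}{L}, a\substuck{i}{L}, p)$ with $(i, \transp(p(i), a\substuck{i}{L}), \lambda s.\,\refl{s})$ along $p(i)$, which is a fine thing to do --- but it is no longer ``routine $\beta\eta$,'' and at that point the chain-of-equivalences presentation in the paper is arguably cleaner because it delegates this exact step to a named equivalence.
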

\begin{proof}
  To apply dependent adjointness (\cref{prop:dep-adj}) we
  need to massage the interior of the $\rform{L}$ a little. Let $P$
  denote the type family
  \begin{align*}
    &P : \left(\sm{t : \Tiny} A(t)\right) \to \rformu{\univ} \\
    &P((t, a), \ctxlock{K}) :\defeq (\prd{s:\Tiny} t\substuck{s}{K} t = s) \to \rget{B}
  \end{align*}
  Then
  \begin{align*}
    &\rform{L} \left(\left(\prd{s:\Tiny} (A\substuck{s}{L})(s)\right) \to \rget{B} \right) \\
    &\equiv \rform{L} \left(\left(\prd{s:\Tiny}\sm{t : \Tiny} {a : A\substuck{s}{L}(t)} (t = s) \right)\to \rget{B} \right) && \text{(Contractible pair)} \\
    &\equiv \rform{L} \left(\prd{f:\prd{t':\Tiny} \sm{t : \Tiny} A\substuck{t'}{L}(t)} \left(\prd{s:\Tiny} \proj_1(f(i)) = s\right) \to \rget{B} \right)  && \text{(Univ.\ property of $\Sigma$)} \\
    &\defeq \rform{L} \left(\prd{f:\prd{t':\Tiny} \sm{t : \Tiny} A\substuck{t'}{L}(t)} P\substuck{i}{L}(f(i), \rbind{i})\right) && \text{(Definition of $P$)} \\
    &\equiv \prd{(t,a) : \sm{t : \Tiny} A(t)} \rform{L}{(P\substuck{i}{L})(t\substuck{i}{L},a\substuck{i}{L}, \rbind{i})} && \text{(\cref{prop:dep-adj})} \\
    &\defeq \prd{(t,a) : \sm{t : \Tiny} A(t)} \rform{L}{\left((\prd{s : \Tiny} t \substuck{s}{L} = s) \to \rget{B}\right)} && \text{(Definition of $P$)} \\
    &\defeq \prd{(t,a) : \sm{t : \Tiny} A(t)} \rdepform{t}{B} && \text{(Definition of $B^{1/t}$)} \\
    &\equiv\prd{t : \Tiny} A(t) \to \rdepform{t}{B} && \text{(Currying)}
  \end{align*}
\end{proof}

\begin{corollary}
  $\rformu \rget{B} \equiv \left(\prd{t : \Tiny} \rdepform{t}{B} \right)$
\end{corollary}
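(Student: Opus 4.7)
The plan is to deduce this as an immediate specialisation of the preceding \cref{prop:adj}-like adjointness statement (the unnamed proposition that precedes this corollary) by choosing $A$ to be the constant type family valued at the unit type $\mathbf{1}$.

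More precisely, I would instantiate the equivalence
\[
  \rform{L} \left( \left(\prd{s : \Tiny} A\substuck{s}{L}\right) \to \rget{B} \right) \equiv \left(\prd{t : \Tiny} A(t) \to \rdepform{t}{B} \right)
\]
with $A :\defeq \lambda t.\, \mathbf{1}$. On the right-hand side, $\mathbf{1} \to \rdepform{t}{B} \equiv \rdepform{t}{B}$ (the contractibility/$\eta$ for the unit type), yielding exactly $\prd{t:\Tiny} \rdepform{t}{B}$. On the left-hand side, since $\mathbf{1}$ is closed the counit acts trivially, so $\mathbf{1}\substuck{s}{L} \defeq \mathbf{1}$, and $\prd{s:\Tiny}\mathbf{1}$ is contractible; hence $(\prd{s:\Tiny}\mathbf{1}) \to \rget{B} \equiv \rget{B}$, giving $\rform{L} \rget{B} \equiv \rformu \rget{B}$ after unfolding the notation.

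I do not expect any genuine obstacle here — the only thing to double-check is that the stray contractible domain can be stripped \emph{inside} the $\rform{L}$, which follows from function extensionality together with the fact that $\rformu$ preserves equivalences (being a right adjoint, or explicitly by functoriality established in \cref{sec:type}). Alternatively, one can avoid even this step by repeating the chain of equivalences in the proof of the preceding proposition with $A :\defeq \lambda t.\, \mathbf{1}$ inserted throughout and observing that each intermediate simplification still goes through.
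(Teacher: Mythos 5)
Your proposal is correct and takes essentially the same route as the paper: the paper's proof is, verbatim, to plug in $A(t) :\defeq 1$ in the preceding adjointness-for-transpension proposition, which is exactly the specialisation you describe (with the same minor simplifications on each side by contractibility of $1$ and $\prd{s:\Tiny}1$).
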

\begin{proof}
  Plug in $A(t) :\defeq 1$ in the above.
\end{proof}

A surprising fact is that the pullback of $\Tiny$ to any slice
$\mathcal{E}/X$ is itself also tiny~\cite[Theorem 1.4]{yetter:tiny},
but crucially, the right adjoint witnessing the tininess of
$\Tiny \times X$ in $\mathcal{E}/X$ is not simply the pullback of
$\rformsym$.

By a similar strategy as for transpension, we can describe the correct
right adjoint internally. Given a family $D : X \to \univ$ over a
closed type $X$, this right adjoint
$\rformslicesym{-} D : X \to \univ$ (to coin some notation) may be
calculated by
\[
  \rformslicesym{x} D :\defeq \rform{L}{\left(\prd{x' : X} \left( \prd{k : \Tiny} x' = x\substuck{k}{L}\right) \to D(x')\right)}
\]
We have not yet encountered a use for this construction.

\section{Applications}\label{sec:applications}

We end with some commentary on possible applications of our theory.

\paragraph{Synthetic Differential Geometry.} In future work with David
Jaz Myers, we intend to construct form classifiers of various kinds
internal to type theory and prove the properties asserted
axiomatically in \cite[\S 3]{david:modal-fracture} and \cite[Appendix
A]{david:microlinear}. In particular, we plan to give a synthetic
construction of the de Rham complex by constructing a long exact sequence
of form classifiers $\RR \to \Lambda^1 \to \Lambda^2 \to \cdots$.

In this setting there are many important tiny objects, for example,
not only is the infinitesimal interval $D$ tiny, but also each
``infinitesimal patch'' around the origin in $\RR^n$:
\begin{align*}
  D(n) :\defeq \{ x : \RR^n \mid \forall i j \,\, x_i x_j = 0 \}
\end{align*}

Our new judgemental rules and type former have a straightforward
extension to multiple tiny objects, with each a closed type. Each
$\ctxlock{}$ and $\rformu{}$ would be annotated with the corresponding
tiny object, and the rules for the $\rformu{}$-type would be
specialised for each tiny object.

We take this opportunity to propose some notation to be used when
there are multiple tiny objects at hand. On account of the copattern
notation, we suggest writing the amazing right adjoint as
$\Tiny \looparrowright A$ rather than ${}^\Tiny\mkern-12mu\rformu{A}$
or similar. This would associate to the right like ordinary function
types so that, for example, the unit of the adjunction would be
written
\begin{align*}
  &\eta_A : A \to \Tiny \looparrowright \Tiny \to A \\
  &\eta_A(a, \ctxlock{L}, t) :\defeq a \substuck{t}{L}
\end{align*}
with the argument list aligning nicely with the structure of the type.

\paragraph{Internal Models of Cubical Type Theory.} The present type
theory removes the necessity for axioms in the construction of a
universe of fibrations in~\cite[Theorem 5.2]{lops}. Let us briefly
repeat the construction to demonstrate where things become more
explicit.

The ambient type theory is intensional, predicative MLTT with function
extensionality and UIP: we therefore denote the ambient universe
$\Set$ rather than $\univ$ and aim to construct a new universe
$\univfib$ that classifies fibrations. There is an interval type
$\Int$ which is assumed to be tiny.

The main input data is a \emph{notion of composition structure}, which
is a function $\Compstr : (\Int \to \Set) \to \Set$ that parameterises
the notion of fibration:
\begin{align*}
  &\isFib : \prd{\Gamma : \Set}{A : \Gamma \to \Set} \Set \\
  &\isFib(\Gamma)(A) :\defeq \prd{p : \Int \to \Gamma} \Compstr(A \circ p) \\
  &\Fib : \Set \to \Set \\
  &\Fib(\Gamma) :\defeq \sm{A : \Gamma \to \Set} \isFib(\Gamma)(A)
\end{align*}


The construction proceeds the same way as in~\cite[Theorem 5.2]{lops},
with the universe classifying `crisp' fibrations constructed as the
pullback
\[
  \begin{tikzcd}
    \univfib \ar[r] \ar[d] \arrow[dr, phantom, "\lrcorner", very near start] & \rformu \sm{A : \Set} A \ar[d, "\rformu \proj_1"] \\
    \Set \ar[r, "\Compstr^\vee" swap] & \rformu \Set
  \end{tikzcd}
\]
We can simplify this description by commuting $\rformsym$ with
$\Sigma$ and substituting, a move not available when working with
the axiomatically asserted tininess. A more convenient definition of
$\univfib$ is then
\begin{align*}
  \univfib
  &:\defeq \sm{X : \Set} \rformu{\rget{A}} [\Compstr^\vee(X)/A] \\
  &\defeq \sm{X : \Set} \rform{L}{\Compstr(\lambda j. X\substuck{j}{L})}
\end{align*}



But inspecting this construction, things are much simpler if we
slightly change our notion of fibration. Surrounding $\Compstr$ with
$\rformsym$ in the body of $\isFib$, define:
\begin{align*}
  &\isNewFib : \prd{\Gamma : \Set}{A : \Gamma \to \Set} \Set \\
  &\isNewFib(\Gamma)(A) :\defeq \prd{g : \Gamma} \rform{L}{\Compstr(\lambda i. A\substuck{i}{L}(g\substuck{i}{L}))} \\
  &\NewFib : \Set \to \Set \\
  &\NewFib(\Gamma) :\defeq \sm{A : \Gamma \to \Set} \isNewFib(\Gamma)(A)
\end{align*}

Unchanged, the type $\univfib$ defined above classifies
\emph{non-crisp} fibrations of this tweaked kind, in that there is a
completely unrestricted equivalence between the type of functions
$\Gamma \to \univfib$ and $\NewFib(\Gamma)$.
We can construct an element of $\NewFib \univfib$ tautologically,
running the identity $\univfib \to \univfib$ backwards through that
equivalence. The proofs of fibrancy for each type constructor would
need to be re-done to account for the presence of $\rformsym$ in
$\isNewFib$, something we also leave for future work. There is the
intriguing prospect of these proofs amounting to a constructive
\emph{implementation} of cubical type theory internal to MLTT with a
tiny object, if enough of the non-computational axioms used
in~\cite{lops} can be avoided.

\paragraph{Higher-Order Abstract Syntax.}

\newcommand{\Viny}{\mathbb{V}}
\newcommand{\iTm}{\mathtt{Tm}}
\newcommand{\ivar}{\mathtt{var}}
\newcommand{\ilam}{\mathtt{lam}}
\newcommand{\iapp}{\mathtt{app}}
\newcommand{\isubst}{\mathsf{subst}}
\newcommand{\iconj}{\mathsf{conj}}
\newcommand{\isplit}{\mathsf{splkit}}

I am indebted to Jon Sterling for pointing out the following
application. In~\cite{fpt:abstract-syntax, ft:name-value-passing}, the
authors work in the internal language of the category of functors
$\mathsf{FinSet} \to \Set$ to give semantics for higher-order abstract
syntax. The inclusion $\Viny : \mathsf{FinSet} \to \Set$ is thought of
as an object of \emph{abstract variables}. It is tiny, because it is
represented by $1^\op \in \mathsf{FinSet}^\op$.




Suppose we have an inductive type $\iTm$ representing the abstract
syntax of the untyped $\lambda$-calculus, and want to implement
single-variable substitution
$\isubst : (\Viny \to \iTm) \to \iTm \to \iTm$ so that
$\isubst(\lambda i. v, u)$ substitutes $u$ for $i$ wherever it occurs
in $v$. We need to induct on $v$ ``under the binder'' for $i$, and the
tininess of $\Viny$ allows us to do this by similar techniques
to~\cref{sec:induction}. Applying adjointness, it will be enough to
define the conjugate operation with type
$\iTm \to \rformu{(\iTm \to \iTm)}$, which can be done by ordinary
induction on $\iTm$.

\paragraph{Implementation.} Finally, we have a prototype
implementation of a type-checker available at
\url{https://github.com/mvr/tiny}, using a novel extension of
normalisation-by-evaluation to handle the new context structure and
type former. The key insight is that the semantic environment
corresponding to a context $\Gamma, \ctxlock{L}$ can be a
\emph{meta-level function} from semantic values of type $\Tiny$ to semantic
environments for the context $\Gamma$. When looking up a variable in
the environment, the values attached to the variable as keys are used
as the arguments to these functions. Our type-checker verifies that
the two round-trips in \cref{prop:dep-adj} are indeed proven by
$\refl{}$ in both directions
(\url{https://github.com/mvr/tiny/blob/master/adjunction.tiny}). The
normalisation algorithm works in practice, but it is left for future
work to verify that it works in theory.

\printbibliography

\appendix

\section{Proofs}\label{sec:proofs}

In this appendix, we provide proofs that the admissible rules described in
\cref{sec:contexts} preserve typing.

Establishing metatheoretic properties of our type theory is somewhat
easier than modal theories like, for example, \cite{gsb:implementing}
and \cite{rfl:spec-note}, because the new rules only ever extend the
context on the right rather than performing some manipulation
potentially at any point in the middle. In particular, it is not
necessary to prove versions of `lock strengthening/weakening/exchange'
as in \cite{gsb:implementing}, or `marking/unmarking' operations as in
\cite{rfl:spec-note}.

Throughout, we continue to let $\subkeyo{\vec{t}}{L}$ stand in for a sequence such
as $\subkeyoe{t_1}{\lockn{L}_1}, \dots, \subkeyoe{t_n}{\lockn{L}_n}$.

Like substitution, our new admissible rules are definable purely on
well-scoped syntax.

\begin{definition}
  A \emph{scope} is an ordered list of variable names and lock names,
  where the variable names have no associated types. Unlike ordinary
  type theory, the relative position of variables and locks is
  important. Every ordinary context and telescope has an underlying
  scope.

  The judgement $G \yields r \rawterm$ denotes a raw term in scope
  $G$. These are defined as usual, but importantly, in a well-scoped
  term, all variable uses have the correct number of associated keys
  for their position in the scope.
\end{definition}

\begin{definition}
  The unit and counit operations
  \begin{mathpar}
    \inferrule*[left=counit,fraction={-{\,-\,}-}]
    {G, G'' \yields r \rawterm \and G, G' \yields t \rawterm
      \text{ for } \lockn{L} \in \locksin{G'} }
    {G, G', G'' \yields r \admbra{\subkeyo{\vec{t}}{L}} \rawterm} \and
    \inferrule*[left=unit,fraction={-{\,-\,}-}]
    {G, i, \ctxlock{L}, G' \yields r \rawterm}
    {G, G' \yields r\sublock{i}{L} \rawterm}
  \end{mathpar}
  are defined on raw syntax as described in \cref{sec:contexts}.
\end{definition}

These operations on raw syntax satisfy a number of associativity
properties whose mind-numbing proofs we delay until later.

\begin{proposition}
  The counit operation is admissible on context telescopes, types and terms.
  \begin{mathpar}
    \inferrule*[left=counit-tele,fraction={-{\,-\,}-}]
    {\Gamma \yields \Gamma'' \tele \and \Gamma, \Gamma' \yields t : \Tiny
      \text{ for } \lockn{L} \in \locksin{\Gamma'} }
    {\Gamma, \Gamma' \yields \Gamma'' \admbra{\subkeyo{\vec{t}}{L}} \tele} \and
    \inferrule*[left=counit,fraction={-{\,-\,}-}]
    {\Gamma, \Gamma'' \yields a : A \and \Gamma, \Gamma' \yields t : \Tiny
      \text{ for } \lockn{L} \in \locksin{\Gamma'} }
    {\Gamma, \Gamma', \Gamma'' \admbra{\subkeyo{\vec{t}}{L}} \yields a \admbra{\subkeyo{\vec{t}}{L}} : A \admbra{\subkeyo{\vec{t}}{L}}}    
  \end{mathpar}
\end{proposition}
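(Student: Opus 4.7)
The plan is to prove the three conclusions simultaneously by mutual structural induction on the derivations of $\Gamma \yields \Gamma'' \tele$, $\Gamma, \Gamma'' \yields A \type$, and $\Gamma, \Gamma'' \yields a : A$, leveraging the fact that $\admbra{\subkeyo{\vec{t}}{L}}$ is defined on raw syntax so that the only thing to check is preservation of the typing and well-formedness judgements. For telescopes, I would induct on the length of $\Gamma''$: the empty case is immediate, and for $\Gamma'', y : B$ or $\Gamma'', \ctxlock{K}$, the induction hypothesis on $\Gamma''$ and on the type $B$ (in the first case) gives the extended context, with the note that extending $\Gamma''$ by a lock $\ctxlock{K}$ does not change $\vec{t}$ since $\vec{t}$ is indexed by $\locksin{\Gamma'}$, not $\locksin{\Gamma'', \ctxlock{K}}$.

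For the type and term cases, all structural rules (the formation, introduction and elimination rules for $\Pi$, $\Sigma$, $\Id$, $\Tiny$, $\rformsym$, universes, etc.) go through uniformly: the counit pushes into each subterm, and when a rule introduces a binder the telescope $\Gamma''$ simply grows by one entry (either a variable or a lock), which is handled by the induction hypothesis. The genuinely interesting case is the \rulen{var} rule, where there are two sub-cases depending on whether the variable $x$ being used lies in $\Gamma$ or in $\Gamma''$. When $x$ lies in $\Gamma''$, the definition leaves the variable usage unchanged (modulo pushing $\admbra{\vec{t}}$ into its attached keys) and the type $A\admbra{\vec{k}}{K}$ of the variable usage becomes $A\admbra{\vec{k}\admbra{\vec{t}}}{K}$, which should equal $(A\admbra{\vec{k}}{K})\admbra{\vec{t}}$ by an associativity lemma for stacked counits. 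When $x$ lies in $\Gamma$, the definition slots the new key $\subkeyo{t_i}{L_i}$ into the existing sequence of stuck keys at the correct position (between the keys for locks in $\Gamma_2$ and the keys for locks in $\Gamma''$), with the additional twist that the new $t_i$ may need to be weakened by the keys for $\Gamma''$ and the existing keys may need to be weakened by $\vec{t}$. To check typing here I would verify that the result is well-scoped and that its type, computed by the admissible rule applied to $A$, matches the stored type on the new variable usage, again by an associativity/commutativity lemma for counit-on-counit.

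The main obstacle is precisely this associativity: I expect to need a lemma stating that for compatible telescopes, $(a\admbra{\vec{s}})\admbra{\vec{t}} \defeq a\admbra{\vec{u}}$ for a suitably interleaved $\vec{u}$ built from $\vec{s}$, $\vec{t}$, and mutual counit applications, and a variant saying that $\admbra{\vec{t}}$ commutes with ordinary single-variable substitution. These are exactly the ``mind-numbing'' associativity properties flagged in the excerpt, so I would defer their formal statement and proof to a subsequent lemma and cite them here. Once those lemmas are in hand, the \rulen{var} case reduces to matching the shape of the new stuck-key sequence against the inductively computed type, and every other case is a routine push-through. A final book-keeping check is that the indexing in the definition of $\admbra{\vec{t}}$, written schematically using $\locksin{\Gamma'}$, is stable under extending $\Gamma''$ with binders, which follows because the lock names are unique and $\locksin{-}$ only sees the locks strictly between $\Gamma$ and the current telescope position.
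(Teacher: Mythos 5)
Your proposal follows the same overall strategy as the paper — mutual induction with the variable rule split into sub-cases by where $x$ lies, deferring the associativity of stacked counits to auxiliary lemmas — and your identification of the two variable sub-cases, together with the corresponding associativity lemmas (the paper's \cref{lem:counit-assoc,lem:counit-assoc-right}), is spot on.

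There is, however, one gap in your treatment of the ``routine'' cases. You say the structural rules for $\rformsym$ push through uniformly because ``the counit pushes into each subterm, and when a rule introduces a binder the telescope simply grows by one entry.'' This is true for \rulen{$\rformsym$-form} and \rulen{$\rformsym$-intro}, but \rulen{$\rformsym$-elim} is \emph{not} routine. Its conclusion type is $A\sublock{i}{K}$, which already has the admissible \emph{unit} applied. Running the induction hypothesis on the premise gives
$\Gamma, \Gamma', \Gamma''\admbra{\subkeyo{\vec{t}}{L}}, i : \Tiny \yields r\admbra{\subkeyo{\vec{t}}{L}} : \rform{K}(A\admbra{\subkeyo{\vec{t}}{L}})$,
and reapplying the elim rule then produces a term of type $A\admbra{\subkeyo{\vec{t}}{L}}\sublock{i}{K}$, whereas you need type $A\sublock{i}{K}\admbra{\subkeyo{\vec{t}}{L}}$. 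Closing this requires a lemma that the \emph{unit} $\sublock{i}{K}$ commutes with the counit $\admbra{\subkeyo{\vec{t}}{L}}$ — the paper's \cref{lem:unit-counit-commute} — which is not the same as the ``commutation with ordinary single-variable substitution'' lemma you mention. The unit operation both substitutes a variable $i$ and removes a lock $\ctxlock{K}$ from the context, so its interaction with the counit (which itself rewrites the stuck keys attached to variables on either side of that lock) is genuinely more delicate than ordinary substitution and needs to be stated and proved separately by a case analysis on whether the variable being eliminated by the unit is the one being examined. Without that lemma, the \rulen{$\rformsym$-elim} case does not close.
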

\begin{proof}
  On telescopes:

  \begin{casesp}
  \item
    \begin{mathpar}
      \inferrule*{\Gamma \yields \Gamma'' \tele \and \Gamma, \Gamma'' \yields A \type}{\Gamma \yields \Gamma'', x : A \tele}
    \end{mathpar}
  By induction $\Gamma, \Gamma' \yields \Gamma''\admbra{\subkeyo{\vec{t}}{L}} \tele$ and $\Gamma, \Gamma', \Gamma''\admbra{\subkeyo{\vec{t}}{L}} \yields A\admbra{\subkeyo{\vec{t}}{L}} \type$, so
  $\Gamma, \Gamma' \yields \Gamma''\admbra{\subkeyo{\vec{t}}{L}}, A\admbra{\subkeyo{\vec{t}}{L}} \tele$

  \item   \begin{mathpar}
    \inferrule*{\Gamma \yields \Gamma'' \tele}{\Gamma \yields \Gamma'', \ctxlock{L} \tele}
  \end{mathpar}
  By induction $\Gamma, \Gamma' \yields \Gamma''\admbra{\subkeyo{\vec{t}}{L}} \tele$ so $\Gamma, \Gamma' \yields \Gamma''\admbra{\subkeyo{\vec{t}}{L}}, \ctxlock{L} \tele$.
  \end{casesp}

  On terms:
  \begin{casesp}
  \item
  \begin{mathpar}
    \inferrule*
    {\Gamma_1, x : A, \Gamma_2, \Gamma'' \yields s : \Tiny \text{ for } \lockn{J} \in \locksin{\Gamma_2} \\\\ 
     \Gamma_1, x : A, \Gamma_2, \Gamma'' \yields s'' : \Tiny \text{ for } \lockn{K} \in \locksin{\Gamma''} }
    {\Gamma_1, x : A, \Gamma_2, \Gamma'' \yields x \stubra{\subkeyo{\vec{s}}{J}, \subkeyo{\vec{s''}}{K}} : A\admbra{\subkeyo{\vec{s}}{J}, \subkeyo{\vec{s''}}{K}}}  
  \end{mathpar}
  Inductively, we have
  \begin{align*}
    \Gamma_1, x : A, \Gamma_2, \Gamma', \Gamma''\admbra{\subkeyo{\vec{t}}{L}} \yields \vec{s}\admbra{\subkeyo{\vec{t}}{L}} : \Tiny \\
    \Gamma_1, x : A, \Gamma_2, \Gamma', \Gamma''\admbra{\subkeyo{\vec{t}}{L}} \yields \vec{s''}\admbra{\subkeyo{\vec{t}}{L}} : \Tiny
  \end{align*}
  Also inductively, we may use the latter to form
  \begin{align*}
    \Gamma_1, x : A, \Gamma_2, \Gamma', \Gamma''\admbra{\subkeyo{\vec{t}}{L}} \yields \vec{t}\admbra{\subkeyo{\vec{s''}\admbra{\subkeyo{\vec{t}}{L}}}{K}} : \Tiny    
  \end{align*}
  These are all the keys required to form
  \begin{align*}
    \Gamma_1, x : A, \Gamma_2, \Gamma', \Gamma''\admbra{\subkeyo{\vec{t}}{L}} &\yields x \stubra{\subkeyo{\vec{s}\admbra{\subkeyo{\vec{t}}{L}}}{J}, \vec{t}\admbra{\subkeyo{\vec{s''}\admbra{\subkeyo{\vec{t}}{L}}}{K}}, \subkeyo{\vec{s''}\admbra{\subkeyo{\vec{t}}{L}}}{K}} \\ &: A\admbra{\subkeyo{\vec{s}\admbra{\subkeyo{\vec{t}}{L}}}{J}, \vec{t}\admbra{\subkeyo{\vec{s''}\admbra{\subkeyo{\vec{t}}{L}}}{K}}, \subkeyo{\vec{s''}\admbra{\subkeyo{\vec{t}}{L}}}{K}}
  \end{align*}
  And finally $A\admbra{\subkeyo{\vec{s}\admbra{\subkeyo{\vec{t}}{L}}}{J}, \subkeyo{\vec{t}\admbra{\subkeyo{\vec{s''}\admbra{\subkeyo{\vec{t}}{L}}}{K}}}{L}, \subkeyo{\vec{s''}\admbra{\subkeyo{\vec{t}}{L}}}{K}} \defeq A\admbra{\subkeyo{\vec{s}}{J}, \subkeyo{\vec{s''}}{K}}\admbra{\subkeyo{\vec{t}}{L}}$ by \cref{lem:counit-assoc}.

  \item
  \begin{mathpar}
    \inferrule*
    {\Gamma, \Gamma''_1, x : A, \Gamma''_2 \yields s'' : \Tiny \text{ for } \lockn{K} \in \locksin{\Gamma''_2}}
    {\Gamma, \Gamma''_1, x : A, \Gamma''_2 \yields x \stubra{\subkeyo{\vec{s''}}{K}} : A\admbra{\subkeyo{\vec{s''}}{K}}}  
  \end{mathpar}
  Inductively $\Gamma, \Gamma', \Gamma''_1\admbra{\subkeyo{\vec{t}}{L}}, x : A\admbra{\subkeyo{\vec{t}}{L}}, \Gamma''_2\admbra{\subkeyo{\vec{t}}{L}} \yields s''\admbra{\subkeyo{\vec{t}}{L}} : \Tiny$, and we may form $\Gamma, \Gamma', \Gamma''_1\admbra{\subkeyo{\vec{t}}{L}}, x : A\admbra{\subkeyo{\vec{t}}{L}}, \Gamma''_2\admbra{\subkeyo{\vec{t}}{L}} \yields x \stubra{\subkeyo{\vec{s''}\admbra{\subkeyo{\vec{t}}{L}}}{K}} : A\admbra{\subkeyo{\vec{t}}{L}}\admbra{\subkeyo{\vec{s''}\admbra{\subkeyo{\vec{t}}{L}}}{K}}$. Finally, $A\admbra{\subkeyo{\vec{t}}{L}}\admbra{\subkeyo{\vec{s''}\admbra{\subkeyo{\vec{t}}{L}}}{K}} \defeq A\admbra{\subkeyo{\vec{s''}}{K}}\admbra{\subkeyo{\vec{t}}{L}}$ by \cref{lem:counit-assoc-right}.

  \item
  \begin{mathpar}
    \inferrule*
    {\Gamma, \Gamma'' \yields A \type \and \Gamma, \Gamma'', x : A \yields B \type}
    {\Gamma, \Gamma'' \yields \prd{x : A} B \type}  
  \end{mathpar}
  
  Inductively $\Gamma, \Gamma', \Gamma'' \admbra{\subkeyo{\vec{t}}{L}} \yields A \admbra{\subkeyo{\vec{t}}{L}} \type$ and also, with the telescope $\Gamma''$ extended, $\Gamma, \Gamma', \Gamma'' \admbra{\subkeyo{\vec{t}}{L}}, x : A \admbra{\subkeyo{\vec{t}}{L}} \yields B\admbra{\subkeyo{\vec{t}}{L}} \type$, and we may form $\Gamma, \Gamma', \Gamma'' \admbra{\subkeyo{\vec{t}}{L}} \yields \prd{x:A \admbra{\subkeyo{\vec{t}}{L}}} B \admbra{\subkeyo{\vec{t}}{L}} \type$.

  \item
  \begin{mathpar}
    \inferrule*
    {\Gamma, \Gamma'', \ctxlock{K} \yields A \type}
    {\Gamma, \Gamma'' \yields \rform{K} A \type}  
  \end{mathpar}  
  Inductively $\Gamma, \Gamma', \Gamma'' \admbra{\subkeyo{\vec{t}}{L}}, \ctxlock{K} \yields A \admbra{\subkeyo{\vec{t}}{L}} \type$ and so we may form $\Gamma, \Gamma', \Gamma'' \admbra{\subkeyo{\vec{t}}{L}} \yields \rform{K} (A \admbra{\subkeyo{\vec{t}}{L}}) \type$.

  \item
  \begin{mathpar}
    \inferrule*
    {\Gamma, \Gamma'', \ctxlock{K} \yields a : A}
    {\Gamma, \Gamma'' \yields \rintro{K}{a} : \rform{K} A}  
  \end{mathpar}  
  Similar: Inductively $\Gamma, \Gamma', \Gamma'' \admbra{\subkeyo{\vec{t}}{L}}, \ctxlock{K} \yields a \admbra{\subkeyo{\vec{t}}{L}} : A \admbra{\subkeyo{\vec{t}}{L}}$ and so we may form $\Gamma, \Gamma', \Gamma'' \admbra{\subkeyo{\vec{t}}{L}} \yields \rintro{K}{a \admbra{\subkeyo{\vec{t}}{L}}} : \rform{K} (A \admbra{\subkeyo{\vec{t}}{L}})$.
  
  \item
  \begin{mathpar}
    \inferrule*
    {\Gamma, \Gamma'', i : \Tiny \yields r : \rform{K} A}
    {\Gamma, \Gamma'' \yields r(\rbind i) : A\sublock{i}{K} \type}  
  \end{mathpar}  
  Inductively $\Gamma, \Gamma', \Gamma'' \admbra{\subkeyo{\vec{t}}{L}}, i : \Tiny \yields r \admbra{\subkeyo{\vec{t}}{L}} : \rform{K}(A)\admbra{\subkeyo{\vec{t}}{L}}$. By definition $(\rform{K}A)\admbra{\subkeyo{\vec{t}}{L}} \defeq \rform{K}(A\admbra{\subkeyo{\vec{t}}{L}})$, and so reapplying the rule gives $\Gamma, \Gamma', \Gamma'' \admbra{\subkeyo{\vec{t}}{L}} \yields (r \admbra{\subkeyo{\vec{t}}{L}})(\rbind i) : A \admbra{\subkeyo{\vec{t}}{L}}\sublock{i}{K}$. Finally, $A \admbra{\subkeyo{\vec{t}}{L}}\sublock{i}{K} \defeq A \sublock{i}{K}\admbra{\subkeyo{\vec{t}}{L}}$ by \cref{lem:unit-counit-commute}.
  \end{casesp}
\end{proof}

\begin{proposition}
  The unit operation is admissible on context telescopes, types and terms.
  \begin{mathpar}
    \inferrule*[left=unit-tele,fraction={-{\,-\,}-}]
    {\Gamma, i : \Tiny, \ctxlock{L} \yields \Gamma' \tele}
    {\Gamma \yields \Gamma'\sublock{i}{L} \tele} \and
    \inferrule*[left=unit,fraction={-{\,-\,}-}]
    {\Gamma, i : \Tiny, \ctxlock{L}, \Gamma' \yields a : A}
    {\Gamma, \Gamma'\sublock{i}{L} \yields a\sublock{i}{L} : A\sublock{i}{L}}
  \end{mathpar}
\end{proposition}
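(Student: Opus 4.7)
The plan is to prove both rules by mutual induction on the derivations of $\Gamma, i : \Tiny, \ctxlock{L} \yields \Gamma' \tele$ and $\Gamma, i : \Tiny, \ctxlock{L}, \Gamma' \yields a : A$, following the same blueprint as the counit proposition just completed. The telescope cases are essentially immediate: for $\Gamma', x : A$ we apply the inductive hypothesis to both the sub-telescope and to $A$, and for $\Gamma', \ctxlock{K}$ we apply it to the sub-telescope and reattach $\ctxlock{K}$.

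For type and non-variable term constructors such as $\Pi$-formation, $\rformsym$-formation, $\rformsym$-introduction and $\rformsym$-elimination, the unit commutes with each constructor by definition, so the structural cases reduce to recursive applications of the inductive hypothesis on the subexpressions. The $\rformsym$-elimination case additionally requires a commutation equation of the shape $A\sublock{j}{K}\sublock{i}{L} \defeq A\sublock{i}{L}\sublock{j}{K}$ for $\lockn{K} \neq \lockn{L}$, allowing the output type to be massaged into the form produced by $\rformsym$-elim after $\sublock{i}{L}$.

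The heart of the argument is the variable case, which splits on the position of $x$. When $x$ lies in $\Gamma'$, the usage carries no key for $\lockn{L}$, and it suffices to recursively apply $\sublock{i}{L}$ to each attached $\vec{s}$. When $x$ lies in $\Gamma$ with $x \ndefeq i$, the usage necessarily contains a key $\subkeyo{t}{L}$ for the vanishing lock; this key is discarded and the remaining ones are processed recursively, yielding a well-formed variable use at the claimed type $A\sublock{i}{L}$. The distinguishing case is $x \defeq i$, where the defining clause $i \stubra{\subkeyo{\vec{j}}{J}, \subkeyo{t}{L}, \subkeyo{\vec{k}}{K}}\sublock{i}{L} \defeq t\sublock{i}{L}$ applies; one must then verify that $t\sublock{i}{L}$ has type $\Tiny\sublock{i}{L} \defeq \Tiny$ by a second appeal to the inductive hypothesis on $t$.

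The principal obstacle, as in the counit proof, is the combinatorial bookkeeping needed to identify the type produced in the variable case with $A\sublock{i}{L}$. This will rely on a suite of syntactic commutation lemmas analogous to \cref{lem:counit-assoc} and \cref{lem:unit-counit-commute}, in particular a lemma describing how $\sublock{i}{L}$ distributes over a stack of stuck counits when $i$ itself occurs among the counit terms. The recursion here is somewhat subtle, because applying the unit to the outermost expression forces further applications deeper inside the key terms $\vec{s}$ and $t$; however, each recursive call strictly decreases either the nesting depth or the underlying raw-term size, so termination is straightforward. I would isolate these lemmas as a preliminary batch and then invoke them inline during each case of the main induction.
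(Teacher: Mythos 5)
Your proposal follows essentially the same approach as the paper's proof: mutual induction on the derivation with telescope and structural cases handled by commuting the unit past the constructor, the variable case split into the three positions of $x$ relative to $i$ exactly as in the paper, and the type agreement delegated to a preliminary batch of syntactic commutation lemmas (the paper's versions of which are \cref{lem:unit-counit-commute}, \cref{lem:unit-noccur}, and \cref{lem:unit-unit-commute}). The only small imprecision is in your description of the helper lemma for the case where $x$ precedes $i$: the operative fact in the paper's \cref{lem:unit-noccur} is that $i$ does \emph{not} occur in the underlying term $\judge$ (though it may occur in the attached key terms), which lets the $\lockn{L}$ key be dropped cleanly.
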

\begin{proof}
  On telescopes:
  \begin{casesp}
  \item
    \begin{mathpar}
      \inferrule*
      {\Gamma, i : \Tiny, \ctxlock{L} \yields \Gamma' \tele \and \Gamma, i : \Tiny, \ctxlock{L}, \Gamma' \yields A \type}
      {\Gamma, i : \Tiny, \ctxlock{L} \yields \Gamma', x : A \tele}
    \end{mathpar}
    Inductively $\Gamma \yields \Gamma'\sublock{i}{L} \tele$ and $\Gamma, \Gamma'\sublock{i}{L} \yields A\sublock{i}{L} \type$, and we may form $\Gamma \yields \Gamma'\sublock{i}{L}, x : A\sublock{i}{L} \tele$.

  \item   \begin{mathpar}
      \inferrule*
      {\Gamma, i : \Tiny, \ctxlock{L} \yields \Gamma' \tele}
      {\Gamma, i : \Tiny, \ctxlock{L} \yields \Gamma', \ctxlock{K} \tele}
    \end{mathpar}
    Inductively $\Gamma \yields \Gamma'\sublock{i}{L} \tele$ so also $\Gamma \yields \Gamma'\sublock{i}{L}, \ctxlock{K} \tele$.
  \end{casesp}

  On terms:
  \begin{casesp}
  \item When $x \ndefeq i$ and $x$ comes before $i$ in the context:
    \begin{mathpar}
    \inferrule*
    {\Gamma_1, x : A, \Gamma_2, i : \Tiny, \ctxlock{L}, \Gamma' \yields s : \Tiny \text{ for } \lockn{J} \in \locksin{\Gamma_2} \\\\
     \Gamma_1, x : A, \Gamma_2, i : \Tiny, \ctxlock{L}, \Gamma' \yields l : \Tiny \\\\      
     \Gamma_1, x : A, \Gamma_2, i : \Tiny, \ctxlock{L}, \Gamma' \yields s' : \Tiny \text{ for } \lockn{K} \in \locksin{\Gamma'}}
    {\Gamma_1, x : A, \Gamma_2, i : \Tiny, \ctxlock{L}, \Gamma' \yields x \stubra{\subkeyo{\vec{s}}{J}, \subkeyo{l}{L}, \subkeyo{\vec{s'}}{K}} : A\admbra{\subkeyo{\vec{s}}{J}, \subkeyo{l}{L}, \subkeyo{\vec{s'}}{K}}}  
  \end{mathpar}

  Inductively, $\Gamma_1, x : A, \Gamma_2, \Gamma'\sublock{i}{L} \yields s\sublock{i}{L} : \Tiny$ and $\Gamma_1, x : A, \Gamma_2, \Gamma'\sublock{i}{L} \yields s'\sublock{i}{L} : \Tiny$, so
  \begin{align*}
    \Gamma_1, x : A, \Gamma_2, \Gamma'\sublock{i}{L} &\yields x \stubra{\subkeyo{\vec{s}\sublock{i}{L}}{J}, \subkeyo{\vec{s'}\sublock{i}{L}}{K}} \\
                                                     &: A\admbra{\subkeyo{\vec{s}\sublock{i}{L}}{J}, \subkeyo{\vec{s'}\sublock{i}{L}}{K}} 
  \end{align*}
  Finally, $A\admbra{\subkeyo{\vec{s}\sublock{i}{L}}{J}, \subkeyo{\vec{s'}\sublock{i}{L}}{K}} \defeq A\admbra{\subkeyo{\vec{s}}{J}, \subkeyo{l}{L}, \subkeyo{\vec{s'}}{K}}\sublock{i}{L}$, by \cref{lem:unit-noccur}.  

    \item When $x \ndefeq i$ and $x$ comes after $i$ in the context:
    \begin{mathpar}
    \inferrule*
    {\Gamma, i : \Tiny, \ctxlock{L}, \Gamma'_1, x : A, \Gamma'_2 \yields s' : \Tiny \text{ for } \lockn{K} \in \locksin{\Gamma'_2}}
    {\Gamma, i : \Tiny, \ctxlock{L}, \Gamma'_1, x : A, \Gamma'_2 \yields x \stubra{\subkeyo{\vec{s'}}{K}} : A\admbra{\subkeyo{\vec{s'}}{K}}}  
  \end{mathpar}
  Inductively, $\Gamma, \Gamma'_1\sublock{i}{L}, x : A\sublock{i}{L}, \Gamma'_2\sublock{i}{L} \yields \vec{s'}\sublock{i}{L} : \Tiny$ and so $\Gamma, \Gamma'_1\sublock{i}{L}, x : A\sublock{i}{L}, \Gamma'_2\sublock{i}{L} \yields x\stubra{\subkeyo{\vec{s'}\sublock{i}{L}}{K}} : A\admbra{\subkeyo{\vec{s'}\sublock{i}{L}}{K}}$. Finally, by Lemma~\ref{lem:unit-counit-commute}, $A\sublock{i}{L}\admbra{\subkeyo{\vec{s'}\sublock{i}{L}}{K}} \defeq A\admbra{\subkeyo{\vec{s'}}{K}}\sublock{i}{L}$.

\item
    \begin{mathpar}
    \inferrule*
    {\Gamma, i : \Tiny, \ctxlock{L}, \Gamma' \yields l : \Tiny \\\\
      \Gamma, i : \Tiny, \ctxlock{L}, \Gamma' \yields s' : \Tiny \text{ for } \lockn{K} \in \locksin{\Gamma'}}
    {\Gamma, i : \Tiny, \ctxlock{L}, \Gamma' \yields i \stubra{\subkeyo{l}{L}, \subkeyo{\vec{s'}}{K}} : \Tiny}
  \end{mathpar}
  Inductively, $\Gamma, \Gamma'\sublock{i}{L} \yields l\sublock{i}{L} : \Tiny$ and we are done.

  \item
  \begin{mathpar}
    \inferrule*
    {\Gamma, i : \Tiny, \ctxlock{L}, \Gamma' \yields A \type \and \Gamma, i : \Tiny, \ctxlock{L}, \Gamma', x : A \yields B \type}
    {\Gamma, i : \Tiny, \ctxlock{L}, \Gamma' \yields \prd{x : A} B \type}  
  \end{mathpar}
  
  Inductively $\Gamma, \Gamma'\sublock{i}{L} \yields A \sublock{i}{L} \type$ and $\Gamma, \Gamma'\sublock{i}{L}, x : A\sublock{i}{L} \yields B\sublock{i}{L} \type$, so we may form $\Gamma, \Gamma'\sublock{i}{L} \yields \prd{x:A \sublock{i}{L}} B\sublock{i}{L} \type$.

  \item
  \begin{mathpar}
    \inferrule*
    {\Gamma, i : \Tiny, \ctxlock{L}, \Gamma', \ctxlock{K} \yields A \type}
    {\Gamma, i : \Tiny, \ctxlock{L}, \Gamma' \yields \rform{K} A \type}  
  \end{mathpar}  
  Inductively $\Gamma, \Gamma'\sublock{i}{L}, \ctxlock{K} \yields A\sublock{i}{L} \type$ and so we may form $\Gamma, \Gamma'\sublock{i}{L} \yields \rform{K} (A \sublock{i}{L}) \type$.

  \item
  \begin{mathpar}
    \inferrule*
    {\Gamma, i : \Tiny, \ctxlock{L}, \Gamma', \ctxlock{K} \yields a : A}
    {\Gamma, i : \Tiny, \ctxlock{L}, \Gamma' \yields \rintro{K}{a} : \rform{K} A}  
  \end{mathpar}  
  Similar: Inductively $\Gamma, \Gamma'\sublock{i}{L}, \ctxlock{K} \yields a\sublock{i}{L} : A\sublock{i}{L}$ and so we may form $\Gamma, \Gamma'\sublock{i}{L} \yields \rintro{K}{a\sublock{i}{L}} : \rform{K} (A \sublock{i}{L}) \type$.
  
  \item
  \begin{mathpar}
    \inferrule*
    {\Gamma, i : \Tiny, \ctxlock{L}, \Gamma', k : \Tiny \yields r : \rform{K} A}
    {\Gamma, i : \Tiny, \ctxlock{L}, \Gamma' \yields r(\rbind k) : A\sublock{k}{K} \type}  
  \end{mathpar}  
  Inductively $\Gamma, \Gamma'\sublock{i}{L}, k : \Tiny \yields r \sublock{i}{L} : \rform{K}(A)\sublock{i}{L}$. By definition $\rform{K}(A)\sublock{i}{L} \defeq \rform{K}(A\sublock{i}{L})$, and so reapplying the rule gives $\Gamma, \Gamma'\sublock{i}{L} \yields (r \sublock{i}{L})(\rbind k) : A \sublock{i}{L}\sublock{k}{K}$. Finally, $A \sublock{i}{L}\sublock{k}{K} \defeq A \sublock{k}{K}\sublock{i}{L}$ by \cref{lem:unit-unit-commute}.  
\end{casesp}
\end{proof}

Now, the proofs of the equations used in the previous two
propositions, all of which hold on the level of well-scoped raw terms.
For each equation, the variable rule is the only case where something
interesting happens.

\begin{lemma}\label{lem:counit-assoc}
  \begin{mathpar} 
    \inferrule*[fraction={-{\,-\,}-}]
    {G_1 \yields \judge \rawterm \\\\
     G_1, G_2, G'' \yields s \rawterm \text{ for } \lockn{J} \in \locksin{G_2} \\\\ 
      G_1, G_2, G' \yields t \rawterm \text{ for } \lockn{L} \in \locksin{G'} \\\\ 
     G_1, G_2, G'' \yields s'' \rawterm \text{ for } \lockn{K} \in \locksin{G''}       
    }
    {G_1, G_2, G', G'' \yields \judge\admbra{\subkeyo{\vec{s}}{J}, \subkeyo{\vec{s''}}{K}}\admbra{\subkeyo{\vec{t}}{L}} \defeq
\judge\admbra{\subkeyo{\vec{s}\admbra{\subkeyo{\vec{t}}{L}}}{J}, \subkeyo{\vec{t}\admbra{\subkeyo{\vec{s''}\admbra{\subkeyo{\vec{t}}{L}}}{K}}}{L}, \subkeyo{\vec{s''}\admbra{\subkeyo{\vec{t}}{L}}}{K}}}
  \end{mathpar}
\end{lemma}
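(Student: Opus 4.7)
The plan is to proceed by induction on the raw-term structure of $\judge$. For each constructor other than the variable rule, the two admissible counit operations are defined to distribute over subterms in exactly the same way regardless of which order we apply them, so the inductive hypothesis on immediate subterms (possibly in an extended scope, when we pass under a binder) gives the equation directly. The interesting work is concentrated in the variable case, where the definitions of the counit operation actually do something non-trivial.

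For the routine cases, I would walk through the term formers of interest: pairs, projections, $\lambda$, application, $\rintro{}{-}$, amazing application $-(\rbind{i})$, and the type formers $\Pi$, $\Sigma$, $\Idsym$, $\rformsym$. In each, both sides of the equation push the two counit operations componentwise into the subterms, so the result follows from IH. When a binder is crossed, the scope $G_1$ grows on the right by a variable or lock, but the IH applies nonetheless because we only ever extend scopes monotonically on the right; crucially, variables bound under the term are added \emph{after} $G_1$ and so are never annotated by either counit operation.

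The key case is the variable rule: a variable $x$ in $G_1$, after the first counit, becomes $x\stubra{\subkeyo{\vec{s}}{J}, \subkeyo{\vec{s''}}{K}}$. Now applying $\admbra{\subkeyo{\vec{t}}{L}}$ uses the general three-part definition from \cref{sec:contexts}:
\begin{align*}
x \stubra{\subkeyo{\vec{s}}{J}, \subkeyo{\vec{s''}}{K}}\admbra{\subkeyo{\vec{t}}{L}}
&\defeq
x \stubra{\subkeyo{\vec{s}\admbra{\subkeyo{\vec{t}}{L}}}{J}, \subkeyo{\vec{t}\admbra{\subkeyo{\vec{s''}\admbra{\subkeyo{\vec{t}}{L}}}{K}}}{L}, \subkeyo{\vec{s''}\admbra{\subkeyo{\vec{t}}{L}}}{K}}
\end{align*}
which is literally the result of applying the combined counit on the right of the equation directly to $x$. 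So in this case the equation holds definitionally after unfolding the definition of the variable rule on both sides; no further induction is needed for the variable itself, although the counit operations applied to the annotations $\vec{s}, \vec{s''}, \vec{t}$ are subterms whose equality is part of the usual IH bookkeeping (indeed, all the subterms being compared in the two expressions coincide on the nose).

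The main obstacle is not mathematical depth but \emph{notational hygiene}: keeping track of which lock names, telescopes, and sequences are in play, and verifying that the tuples $\vec{s}$, $\vec{s''}$, $\vec{t}$ (each of which is really a list of terms indexed by the locks in $G_2$, $G''$, $G'$ respectively) are handled compatibly when the definition collapses a single $\subkeyo{t}{L}$ into a list $\subkeyo{\vec{t}}{L}$. I would therefore first spell out the general counit rule for an arbitrary list $\subkeyo{\vec{t}}{L}$ slotted between two existing key-blocks (a mild generalisation of the single-$t$ version stated in the body), and then the variable case drops out as a single line of unfolding; the other cases are then truly mechanical.
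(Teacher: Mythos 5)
Your overall strategy---induction on raw-term structure, with the non-variable cases pushing both counits through componentwise and the real content concentrated in the variable rule---is the same as the paper's. But your treatment of the variable case has a gap.

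You write that a variable $x$ in $G_1$, ``after the first counit, becomes $x\stubra{\subkeyo{\vec{s}}{J}, \subkeyo{\vec{s''}}{K}}$,'' and then claim the equation ``holds definitionally after unfolding the definition of the variable rule on both sides; no further induction is needed for the variable itself.'' This is only correct in the special case where $x$ sits at the very end of $G_1$. In general, write $G_1 \defeq G_{11}, x, G_{12}$: if $G_{12}$ contains locks $\lockn{M}$, then the variable usage in $G_1$ is already $x\stubra{\subkeyo{\vec{m}}{M}}$ for some keys $\vec{m}$, and both counit operations \emph{also} recurse into those $\vec{m}$. After the first counit you get $x\stubra{\subkeyo{\vec{m}\admbra{\subkeyo{\vec{s}}{J}, \subkeyo{\vec{s''}}{K}}}{M}, \subkeyo{\vec{s}}{J}, \subkeyo{\vec{s''}}{K}}$, and after the second $x\stubra{\subkeyo{\vec{m}\admbra{\subkeyo{\vec{s}}{J}, \subkeyo{\vec{s''}}{K}}\admbra{\subkeyo{\vec{t}}{L}}}{M}, \dots}$. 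The right-hand side of the lemma instead has $\vec{m}\admbra{\subkeyo{\vec{s}\admbra{\subkeyo{\vec{t}}{L}}}{J}, \subkeyo{\vec{t}\admbra{\dots}}{L}, \subkeyo{\vec{s''}\admbra{\subkeyo{\vec{t}}{L}}}{K}}$ in that slot. Getting these to agree is exactly a use of the inductive hypothesis applied to the (structurally smaller) key terms $\vec{m}$. So the variable case is \emph{not} purely definitional; it is precisely where the induction is invoked, and without acknowledging the $\vec{m}$ annotations the argument does not close. This is how the paper's proof handles the case.

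A secondary but related remark: your claim that ``all the subterms being compared in the two expressions coincide on the nose'' is true for $\vec{s}$, $\vec{s''}$, $\vec{t}$, but not for the hidden $\vec{m}$, which is where the content lies.
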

\begin{proof}
  Suppose $G_1 \defeq G_{11}, x, G_{12}$ and the variable usage is
  \begin{mathpar}
    \inferrule*
    {G_{11}, x, G_{12} \yields m \rawterm \text{ for } \lockn{M} \in \locksin{G_{12}}}
    {G_{11}, x, G_{12} \yields x\substuck{\vec{m}}{M} \rawterm}
  \end{mathpar}
  Then
  \[
  \begin{aligned}
    &x\stubra{\subkeyo{\vec{m}}{M}}\admbra{\subkeyo{\vec{s}}{J}, \subkeyo{\vec{s''}}{K}}\admbra{\subkeyo{\vec{t}}{L}} \\
    &\defeq x\stubra{\subkeyo{\vec{m}\admbra{\subkeyo{\vec{s}}{J}, \subkeyo{\vec{s''}}{K}}}{M}, \subkeyo{\vec{s}}{J}, \subkeyo{\vec{s''}}{K}}\admbra{\subkeyo{\vec{t}}{L}} \\
    &\defeq x\stubra{\subkeyo{\vec{m}\admbra{\subkeyo{\vec{s}}{J}, \subkeyo{\vec{s''}}{K}}\admbra{\subkeyo{\vec{t}}{L}}}{M}, \subkeyo{\vec{s}\admbra{\subkeyo{\vec{t}}{L}}}{J}, \subkeyo{\vec{t}\admbra{\subkeyo{\vec{s''}\admbra{\subkeyo{\vec{t}}{L}}}{K}}}{L}, \subkeyo{\vec{s''}\admbra{\subkeyo{\vec{t}}{L}}}{K}} \\
    &\defeq x\stubra{
      \begin{aligned}[t]
        &\subkeyo{\vec{m}\admbra{\subkeyo{\vec{s}\admbra{\subkeyo{\vec{t}}{L}}}{J}, \subkeyo{\vec{t}\admbra{\subkeyo{\vec{s''}\admbra{\subkeyo{\vec{t}}{L}}}{K}}}{L}, \subkeyo{\vec{s''}\admbra{\subkeyo{\vec{t}}{L}}}{K}}}{M}, \\
        &\subkeyo{\vec{s}\admbra{\subkeyo{\vec{t}}{L}}}{J}, \\
        &\subkeyo{\vec{t}\admbra{\subkeyo{\vec{s''}\admbra{\subkeyo{\vec{t}}{L}}}{K}}}{L}, \\
        &\subkeyo{\vec{s''}\admbra{\subkeyo{\vec{t}}{L}}}{K}}
      \end{aligned} \\
    &\defeq x\stubra{\subkeyo{\vec{m}}{M}}\admbra{\subkeyo{\vec{s}\admbra{\subkeyo{\vec{t}}{L}}}{J}, \subkeyo{\vec{t}\admbra{\subkeyo{\vec{s''}\admbra{\subkeyo{\vec{t}}{L}}}{K}}}{L}, \subkeyo{\vec{s''}\admbra{\subkeyo{\vec{t}}{L}}}{K}}
  \end{aligned}
  \]
  by inductive hypothesis on $m$.
\end{proof}

\begin{lemma}\label{lem:counit-assoc-right}
  \begin{mathpar}
    \inferrule*[fraction={-{\,-\,}-}]{
      G, G''_1 \yields \judge \rawterm \\\\
      G, G' \yields t \rawterm \text{ for } \lockn{L} \in \locksin{G'} \\\\
      G, G''_1, G''_2 \yields s'' \rawterm \text{ for } \lockn{K} \in \locksin{G''_2}
    }{G, G', G''_1, G''_2 \yields \judge\admbra{\subkeyo{\vec{s''}}{K}}\admbra{\subkeyo{\vec{t}}{L}} \defeq \judge \admbra{\subkeyo{\vec{t}}{L}}\admbra{\subkeyo{\vec{s''}\admbra{\subkeyo{\vec{t}}{L}}}{K}}}
  \end{mathpar}
\end{lemma}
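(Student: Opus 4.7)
The plan is to prove the equation by structural induction on the raw term $\judge$, exactly mirroring the proof strategy of \cref{lem:counit-assoc}. All of the cases for type and term constructors (pairing, $\lambda$, $\Pi$, $\rformsym$-intro, $\rformsym$-elim, etc.) are routine: both sides push past the constructor on its immediate subterms, and we conclude by the inductive hypothesis applied to each subterm. The interesting case, as always, is the variable rule.

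For the variable case there are two subcases according to whether the variable $x$ lies in $G$ or in $G''_1$. If $x$ occurs in $G''_1$, say $G''_1 \defeq G''_{11}, x, G''_{12}$, then the variable usage $x\stubra{\subkeyo{\vec{m}}{M}}$ only carries keys for locks $\vec{M} \in \locksin{G''_{12}}$, and these all lie to the right of all the freshly inserted locks of $G'$. In this subcase, applying $\subkey{\vec{s''}}{K}$ followed by $\subkey{\vec{t}}{L}$ and applying $\subkey{\vec{t}}{L}$ followed by the appropriately adjusted $\subkey{\vec{s''}\admbra{\subkey{\vec{t}}{L}}}{K}$ both land in the same normal form, after invoking the inductive hypothesis on the terms $\vec{m}$.

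The heavier subcase is when $x$ lies in $G$, say $G \defeq G_1, x, G_2$. Here the existing keys $\vec{m}$ split into keys for $\lockn{N} \in \locksin{G_2}$ and for $\lockn{M} \in \locksin{G''_1}$. Applying $\subkey{\vec{s''}}{K}$ simply appends $\subkey{\vec{s''}}{K}$ after the keys of $G''_1$ (and propagates into the existing $\vec{m}$, by \cref{lem:counit-assoc}), yielding
\[
  x\stubra{\subkeyo{\vec{n}\admbra{\subkey{\vec{s''}}{K}}}{N},\subkeyo{\vec{m}\admbra{\subkey{\vec{s''}}{K}}}{M}, \subkeyo{\vec{s''}}{K}}.
\]
A second application of $\subkey{\vec{t}}{L}$ then interleaves $\subkey{\vec{t}}{L}$ between the $\lockn{N}$ and $\lockn{M}$ groups, and propagates into each attached term. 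The other order, applying $\subkey{\vec{t}}{L}$ first and then $\subkey{\vec{s''}\admbra{\subkey{\vec{t}}{L}}}{K}$, produces the same final sequence of stuck keys on $x$, again using \cref{lem:counit-assoc} to match the modifications of the attached terms. The equation then follows by the inductive hypothesis applied to each of $\vec{n}$ and $\vec{m}$.

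The main obstacle is purely bookkeeping: tracking how $\vec{t}$ slots into the right position relative to the keys for $G_2$ and $G''_1$, and checking that the two nested applications of \cref{lem:counit-assoc} produce identical modifications of the attached terms. No new conceptual ingredient is needed — the lemma is really the special case of \cref{lem:counit-assoc} in which the middle context $G_2$ is empty on one side, so the proof is a straightforward (if tedious) unfolding of definitions.
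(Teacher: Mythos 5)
Your high-level strategy is the right one and matches the paper: induct on the raw term, dispatch all term- and type-formers by pushing past the constructor, and split the variable case into the subcase where $x$ lies in $G$ and where $x$ lies in $G''_1$. The normal forms you write out for the case $x \in G$ are also essentially correct.

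However, the justifications you offer are off in two places, and the second is a genuine conceptual error. First, where you say ``and propagates into the existing $\vec{m}$, by \cref{lem:counit-assoc}'', no appeal to that lemma is needed or appropriate: the propagation of $\admbra{\subkeyo{\vec{s''}}{K}}$ into the attached terms is just the definition of the counit on a variable occurrence. Likewise, the step where you compare the two sides' modifications of the attached keys is not an application of \cref{lem:counit-assoc}; it is an application of the \emph{inductive hypothesis of the lemma currently being proved} (applied to the keys $\vec{m}$, $\vec{n}$ and, crucially, also to the new keys $\vec{t}$ that appear inside the resulting normal form). You do mention the inductive hypothesis in your final sentence, but it is entangled with the erroneous \cref{lem:counit-assoc} citation in a way that suggests confusion about which is doing the work.

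Second, the closing remark that this lemma ``is really the special case of \cref{lem:counit-assoc} in which the middle context $G_2$ is empty'' is false. In \cref{lem:counit-assoc} the term $\judge$ is scoped over $G_1$ alone, whereas here $\judge$ is scoped over $G, G''_1$ — a strictly larger context carrying a telescope through which the new locks of $G'$ are inserted. Setting $G_2 = \cdot$ in \cref{lem:counit-assoc} does not recover the present statement; the two lemmas have different shapes and both must be proved directly by induction. If you attempted to deduce this lemma from \cref{lem:counit-assoc} rather than giving the independent inductive argument, the proof would not go through.
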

\begin{proof}
  There are two places where the variable may lie.
  \begin{casesp}
  \item If $x$ lies in $G$, so $G \defeq G_1, x, G_2$:
    \begin{mathpar}
      \inferrule*
      {G_1, x, G_2, G''_1 \yields m \rawterm \text{ for } \lockn{M} \in \locksin{G_2} \\\\
       G_1, x, G_2, G''_1 \yields n \rawterm \text{ for } \lockn{N} \in \locksin{G''_1}}
      {G_1, x, G_2, G''_1 \yields x\stubra{\subkeyo{\vec{m}}{M}, \subkeyo{\vec{n}}{N}} \rawterm}
    \end{mathpar}
    And then
    \[
      \begin{aligned}
      &x\stubra{\subkeyo{\vec{m}}{M}, \subkeyo{\vec{n}}{N}}\admbra{\subkeyo{\vec{s''}}{K}}\admbra{\subkeyo{\vec{t}}{L}} \\
      &\defeq x\stubra{\subkeyo{\vec{m}\admbra{\subkeyo{\vec{s''}}{K}}}{M}, \subkeyo{\vec{n}\admbra{\subkeyo{\vec{s''}}{K}}}{N}, \subkeyo{\vec{s''}}{K}}\admbra{\subkeyo{\vec{t}}{L}} \\
      &\defeq x\stubra{
        \begin{aligned}[t]
          &\subkeyo{\vec{m}\admbra{\subkeyo{\vec{s''}}{K}}\admbra{\subkeyo{\vec{t}}{L}}}{M}, \\
          &\subkeyo{\vec{t}\admbra{\subkeyo{\vec{n}\admbra{\subkeyo{\vec{s''}}{K}}\admbra{\subkeyo{\vec{t}}{L}}}{N}, \subkeyo{\vec{s''}\admbra{\subkeyo{\vec{t}}{L}}}{K}}}{L}, \\
          & \subkeyo{\vec{n}\admbra{\subkeyo{\vec{s''}}{K}}\admbra{\subkeyo{\vec{t}}{L}}}{N}, \\
          & \subkeyo{\vec{s''}\admbra{\subkeyo{\vec{t}}{L}}}{K}} 
        \end{aligned}
      \end{aligned}
    \]
    Each of these keys attached to $x$ can be rewritten by induction:
    \begin{align*}
      \vec{m}\admbra{\subkeyo{\vec{s''}}{K}}\admbra{\subkeyo{\vec{t}}{L}} &\defeq \vec{m}\admbra{\subkeyo{\vec{t}}{L}}\admbra{\subkeyo{\vec{s''}\admbra{\subkeyo{\vec{t}}{L}}}{K}} \\
      \vec{t}\admbra{\subkeyo{\vec{n}\admbra{\subkeyo{\vec{s''}}{K}}\admbra{\subkeyo{\vec{t}}{L}}}{N}, \subkeyo{\vec{s''}\admbra{\subkeyo{\vec{t}}{L}}}{K}}
&\defeq\vec{t}\admbra{\subkeyo{\vec{n}\admbra{\subkeyo{\vec{t}}{L}}\admbra{\subkeyo{\vec{s''}\admbra{\subkeyo{\vec{t}}{L}}}{K}}}{N}, \subkeyo{\vec{s''}\admbra{\subkeyo{\vec{t}}{L}}}{K}} \\
      \vec{n}\admbra{\subkeyo{\vec{s''}}{K}}\admbra{\subkeyo{\vec{t}}{L}} &\defeq \vec{n}\admbra{\subkeyo{\vec{t}}{L}}\admbra{\subkeyo{\vec{s''}\admbra{\subkeyo{\vec{t}}{L}}}{K}}
    \end{align*}
    And so, continuing the chain of equations:
    \[
      \begin{aligned}
      &x\stubra{\subkeyo{\vec{m}}{M}, \subkeyo{\vec{n}}{N}}\admbra{\subkeyo{\vec{s''}}{K}}\admbra{\subkeyo{\vec{t}}{L}} \\
      &\defeq \dots \\
      &\defeq x\stubra{
        \begin{aligned}[t]
          &\subkeyo{\vec{m}\admbra{\subkeyo{\vec{t}}{L}}\admbra{\subkeyo{\vec{s''}\admbra{\subkeyo{\vec{t}}{L}}}{K}}}{M}, \\
          &\subkeyo{\vec{t}\admbra{\subkeyo{\vec{n}\admbra{\subkeyo{\vec{t}}{L}}\admbra{\subkeyo{\vec{s''}\admbra{\subkeyo{\vec{t}}{L}}}{K}}}{N}, \subkeyo{\vec{s''}\admbra{\subkeyo{\vec{t}}{L}}}{K}}}{L}, \\
          &\subkeyo{\vec{n}\admbra{\subkeyo{\vec{t}}{L}}\admbra{\subkeyo{\vec{s''}\admbra{\subkeyo{\vec{t}}{L}}}{K}}}{N}, \\
          &\subkeyo{\vec{s''}\admbra{\subkeyo{\vec{t}}{L}}}{K}} 
        \end{aligned} \\
      &\defeq x\stubra{\subkeyo{\vec{m}\admbra{\subkeyo{\vec{t}}{L}}}{M}, \subkeyo{\vec{t}\admbra{\subkeyo{\vec{n}\admbra{\subkeyo{\vec{t}}{L}}}{N}}}{L}, \subkeyo{\vec{n}\admbra{\subkeyo{\vec{t}}{L}}}{N}}\admbra{\subkeyo{\vec{s''}\admbra{\subkeyo{\vec{t}}{L}}}{K}} \\
      &\defeq x\stubra{\subkeyo{\vec{m}}{M}, \subkeyo{\vec{n}}{N}}\admbra{\subkeyo{\vec{t}}{L}}\admbra{\subkeyo{\vec{s''}\admbra{\subkeyo{\vec{t}}{L}}}{K}}
      \end{aligned}
    \]
  \item If $x$ lies in $G_1''$, so $G_1'' \defeq G_{11}'', x, G_{12}''$:
    \begin{mathpar}
      \inferrule*
      {G, G''_{11}, x, G''_{12} \yields n \rawterm \text{ for } \lockn{N} \in \locksin{G''_{12}}}
      {G, G''_{11}, x, G''_{12} \yields x\stubra{\subkeyo{\vec{n}}{N}} \rawterm}
    \end{mathpar}
    The reasoning is essentially the same:
    \[
      \begin{aligned}
      &x\stubra{\subkeyo{\vec{n}}{N}}\admbra{\subkeyo{\vec{s''}}{K}}\admbra{\subkeyo{\vec{t}}{L}} \\
      &\defeq x\stubra{\subkeyo{\vec{n}\admbra{\subkeyo{\vec{s''}}{K}}}{N}, \subkeyo{\vec{s''}}{K}} \admbra{\subkeyo{\vec{t}}{L}} \\
      &\defeq x\stubra{\subkeyo{\vec{t}\admbra{\subkeyo{\vec{n}\admbra{\subkeyo{\vec{s''}}{K}}\admbra{\subkeyo{\vec{t}}{L}}}{N}, \subkeyo{\vec{s''}\admbra{\subkeyo{\vec{t}}{L}}}{K}}}{L}, \subkeyo{\vec{n}\admbra{\subkeyo{\vec{s''}}{K}}\admbra{\subkeyo{\vec{t}}{L}}}{N}, \subkeyo{\vec{s''}\admbra{\subkeyo{\vec{t}}{L}}}{K}} \\
      &\defeq x\stubra{
        \begin{aligned}[t]
          &\subkeyo{\vec{t}\admbra{\subkeyo{\vec{n}\admbra{\subkeyo{\vec{s''}\admbra{\subkeyo{\vec{t}}{L}}}{K}}}{N}, \subkeyo{\vec{s''}\admbra{\subkeyo{\vec{t}}{L}}}{K}}}{L}, \\
          &\subkeyo{\vec{n}\admbra{\subkeyo{\vec{s''}\admbra{\subkeyo{\vec{t}}{L}}}{K}}}{N}, \\
          &\subkeyo{\vec{s''}\admbra{\subkeyo{\vec{t}}{L}}}{K}}          
        \end{aligned} \\
      &\defeq x\stubra{\subkeyo{\vec{t}\admbra{\subkeyo{\vec{n}\admbra{\subkeyo{\vec{t}}{L}}}{N}}}{L}, \subkeyo{\vec{n}\admbra{\subkeyo{\vec{t}}{L}}}{N}}\admbra{\subkeyo{\vec{s''}\admbra{\subkeyo{\vec{t}}{L}}}{K}} \\
      &\defeq x\stubra{\subkeyo{\vec{n}}{N}}\admbra{\subkeyo{\vec{t}}{L}}\admbra{\subkeyo{\vec{s''}\admbra{\subkeyo{\vec{t}}{L}}}{K}}
      \end{aligned}
    \]
    again by induction in the middle step.
  \end{casesp}
\end{proof}

\begin{lemma}\label{lem:unit-counit-commute}
  The unit and counit commute, depending on the relative position in
  the context the two operations are being applied to.
  \begin{mathpar}
    \inferrule*[fraction={-{\,-\,}-}]
    {G, G''_1, i, \ctxlock{K}, G''_2 \yields \judge \rawterm \\\\
     G, G' \yields t \rawterm \text{ for } \lockn{L} \in \locksin{G'}}
    {G, G', G''_1, G''_2 \yields \judge \admbra{\subkeyo{\vec{t}}{L}}\sublock{i}{K} \defeq \judge\sublock{i}{K}\admbra{\subkeyo{\vec{t}}{L}}}  
  \end{mathpar}

  \begin{mathpar}
    \inferrule*[fraction={-{\,-\,}-}]
    {G_1, i, \ctxlock{L}, G_2, G'' \yields \judge \rawterm \\\\
     G_1, i, \ctxlock{L}, G_2, G' \yields s' \rawterm \text{ for } \lockn{K} \in \locksin{G'}}
   {G_1, i, \ctxlock{L}, G_2, G', G'' \yields \judge\admbra{\subkeyo{\vec{s'}}{K}}\sublock{i}{L} \defeq \judge\sublock{i}{L}\admbra{\subkeyo{\vec{s'}\sublock{i}{L}}{K}} }
 \end{mathpar}  

\end{lemma}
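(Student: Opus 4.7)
The plan is to prove both equations by induction on the structure of the raw term $\judge$, following the same pattern as Lemmas~\ref{lem:counit-assoc} and~\ref{lem:counit-assoc-right}. As those proofs illustrate, once we verify the variable case, all other cases (ordinary type and term formers, $\lambda$-abstractions, the modal formers $\rformu$-form and $\rintro{L}{-}$, and amazing application) commute with the admissible operations essentially by definition: both $\admbra{-}$ and $\sublock{-}{-}$ are defined to push past every term constructor until they meet a variable usage, so these structural cases reduce immediately to the inductive hypothesis on the subterms. The only subtlety for modal formers is that $\rformsym$-elim introduces a unit on the type, and we use Lemma~\ref{lem:unit-unit-commute} (which would be proven in parallel fashion) for the type-level side, whereas the term-level side is governed by the current lemma applied inductively.

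For the first equation, the interesting case is a variable usage $x\stubra{\subkeyo{\vec{m}}{M}}$, and I would case-split on the location of $x$ in the segmented scope $G, G''_1, i, \ctxlock{K}, G''_2$. The representative cases are: when $x$ lies in $G$, so $x$ carries keys for the locks of $G$-tail together with $\locksin{G''_1, \ctxlock{K}, G''_2}$, and both orderings must reshuffle this key list consistently; when $x$ lies in $G''_1$, in which case $\sublock{i}{K}$ still eliminates the $\ctxlock{K}$ slot but the new $\ctxlock{L}$'s introduced by the counit lie strictly to the left of $x$ and so the variable usage itself is untouched by $\admbra{\subkeyo{\vec{t}}{L}}$; when $x \defeq i$, the distinguished case, where the key slot attached to $\ctxlock{K}$ is precisely the one consumed by $\sublock{i}{K}$, returning the stored term with $\admbra{\subkeyo{\vec{t}}{L}}$ applied in both orders; and when $x$ lies in $G''_2$, where neither operation touches the variable (the $\ctxlock{K}$ for $\sublock{i}{K}$ is too far to the left). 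In each case the key terms stored alongside $x$ must themselves be rewritten via the inductive hypothesis, and the routine part is verifying that the final reshuffled key lists agree on both sides.

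The second equation is the symmetric statement with the unit pushed in from the left, and the case analysis on the position of $x$ mirrors the first: either $x \defeq i$ (the main case, where the unit consumes the stored key indexed by $\ctxlock{L}$), or $x$ lies in one of $G_1, G_2$, or $G''$. The distinctive feature is the reassociation $\subkeyo{\vec{s'}\sublock{i}{L}}{K}$ on the right-hand side, which arises precisely because the stored terms $\vec{s'}$ may themselves mention $i$; applying $\sublock{i}{L}$ first propagates the unit into each stored term before the outer counit is applied, whereas in the left-hand composite the outer counit interposes stored keys whose terms still mention $i$ and the later unit descends into them. Checking that these two propagation patterns agree is exactly what the inductive hypothesis gives us when applied to each $\vec{s'}_k$.

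The main obstacle is bookkeeping rather than insight: writing out the chains of equalities for the variable cases is tedious because each step unfolds one definitional equation and the stored key terms carry elaborate sub-lists that must be rewritten simultaneously by the inductive hypothesis, much as in the proof of Lemma~\ref{lem:counit-assoc-right}. I do not expect to need a strengthened induction hypothesis, nor any auxiliary lemma beyond those already listed; the delicate point is simply ensuring that every occurrence of a stored key is transformed by the correct instance of the inductive hypothesis, which is determined by the position of the corresponding lock relative to the two operations being commuted.
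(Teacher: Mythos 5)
Your proposal matches the paper's proof: both proceed by induction on the raw term, observe that all non-variable cases push the operations past the constructor and reduce to the inductive hypothesis, and identify the variable usage (especially for $x \defeq i$, where the unit consumes the stored key term) as the only case requiring a calculation, which you correctly outline by tracking how the stored key terms are rewritten in each ordering. One small correction: since these lemmas concern raw (untyped) terms, the $\rformsym$-elim form carries no type annotation, so there is no ``type-level side'' requiring \cref{lem:unit-unit-commute} within this induction — that lemma is invoked only later, in the proof that the unit operation is admissible on well-typed terms.
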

\begin{proof}
  For the first, there are several places a variable could be, but the case of
  interest is the variable rule for $i$:
  \begin{mathpar}
    \inferrule*
    {G, G''_1, i, \ctxlock{K}, G''_2 \yields k \rawterm \text{ for } \lockn{K} \\\\
      G, G''_1, i, \ctxlock{K}, G''_2 \yields o \rawterm \text{ for } \lockn{O} \in \locksin{G''_2} 
    }
    {G, G''_1, i, \ctxlock{K}, G''_2 \yields i\stubra{\subkeyo{k}{K}, \subkeyo{\vec{o}}{O}} \rawterm}
  \end{mathpar}
  in which case
  \begin{align*}
    &i\stubra{\subkeyo{k}{K}, \subkeyo{\vec{o}}{O}}\admbra{\subkeyo{\vec{t}}{L}}\sublock{i}{K} \\
    &\defeq i\stubra{\subkeyo{\vec{t}\admbra{\subkeyo{k\admbra{\subkeyo{\vec{t}}{L}}}{K}, \subkeyo{\vec{o}\admbra{\subkeyo{\vec{t}}{L}}}{O}}}{L}, \subkeyo{k\admbra{\subkeyo{\vec{t}}{L}}}{K}, \subkeyo{\vec{o}\admbra{\subkeyo{\vec{t}}{L}}}{O}}\sublock{i}{K} \\
    &\defeq k\admbra{\subkeyo{\vec{t}}{L}}\sublock{i}{K} \\
    &\defeq k\sublock{i}{K}\admbra{\subkeyo{\vec{t}}{L}} \\
    &\defeq i\stubra{\subkeyo{k}{K}, \subkeyo{\vec{o}}{O}}\sublock{i}{K}\admbra{\subkeyo{\vec{t}}{L}} 
  \end{align*}

  For the second, again the variable rule for $i$ is the interesting case:
  \begin{mathpar}
    \inferrule*[fraction={-{\,-\,}-}]
    {G_1, i, \ctxlock{L}, G_2, G'' \yields l \rawterm \text{ for } \lockn{L} \\\\
      G_1, i, \ctxlock{L}, G_2, G'' \yields m \rawterm \text{ for } \lockn{M} \in \locksin{G_2} \\\\
      G_1, i, \ctxlock{L}, G_2, G'' \yields n \rawterm \text{ for } \lockn{N} \in \locksin{G''}       
    }
    {G_1, i, \ctxlock{L}, G_2, G'' \yields i\stubra{\subkeyo{l}{L}, \subkeyo{\vec{m}}{M}, \subkeyo{\vec{n}}{N}} \rawterm}
  \end{mathpar}
  in which case
  \begin{align*}
    &i\stubra{\subkeyo{l}{L}, \subkeyo{\vec{m}}{M}, \subkeyo{\vec{n}}{N}}\admbra{\subkeyo{\vec{s'}}{K}}\sublock{i}{L} \\
    &\defeq i\stubra{\subkeyo{l\admbra{\subkeyo{\vec{s'}}{K}}}{L}, \subkeyo{\vec{m}\admbra{\subkeyo{\vec{s'}}{K}}}{M}, \subkeyo{\vec{s'}\admbra{\subkeyo{\vec{n}\admbra{\subkeyo{\vec{s'}}{K}}}{N}}}{K}, \subkeyo{\vec{n}\admbra{\subkeyo{\vec{s'}}{K}}}{N}}\sublock{i}{L} \\
    &\defeq l\admbra{\subkeyo{\vec{s'}}{K}}\sublock{i}{L} \\
    &\defeq l\sublock{i}{L}\admbra{\subkeyo{\vec{s'}\sublock{i}{L}}{K}} \\
    &\defeq i\stubra{\subkeyo{l}{L}, \subkeyo{\vec{m}}{M}, \subkeyo{\vec{n}}{N}}\sublock{i}{L}\admbra{\subkeyo{\vec{s'}\sublock{i}{L}}{K}}
  \end{align*}
  
\end{proof}

\begin{lemma}\label{lem:unit-noccur}
  \begin{mathpar}
    \inferrule*[fraction={-{\,-\,}-}]
    {G_1 \yields \judge \rawterm \\\\
      G_1, G_2, i, \ctxlock{L}, G' \yields s \rawterm \text{ for } \lockn{M} \in \locksin{G_2} \\\\
      G_1, G_2, i, \ctxlock{L}, G' \yields l \rawterm \text{ for } \lockn{L} \\\\
      G_1, G_2, i, \ctxlock{L}, G' \yields s' \rawterm \text{ for } \lockn{N} \in \locksin{G'}
 }{G_1, G_2, G' \yields \judge\admbra{\subkeyo{\vec{s}}{J}, \subkeyo{l}{L}, \subkeyo{\vec{s'}}{K}}\sublock{i}{L} \defeq \judge\admbra{\subkeyo{\vec{s}\sublock{i}{L}}{J}, \subkeyo{\vec{s'}\sublock{i}{L}}{K}}}
  \end{mathpar}  
\end{lemma}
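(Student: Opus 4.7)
The plan is to proceed by structural induction on the raw term $\judge$, mirroring the proofs of \cref{lem:counit-assoc,lem:counit-assoc-right,lem:unit-counit-commute}. For every raw term constructor other than the variable rule, the admissible counit $\admbra{\cdot}$ and unit $\sublock{i}{L}$ operations both commute past the constructor in the standard way (going under binders such as $\lambda x$, descending into $\Sigma$ and $\Pi$ components, and traversing $\rformsym$-introduction, with $\rformsym$-elimination handled via \cref{lem:unit-counit-commute}). Each such case reduces immediately to applying the induction hypothesis to the subterms, possibly with $G_1$ extended by any binder encountered along the way.

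The only case requiring actual computation is the variable rule. Write $G_1 \defeq G_{11}, x, G_{12}$ and $\vec{M} \defeq \locksin{G_{12}}$, so that the variable usage takes the form $x\stubra{\subkeyo{\vec{m}}{M}}$ with each $m_j$ in scope $G_1$. The key observation is that $\judge$ cannot mention $i$ or any variable locked behind $\ctxlock{L}$, since its scope is $G_1$ alone; so the $\subkeyo{l}{L}$ inserted by the outer counit is purely for typing and must be annihilated by $\sublock{i}{L}$. Concretely, I would first unfold the counit on a variable to obtain
\begin{align*}
  x\stubra{\subkeyo{\vec{m}}{M}} \admbra{\subkeyo{\vec{s}}{J}, \subkeyo{l}{L}, \subkeyo{\vec{s'}}{K}}
  \defeq x\stubra{\subkeyo{\vec{m}\admbra{\subkeyo{\vec{s}}{J}, \subkeyo{l}{L}, \subkeyo{\vec{s'}}{K}}}{M}, \subkeyo{\vec{s}}{J}, \subkeyo{l}{L}, \subkeyo{\vec{s'}}{K}}
\end{align*}
and then apply $\sublock{i}{L}$; since $x \ndefeq i$, the unit's definition on a variable with several attached counits commutes into each remaining key and discards precisely the $\subkeyo{l}{L}$ slot, leaving
\begin{align*}
  x\stubra{\subkeyo{\vec{m}\admbra{\subkeyo{\vec{s}}{J}, \subkeyo{l}{L}, \subkeyo{\vec{s'}}{K}}\sublock{i}{L}}{M}, \subkeyo{\vec{s}\sublock{i}{L}}{J}, \subkeyo{\vec{s'}\sublock{i}{L}}{K}}.
\end{align*}
The induction hypothesis applied to each $m_j$ rewrites the innermost composite to $\vec{m}\admbra{\subkeyo{\vec{s}\sublock{i}{L}}{J}, \subkeyo{\vec{s'}\sublock{i}{L}}{K}}$, and one more unfolding of the counit definition in reverse collapses the whole expression to $x\stubra{\subkeyo{\vec{m}}{M}} \admbra{\subkeyo{\vec{s}\sublock{i}{L}}{J}, \subkeyo{\vec{s'}\sublock{i}{L}}{K}}$, as required.

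I do not expect conceptual obstacles: the difficulty is entirely bookkeeping. The delicate points are (i) verifying that the induction hypothesis is applicable to each $m_j$ in scope $G_1$, and (ii) confirming that when we descend under a binder or $\rformsym$-introduction into a subterm whose scope has grown to $G_1, H$ for some local telescope $H$, the hypothesis still applies with $G_1$ replaced by $G_1, H$ --- this is valid because $H$ is structurally inserted before the $G_2, i, \ctxlock{L}, G'$ block, so the shape of the operations is unchanged.
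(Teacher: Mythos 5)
Your variable case is computed correctly and follows the paper's intended one-line argument (``straightforward induction, key is that $i$ does not occur in $\judge$''). However, there is a genuine gap in your point (ii). You claim that when descending into a subterm in scope $G_1, H$, the induction hypothesis applies ``with $G_1$ replaced by $G_1, H$,'' because ``$H$ is structurally inserted before the $G_2, i, \ctxlock{L}, G'$ block.'' This is not what happens. When the admissible counit commutes past a binder, the fresh variable joins the \emph{trailing} telescope $\Gamma''$ of the counit rule (compare the $\Pi$-type case of the counit admissibility proof, which explicitly says ``with the telescope $\Gamma''$ extended''), and likewise the unit rule grows its trailing $\Gamma'$. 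In the actual composite, $H$ ends up \emph{after} $G_2, G'$ in the resulting context, not between $G_1$ and $G_2$, and the two placements give genuinely different raw terms. Concretely: if a bound variable $y$ were treated as part of $G_1$, a usage of $y$ in the body would pick up stuck keys $\vec{s}, l, \vec{s'}$ for every lock in $G_2, i, \ctxlock{L}, G'$ under the counit and would then carry spurious residual keys $\vec{s}\sublock{i}{L}, \vec{s'}\sublock{i}{L}$ after the unit; in the correct computation $y$ lies in $\Gamma''$, to the right of the insertion point, and acquires no new keys at all. So the modified induction hypothesis establishes an identity about a different pair of operations from the one actually produced when the counit and unit commute past the binder.

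The standard remedy is to strengthen the statement to carry a trailing telescope, taking $G_1, G'' \yields \judge \rawterm$ as the premise (as \cref{lem:counit-assoc-right} does) and stating the conclusion in the scope $G_1, G_2, G', G''\admbra{\subkeyo{\vec{s}}{J}, \subkeyo{l}{L}, \subkeyo{\vec{s'}}{K}}\sublock{i}{L}$. Binders and $\rformsym$-introductions then extend $G''$ rather than $G_1$, and your variable computation remains valid once split into the two subcases of whether $x$ lies in $G_1$ or in $G''$; the latter subcase is easy, since the counit then only acts on the keys already attached to $x$ and the $\lockn{L}$ key never appears.
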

\begin{proof}
  Straightforward induction, key is that $i$ does not occur in $\judge$.
\end{proof}

\begin{lemma}\label{lem:unit-unit-commute}
  \begin{mathpar}
    \inferrule*[fraction={-{\,-\,}-}]
    {G, i, \ctxlock{L}, G', k, \ctxlock{K}, G'' \yields \judge \rawterm}
    {G, G', G'' \yields \judge \sublock{i}{L}\sublock{k}{K} \defeq \judge \sublock{k}{K}\sublock{i}{L}}
  \end{mathpar}
\end{lemma}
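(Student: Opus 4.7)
The proof proceeds by structural induction on the raw term $\judge$. Every rule other than the variable rule commutes trivially with both unit operations, since $\sublock{i}{L}$ and $\sublock{k}{K}$ are defined to push homomorphically through every type- and term-former (including $\rform{J}$, $\rintro{J}{-}$, and $-(\rbind{j})$, which simply bind a fresh lock name different from $\lockn{L}$ and $\lockn{K}$). In each such case the desired equality reduces directly to the inductive hypotheses applied to the subterms.

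The only genuine work occurs in the variable case, which splits into three subcases. If the variable $x$ is distinct from both $i$ and $k$, then both units leave $x$ itself alone and simply propagate into the attached keys $\vec{s}$; each key term lives in the same context $G, i, \ctxlock{L}, G', k, \ctxlock{K}, G''_{\le x}$ (restricted to some prefix of the context), so the induction hypothesis applied termwise to the entries of $\vec{s}$ gives the result. If $x = k$, then the variable usage has the shape $k\stubra{\subkeyo{n}{K}, \subkeyo{\vec{o}}{O}}$ with $n$ the $\lockn{K}$-key and $\vec{o}$ the keys for locks in $G''$. Applying $\sublock{k}{K}$ first extracts $n\sublock{k}{K}$ and discards the remaining keys; applying $\sublock{i}{L}$ afterward yields $n\sublock{k}{K}\sublock{i}{L}$. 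In the reverse order, $\sublock{i}{L}$ does not match $k$ and instead propagates into every attached key, producing $k\stubra{\subkeyo{n\sublock{i}{L}}{K}, \subkeyo{\vec{o}\sublock{i}{L}}{O}}$; then $\sublock{k}{K}$ extracts the $\lockn{K}$-key to give $n\sublock{i}{L}\sublock{k}{K}$. The two sides agree by the induction hypothesis applied to $n$.

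The case $x = i$ is analogous but requires slightly more bookkeeping. The variable usage takes the form $i\stubra{\subkeyo{l}{L}, \subkeyo{\vec{m}}{M}, \subkeyo{n}{K}, \subkeyo{\vec{o}}{O}}$, where $l, \vec{m}$ are the keys for $\lockn{L}$ and the locks in $G'$, and $n, \vec{o}$ are the keys for $\lockn{K}$ and the locks in $G''$. Computing $\sublock{i}{L}$ first extracts $l\sublock{i}{L}$, after which $\sublock{k}{K}$ gives $l\sublock{i}{L}\sublock{k}{K}$. Computing $\sublock{k}{K}$ first propagates into all keys (yielding, in particular, $l\sublock{k}{K}$ as the new $\lockn{L}$-key), after which $\sublock{i}{L}$ extracts that $\lockn{L}$-key, producing $l\sublock{k}{K}\sublock{i}{L}$. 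Again these agree by the induction hypothesis on $l$.

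The main obstacle, as in the previous lemmas of this appendix, is simply the careful notational bookkeeping for the keys attached at a variable: one must confirm that the keys discarded by one unit operation are precisely those that the other unit operation does not interact with, so that applying the two units in either order exposes the same leftover key term, to which the induction hypothesis then applies.
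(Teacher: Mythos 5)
Your proof is correct and takes essentially the same approach as the paper's: structural induction on the raw term, with the variable cases for $i$ and $k$ as the only interesting ones, each reducing via the induction hypothesis on the extracted key term. The paper's proof writes out exactly these two variable cases in the same way you describe (extracting $t\sublock{i}{L}\sublock{k}{K} \defeq t\sublock{k}{K}\sublock{i}{L}$ for $i$, and $s\sublock{i}{L}\sublock{k}{K} \defeq s\sublock{k}{K}\sublock{i}{L}$ for $k$), leaving the generic-variable and non-variable cases implicit as you note.
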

\begin{proof}
  The interesting cases are, of course, the variable rules for $i$ and $k$:
  \begin{casesp}
  \item
    \begin{mathpar}
    \inferrule*
    {G, i, \ctxlock{L}, G', k, \ctxlock{K}, G'' \yields t \rawterm \text{ for } \lockn{L} \\\\
     G, i, \ctxlock{L}, G', k, \ctxlock{K}, G'' \yields m \rawterm \text{ for } \lockn{M} \in \locksin{G'} \\\\
     G, i, \ctxlock{L}, G', k, \ctxlock{K}, G'' \yields s \rawterm \text{ for } \lockn{K} \\\\
     G, i, \ctxlock{L}, G', k, \ctxlock{K}, G'' \yields n \rawterm \text{ for } \lockn{N} \in \locksin{G''}
    }
    {G, i, \ctxlock{L}, G', k, \ctxlock{K}, G'' \yields i\stubra{\subkeyo{t}{L}, \subkeyo{\vec{m}}{M}, \subkeyo{s}{K}, \subkeyo{\vec{n}}{N}} \rawterm}
  \end{mathpar}
  Then:
  \begin{align*}
    &i\stubra{\subkeyo{t}{L}, \subkeyo{\vec{m}}{M}, \subkeyo{s}{K}, \subkeyo{\vec{n}}{N}}\sublock{i}{L}\sublock{k}{K} \\
    &\defeq t\sublock{i}{L}\sublock{k}{K} \\
    &\defeq t\sublock{k}{K}\sublock{i}{L} \\
    &\defeq i\stubra{\subkeyo{t\sublock{k}{K}}{L}, \subkeyo{\vec{m}\sublock{k}{K}}{M}, \subkeyo{\vec{n}\sublock{k}{K}}{N}}\sublock{i}{L} \\
    &\defeq i\stubra{\subkeyo{t}{L}, \subkeyo{\vec{m}}{M}, \subkeyo{s}{K}, \subkeyo{\vec{n}}{N}}\sublock{k}{K}\sublock{i}{L}
  \end{align*}
  \item
    \begin{mathpar}
    \inferrule*
    {G, i, \ctxlock{L}, G', k, \ctxlock{K}, G'' \yields s \rawterm \text{ for } \lockn{K} \\\\
     G, i, \ctxlock{L}, G', k, \ctxlock{K}, G'' \yields n \rawterm \text{ for } \lockn{N} \in \locksin{G''}
    }
    {G, i, \ctxlock{L}, G', k, \ctxlock{K}, G'' \yields k\stubra{\subkeyo{s}{K}, \subkeyo{\vec{n}}{N}} \rawterm}
  \end{mathpar}
  Then:
  \begin{align*}
    &k\stubra{\subkeyo{s}{K}, \subkeyo{\vec{n}}{N}}\sublock{i}{L}\sublock{k}{K} \\
    &\defeq k\stubra{\subkeyo{s\sublock{i}{L}}{K}, \subkeyo{\vec{n}\sublock{i}{L}}{N}}\sublock{k}{K} \\
    &\defeq s\sublock{i}{L}\sublock{k}{K} \\
    &\defeq s\sublock{k}{K}\sublock{i}{L} \\
    &\defeq k\stubra{\subkeyo{s}{K}, \subkeyo{\vec{n}}{N}}\sublock{k}{K}\sublock{i}{L}
  \end{align*}  
  \end{casesp}
\end{proof}

\end{document}